\newtheorem{remark}{Remark}
\newtheorem{thm}{Theorem}[section]
\newtheorem{cor}[thm]{Corollary}
\newtheorem{lem}[thm]{Lemma}
\newtheorem{prp}[thm]{Proposition}
\theoremstyle{definition}
\newcommand{\scr}[1]{\mathscr #1}
\definecolor{wco}{rgb}{0.5,0.2,0.3}
\numberwithin{equation}{section} \theoremstyle{remark}
\newcommand{\ua}{\uparrow}
\newcommand{\RR}{{\mathbb R}}
\newcommand{\notiz}[1]{}
\title{Milstein schemes and antithetic {\color{black}multilevel Monte Carlo} sampling for delay McKean--Vlasov equations and interacting particle systems}   
\author{Jianhai Bao\footnote{Center for Applied Mathematics, Tianjin University, 300072 Tianjin, China, E-mail: jianhaibao13@gmail.com. The author is supported by NNSFC (11801406).} , Christoph Reisinger\footnote{Mathematical Institute, University of Oxford, Andrew Wiles Building, Woodstock Road, Oxford, OX2 6GG, UK, E-mail: christoph.reisinger@maths.ox.ac.uk.
} , Panpan Ren\footnote{Mathematics Department, Bonn University, 53115, Germany, E-mail: rppzoe@gmail.com.} , Wolfgang Stockinger\footnote{Mathematical Institute, University of Oxford, Andrew Wiles Building, Woodstock Road, Oxford, OX2 6GG, UK, E-mail: wolfgang.stockinger@maths.ox.ac.uk. The author is supported by an Upper Austrian Government fund.}
}
\date{}
\begin{document}
\allowdisplaybreaks
\def\R{\mathbb R}  \def\ff{\frac} \def\ss{\sqrt} \def\B{\mathbf
B}
\def\N{\mathbb N} \def\kk{\kappa} \def\m{{\bf m}}
\def\ee{\varepsilon}\def\ddd{D^*}
\def\dd{\delta} \def\DD{\Delta} \def\vv{\varepsilon} \def\rr{\rho}
\def\<{\langle} \def\>{\rangle} \def\GG{\Gamma} \def\gg{\gamma}
  \def\nn{\nabla} \def\pp{\partial} \def\E{\mathbb E}
\def\d{\text{\rm{d}}} \def\bb{\beta} \def\aa{\alpha} \def\D{\scr D}
  \def\si{\sigma} \def\ess{\text{\rm{ess}}}
\def\beg{\begin} \def\beq{\begin{equation}}  \def\F{\scr F}
\def\Ric{\text{\rm{Ric}}} \def\Hess{\text{\rm{Hess}}}
\def\e{\text{\rm{e}}} \def\ua{\underline a} \def\OO{\Omega}  \def\oo{\omega}
 \def\tt{\tilde} \def\Ric{\text{\rm{Ric}}}
\def\cut{\text{\rm{cut}}} \def\P{\mathbb P} \def\ifn{I_n(f^{\bigotimes n})}
\def\C{\scr C}   \def\G{\scr G}   \def\aaa{\mathbf{r}}     \def\r{r}
\def\gap{\text{\rm{gap}}} \def\prr{\pi_{{\bf m},\varrho}}  \def\r{\mathbf r}
\def\Z{\mathbb Z} \def\vrr{\varrho} \def\ll{\lambda}
\def\L{\scr L}\def\Tt{\tt} \def\TT{\tt}\def\II{\mathbb I}
\def\i{{\rm in}}\def\Sect{{\rm Sect}}  \def\H{\mathbb H}
\def\M{\scr M}\def\Q{\mathbb Q} \def\texto{\text{o}} \def\LL{\Lambda}
\def\Rank{{\rm Rank}} \def\B{\scr B} \def\i{{\rm i}} \def\HR{\hat{\R}^d}
\def\to{\rightarrow}\def\l{\ell}\def\iint{\int}
\def\EE{\scr E}\def\no{\nonumber}
\def\A{\scr A}\def\V{\mathbb V}\def\osc{{\rm osc}}
\def\BB{\scr B}\def\Ent{{\rm Ent}}
\def\U{\scr U}\def\8{\infty} \def\si{\sigma}\def\1{\lesssim}

\maketitle 
\begin{abstract}
\noindent
In this paper, we first derive Milstein schemes for an interacting particle system associated with point delay McKean--Vlasov stochastic differential equations (McKean--Vlasov SDEs), possibly with a drift term exhibiting super-linear growth in the state component.
We prove strong convergence of order one and moment stability, making use of techniques from variational calculus on the space of probability measures with finite second order moments.
Then, we introduce an antithetic multilevel Milstein scheme, which leads to optimal complexity estimators for expected functionals of solutions to delay McKean--Vlasov equations without the need to simulate L{\'e}vy areas.
\end{abstract}

\section{Introduction} 
A McKean--Vlasov equation (introduced by McKean \cite{MK}) for a $d$-dimensional process $X = (X(t))_{t \in [0,T]}$, for some given $T>0$, is an SDE where the underlying coefficients depend on the current state $X(t)$ and, additionally, on the law of $X(t)$, i.e., 
\begin{equation}\label{McKeanLimit}
    \mathrm{d}X(t) =b(X(t), \mathcal{L}_{X(t)}) \, \mathrm{d}t + \sigma(X(t), \mathcal{L}_{X(t)}) \, \mathrm{d}W(t), \quad X_0 = \xi,
\end{equation}
where $W=(W(t))_{t \in [0,T]}$ is an $m$-dimensional standard Brownian motion, $ \mathcal{L}_{X(t)}$ denotes the marginal law of the process $X$ at time $t \in [0,T]$ and $\xi$ is an $\mathbb{R}^d$-valued random variable. The existence and uniqueness theory for strong solutions of McKean--Vlasov SDEs with coefficients of linear growth and Lipschitz type conditions (with respect to the state and the measure) is well-established (see, e.g., \cite{AS,MEL}). For further existence and uniqueness results on weak and strong solutions of McKean--Vlasov SDEs, we refer to \cite{RAY2,BBP,HSS,MVA,LACK} and references cited therein. 
It is also known from \cite{RST} that McKean--Vlasov SDEs with super-linear growth of the drift term with respect to the state variable
 admit a unique strong solution provided a one-sided Lipschitz condition holds.
Existence and uniqueness for path-dependent McKean--Vlasov SDEs (where the drift has super-linear growth) was studied in \cite{WANG}.

An illustrative example for path-dependent McKean--Vlasov SDEs, which will be the focus of this work, is the one-point delay McKean--Vlasov SDE 
\begin{equation*}
\mathrm{d}X(t) = b(X(t),X(t-\tau), \mathcal{L}_{X(t)},
\mathcal{L}_{X(t-\tau)}) \, \mathrm{d}t + \sigma(X(t),X(t-\tau),
\mathcal{L}_{X(t)}, \mathcal{L}_{X(t-\tau)}) \, \mathrm{d}W(t),
\end{equation*}
with $\tau >0$ a given {\color{black}deterministic delay parameter} and the initial datum $\xi$ is assumed to be a continuous ({\color{black}random}) function $\xi:\Omega \times [-\tau,0]\to\R^d$.

Such (McKean--Vlasov) SDEs with a time delay are relevant for applications, for instance, in medicine, biology, and finance, where the effect of some actions or causes is not immediately visible or significant. For example, infectious diseases have a certain incubation time, so that the currently observed and infectious cases were themselves infected at a prior time. Furthermore, neural networks can be modelled using delay mean-field equations 
to account for the delay in the transmission of signals amongst large numbers of neurons; 
see \cite{JONTO} for details. 
In finance, a collapse of a bank or an investment might not immediately have an impact on the entire system, but only after some time lag.
Delay McKean--Vlasov SDEs are also employed in computational finance to describe path-dependent volatility models and the associated calibration problems \cite{JGU, LBG}. The latter models are characterised solely by a diffusion coefficient (i.e., zero drift) and, compared to standard local (stochastic) volatility models, can capture prominent historical patterns of volatility.

In addition, several mean-field control problems described by stochastic delayed
differential equations of McKean--Vlasov type can be found in the literature; see, e.g., \cite{JPFZZ} and references cited therein.
Further background on the theory and applications of delay equations is found, e.g., in \cite{JKHVSM,VKAM}.
For examples of McKean--Vlasov SDEs with non-globally Lipschitz drift, without delay, we refer the reader to \cite{stock} and the references cited therein.


The simulation of McKean--Vlasov SDEs typically involves two steps: First, at each time $t$, the true measure $ \mathcal{L}_{X(t)}$ is approximated by the empirical measure 
\begin{equation*}
 \mu_t^{X,N}(\mathrm{d}x) := \frac{1}{N}\sum_{j=1}^{N} \delta_{X^{j,N}(t)}(\mathrm{d}x),
\end{equation*}    
where $\delta_{x}$ denotes the Dirac measure at point $x$ and, for $\mathbb{S}_N:= \lbrace 1, \ldots, N \rbrace$, $(X^{i,N})_{i \in \mathbb{S}_N}$  (a so-called 
\textit{interacting particle system}) is the solution to the $\mathbb{R}^{dN}$-dimensional SDE
\begin{equation}\label{IPSD}
\mathrm{d}X^{i,N}(t) = b(X^{i,N}(t),X^{i,N}(t-\tau), \mu_t^{X,N}, \mu_{t-\tau}^{X,N}) \, \mathrm{d}t + \sigma(X^{i,N}(t),X^{i,N}(t-\tau), \mu_t^{X,N}, \mu_{t-\tau}^{X,N}) \, \mathrm{d}W^{i}(t), 
\end{equation}
{\color{black}with $X_{0}^{i,N} = \xi^{i}$.
Here, $W^{i}$ and $\xi^{i}$, for $i \in \mathbb{S}_N$, are independent Brownian motions (also independent of $W$) and i.~i.~d.\ copies of $\xi$, respectively}.
We will be able to refer to \cite{CGR} for an approximation result, referred to as \textit{propagation of chaos}, in the delay setting.
In a next step, one typically needs to introduce a reasonable time-stepping method to discretise the particle system $(X^{i,N})_{i \in \mathbb{S}_N}$ over some finite time horizon $[0,T]$. This second step is the focus of the present work.


The literature on higher-order numerical schemes (i.e., beyond the basic Euler--Maruyama method of order 1/2) for classical delay SDEs is sparse and restricted to Lipschitz continuous coefficients. In \cite{HUMY}, the strong convergence of a Milstein scheme for point-delay SDEs 
was proven using an It\^{o} formula for so-called tame functions and techniques from anticipative calculus. 
For the analysis of a Milstein scheme without relying on methods from anticipative calculus, we refer to \cite{KPS}. 
To the best of our knowledge, there is no published Milstein scheme for classical point-delay SDEs where the coefficients have super-linear growth.

It is also well-documented that standard schemes, e.g., of Euler-type with uniform step-size, are not suitable either for standard SDEs (see \cite{HJK2}) or (non-delay) McKean--Vlasov SDEs
(see \cite{RES, stock}) with super-linear growth in the drift, i.e., the moments of the discretised process explode as the mesh-size tends to zero (even though a unique solution with bounded moments {\color{black}to the non-discretised process exists}).
For remedies in these cases using adaptive, truncated, {\color{black}implicit} or tamed schemes see the references given in the above mentioned works.

For tamed Euler--Maruyama schemes applied to certain delay SDEs, we refer to \cite{KUMARSA} and the references cited therein. An Euler--Maruyama type scheme for the approximation of delay McKean--Vlasov SDEs was introduced in \cite{RENWU}, but
without an implementable (particle) approximation of the true measure. We are not aware of any published higher-order time-stepping schemes in this context, not even for globally Lipschitz continuous coefficients.
In this work, we provide a strong convergence analysis of first order time-stepping schemes for delay McKean--Vlasov equations, possibly with super-linear drift.

A first contribution of the current article is the following:
\begin{itemize}
\item
We derive a Milstein scheme for point delay McKean--Vlasov SDEs, i.e., path-dependent equations where the coefficients depend on the values of the process at a finite number of time points,
and its law at these points (but not on the entire path). 
\item
{\color{black}
We prove first order strong convergence for such problems,
where the drift is allowed to grow super-linearly in the current state but does not depend on delayed values (Theorem \ref{THDelay:THDelay4} for a `tamed' scheme). We will also require Lipschitz dependence of all coefficients on the measure.} \footnote{We refer the reader to Remark \ref{rem:anticip} for a discussion on technical difficulties which prevented us from considering delayed components in the drift term for the super-linear case.}
\end{itemize}
We hereby extend ideas from \cite{stock,K} on tamed Milstein schemes for non-delay equations.
To derive our main result, we make use of the Lions derivative of functionals acting on $\mathcal{P}_2(\RR^d)$.
As we do not employ an It\^{o} formula for the analysis of the scheme, we only need to require the coefficients to be once continuously differentiable in all variables (state and measure component).

The second, and main, novelty of the article concerns efficient estimators of expected functionals of the solution to
McKean--Vlasov SDEs with delay, on the basis of the aforementioned Milstein schemes.
The crucial computational difficulty for such equations is that already in the one-dimensional case the simulation of iterated stochastic integrals (see \cite{KP}) is required,
as a double stochastic integral of an earlier segment of the Wiener path with respect to the `current' Wiener path appears in the delay setting. Hence, for a direct application of the scheme, appropriate approximation techniques for such integrals (e.g., as proposed in \cite{WSON}) need to be used and add significant computational complexity.
Therefore, for the estimation of expected (path) functionals, we propose an antithetic approach that avoids the simulation of these iterated integrals, but still allows us to obtain higher order convergence of the 
correction terms within a multilevel Monte Carlo estimator (MLMC; see, \cite{MG,MB2}).
Consequently, we recover optimal complexity of the MLMC sampling scheme.
Here, we adapt ideas from \cite{MGLS} for classical multidimensional SDEs
without delay or mean-field interaction. 
Our results are new also for delay equations without mean-field interaction.

A natural MLMC estimator for $\mathbb{E}[P(X(T))]$, where $P$ is a regular enough function (see Corollary \ref{cor2}) and $X$ is the solution to the (delay) McKean--Vlasov SDE, 
can be constructed as  follows (see \cite{AT}):

First, for a given integer $L$ and $l \in \{0,1,\ldots, L \}$, we  consider  numerical  
approximations $(Y^{j,N,l}(T))_{j \in \mathbb{S}_N}$ to the particle system
at time $T$, obtained by a time-stepping scheme with mesh-size $\delta_l>0$.
Then, we introduce the quantity  
\begin{equation}
\label{level_l}
\phi_N^{l}:= \frac{1}{N} \sum_{j=1}^{N} P\left( Y^{j,N,l}(T) \right),
\end{equation}  
on the time discretisation level $l$.
In the sequel, we fix the number of particles $N$ across all levels and construct independent realisations of $\phi_N^{l}$. 

The set of random variables needed to generate the $k$-th realisation of $\phi_N^{l}$ is denoted by $\boldsymbol{\omega}^{(l,k)}_{1:N}$, $k \in \lbrace 1, \ldots, K_l \rbrace$ for some given integer $K_l$; this is the set of $N$ independent copies $(W^{i}, \xi^{i})_{i \in \mathbb{S}_N}$. 
For $L \in \mathbb{N}$ levels, a MLMC estimator for $\mathbb{E}[P(X(T))]$ can be written as
\begin{equation}
\label{eqn:MLML}
\sum_{l=0}^{L} \frac{1}{K_l} \sum_{k=1}^{K_l} \left(\phi_{N}^{l} - \varphi_{N}^{l-1} \right) \left(  \boldsymbol{\omega}^{(l,k)}_{1:N}\right),
\end{equation}    
where $\varphi_{N}^{l-1}$ has to be chosen appropriately. In particular, if
$\varphi^{{-1}}_{N}=0$ and $\mathbb{E}[\phi_{N}^{l-1} ] = \mathbb{E}[\varphi_{N}^{l-1}]$, then a telescoping property applies, which ensures the bias of the MLMC estimator
for given $L$ is the same as that of the standard MC estimator $\phi_N^{L}$ with mesh-size $\delta_L$.
The key to reducing the computational complexity of the MLMC estimator is to introduce a strong correlation between $\varphi$ and $\phi$ to reduce the variance.
A simple choice to achieve this is 
{\color{black}$\varphi_{N}^{l-1} (\boldsymbol{\omega}^{(l,k)}_{1:N}) = \phi_{N}^{l-1}(\boldsymbol{\omega}^{(l,k)}_{1:N})$},
i.e., the same Brownian motions and initial data are used for the two particle systems (with the same $N$) on the fine grid and on the coarse grid.

In the present paper,
\begin{itemize}
\item
we show (in Theorem \ref{TheoremAntithetic}) that a certain antithetic Milstein scheme for $\phi_{N}^{l}$ gives strong order one in $\delta_l$ for the multilevel corrections
$\phi_{N}^{l} - \varphi_{N}^{l-1}$, without the need to compute L{\'e}vy areas,
\end{itemize}
 and hence at the same cost per path as the lower-order Euler--Maruyama scheme.
For simplicity, we restrict the analysis to the case where the drift coefficient depends only on the state and the measure, and the diffusion coefficient only on the state
and a single delayed value, but the results extend straightforwardly to measure dependent volatility. 
In addition, it is assumed $d=m=1$ for notational convenience.

Optimal complexity results for multilevel simulation of interacting particle systems are found in \cite{AT}, which are based on the assumption of
a multiplicative bound for the variance of the correction terms, 
\begin{align}\label{eq:VarAssump}
\mathbb{V} \left[ \phi_{N}^{l} - \varphi_{N}^{l-1} \right] \1 \frac{\delta_l^s}{N}.
\end{align}
Here, $s/2$ corresponds to the strong order of the time-stepping scheme.
Although the assumption (\ref{eq:VarAssump}) seems reasonable, it is an open problem to prove such multiplicative estimates (in $N$ and $\delta_l^s$).
The analysis in \cite{AT} subsequently reveals that higher order time-stepping schemes (e.g., $s=2$) can reduce the overall computational complexity.

For the following discussion, we assume that 
\eqref{eq:VarAssump} holds with $s=2$, i.e., in addition to the first order convergence in $\delta_l$, which we prove, we assume the factor $1/N$.
We also assume that
the complexity of computing all interaction terms for $N$ particles due to the presence of the empirical measure is $\mathcal{O}(N^{p+1})$ per time-step, where $p=0,1$.
To be more explicit, for $p=0$, the cost is of identical order to standard systems of SDEs, which is the case, for instance, if the coefficients involve
the expectation of $X(t)$, which can be approximated by the empirical average over all particles in a single computation.
The case $p=1$ would be relevant, e.g., if different expectations needed to be computed for each particle.

Then, \cite[Theorem 3.1]{AT} allows us to conclude that the work complexity required to compute $\mathbb{E}[P(X(T))]$ with RMSE of order $\vv>0$ is $\mathcal{O}(\vv^{-2-p})$ when our antithetic Milstein scheme is used as a time-stepping method. For a tamed Euler--Maruyama scheme, the multilevel scheme outlined above would give a work complexity of order $\mathcal{O}\left(\vv^{-2-p} \log^2(\vv) \right)$.
A standard MC estimator based on $K=\mathcal{O}\left(\vv^{-1} \right)$ samples of the entire particle system would have complexity $\mathcal{O}(\vv^{-3-p})$; see \cite{AT,LSAT}.

As noted in \cite{AT} and further analysed in \cite{LSAT}, one can additionally vary the number of particles with the levels (see also earlier \cite{BHR} for the case with interaction  through common noise). If 
we want to work with two particle systems of size $N_l$ and $N_{l-1}$ on level $l$, where $N_l = \beta N_{l-1}$ for some integer $\beta$ (e.g., $\beta=2$), we split the set of $N_l$ underlying Brownian motions into $\beta$ sets of Brownian motions and simulate independently $\beta$ particle systems each of size $N_{l-1}$. Hence, we can define
\begin{equation*}
{\color{black}\varphi^{l-1}_{N_{l-1}}(\boldsymbol{\omega}_{1:N_l}^{(l,k)}): = \frac{1}{\beta} \sum_{i=1}^{\beta} \phi^{l-1}_{N_{l-1}}(\boldsymbol{\omega}^{(l,k)}_{(i-1)N_{l-1}+1:iN_{l-1})}).}
\end{equation*} 

Using this definition in conjunction with \eqref{eqn:MLML} gives then a MLMC estimator for $\mathbb{E}[P(X(T))]$ where both the number of particles and number of time-steps vary with the level.
In \cite[Theorem 6.2]{LSAT},
for the setting without delay and with a constant diffusion coefficient (such that a standard Euler--Maruyama scheme already yields strong convergence of order one; see \cite[Lemma 6.1]{LSAT}), an additive error bound for the variance of the correction terms, of order one with respect to both $M$ (number of time-steps) and $N$, was proven.

Our main result on the strong convergence of the multilevel correction terms using the antithetic Milstein scheme for the delay case complements \cite[Theorem 6.2]{LSAT}
in the sense that we prove order two convergence of the variance of the correction terms in the setting with non-constant diffusion term without increasing the computational complexity compared to a standard (tamed) Euler--Maruyama scheme. We achieve this through an antithetic approach by avoiding the simulation of iterated stochastic integrals required for the direct Milstein scheme even for $d=m=1$ in the delay case.

The analysis in \cite[Theorem 6.3]{LSAT} then reveals that the complexity of such a multilevel scheme
is at most $\mathcal{O}(\vv^{-2-p})$ for a RMSE of order $\vv$. This coincides with the result of the earlier discussion, where the number of particles was fixed across all time discretisation levels. However, this holds now also without the assumption of a multiplicative error bound as in \eqref{eq:VarAssump}. This means that if both, the number of time-steps and number of particles, are varied with the level, then an additive bound for the variance decay yields the same computational complexity as a MLMC scheme with a multiplicative rate (\ref{eq:VarAssump}) but where only the number of time-steps are varied across the levels. Also, according to Table 1 in \cite{AT}, if $s=2$ and the computation of the interaction terms is of order $N^{2}$, then both MLMC estimators, i.e., where either only the number of time-steps or where both the number of particles and the number of time-steps vary with the levels, yield the same computational complexity. However, if the complexity in terms of $N$ is only of order one and $s=2$, then Table 1 in \cite{AT} reveals that it is more efficient to keep the number of particles constant across all levels; see Section \ref{SEC:TR} for details.  

The remainder of this article is organised as follows: In Section \ref{Section:Sec2}, we introduce the precise class of delay equations considered here and state the first order convergence result for the {\color{black}tamed} Milstein schemes. 
Section \ref{Section:SectionAnti} is concerned with the analysis of the antithetic approach. {\color{black}This allows us to construct a numerical scheme without simulating the iterated stochastic integrals, which still achieves convergence order one in a MLMC framework.}
Section \ref{Section:Sec4} illustrates the numerical performance of the proposed time-stepping schemes. The proofs of the main convergence results are deferred to Sections \ref{Section:Sec3} and \ref{panti}.  \\ \\

\noindent
\textbf{Preliminaries:} \\ \\
\noindent
We end this section by introducing some notations and concepts that will be needed throughout this article.
Let $T >0$ be a given terminal time and $(\Omega,\mathcal{F},(\mathcal{F}_t)_{t \in [0,T]},\mathbb{P})$ will denote a complete filtered probability space satisfying the usual assumptions, where $(\Omega,\mathcal{F},\mathbb{P})$ is assumed to be atomless.

Here, $(\R^d,\<\cdot,\cdot\>,|\cdot|)$ will represent the $d$-dimensional
Euclidean space and $\R^d\otimes\R^m$ be the collection of all $d\times m$-matrices. 

In addition, we use $\mathcal{P}(\R^d)$ to denote the family of all probability
measures on $\R^d$ and define the subset of probability measures with finite second order moment by
\begin{equation*}
\mathcal{P}_2(\R^d):= \Big \{ \mu\in\mathcal
{P}(\R^d): \ \ \mu(|\cdot|^2)= \int_{\R^d}|x|^2\mu(\d x)<\8 \Big \}.
\end{equation*}
For matrices, we will use the standard Hilbert-Schmidt norm denoted by $\| \cdot \|$.  

As metric on the space $\mathcal{P}_2(\R^d)$, we use the Wasserstein distance. For $\mu,\nu\in\mathcal{P}_2(\R^d)$, the Wasserstein distance between $\mu$ and $\nu$ is defined as
\begin{equation*}
\mathcal{W}_2(\mu,\nu) := \inf_{\pi\in\mathcal
{C}(\mu,\nu)} \left( \int_{\R^d\times \R^d}|x-y|^2\pi(\d x,\d y) \right)^{1/2},
\end{equation*}
where $\mathcal {C}(\mu,\nu)$ is the set of all couplings of $\mu$ and
$\nu$, i.e., $\pi\in\mathcal {C}(\mu,\nu)$ if and only if
$\pi(\cdot,\mathbb{R}^d)=\mu(\cdot)$ and $\pi(\mathbb{R}^d,\cdot)=\nu(\cdot)$. 

We briefly introduce the $L$-derivative of a functional $f: \mathcal{P}_2(\mathbb{R}^d) \to \mathbb{R}$, as it will appear in the proofs presented in the main section. For further information on this concept, we refer to \cite{PLI} or \cite{BLPR,HSS}. Here, we follow the exposition of \cite{CD}. We will associate to the function $f$ a lifted function $\tilde{f}$ by $\tilde{f}(X)=f(\mathcal{L}_X)$, where $\mathcal{L}_X$ is the law of $X$, for $X \in L_2(\Omega, \mathcal{F},\mathbb{P};\mathbb{R}^d)$.

A function $f$ on $\mathcal{P}_2(\mathbb{R}^d)$ is said to be $L$-differentiable at $\mu_0 \in \mathcal{P}_2(\mathbb{R}^d)$ if there exists a random variable $X_0$ with law $\mu_0$, such that the lifted function $\tilde{f}$ is Fr\'{e}chet differentiable at $X_0$. 

Now, the Riesz representation theorem implies that there is a ($\mathbb{P}$-a.s.) unique $\Phi \in L_2(\Omega, \mathcal{F},\mathbb{P};\mathbb{R}^d)$ with
\begin{equation*}
\tilde{f}(X) = \tilde{f}(X_0) + \langle \Phi,  X-X_0 \rangle_{L_2}  + o(\| X-X_0\|_{L_2}), \text{ as } \| X-X_0\|_{L_2} \to 0,
\end{equation*}
with the standard inner product and norm on $L_2(\Omega, \mathcal{F},\mathbb{P};\mathbb{R}^d)$. If $f$ is $L$-differentiable for all $\mu_0 \in \mathcal{P}_2(\mathbb{R}^d)$, then we say that $f$ is $L$-differentiable.
 
It is known (see, e.g., \cite[Proposition 5.25]{CD}) that there exists a Borel measurable function ${\color{black}\kappa}: \mathbb{R}^d \to \mathbb{R}^d$, such that $\Phi = {\color{black}\kappa}(X_0)$ almost surely, and hence
\begin{equation*}
f(\mathcal{L}_X) = f(\mathcal{L}_{X_0}) + \mathbb{E}\left\langle \kappa(X_0), X -X_0 \right \rangle +o(\| X-X_0\|_{L_2}).
\end{equation*} 
{\color{black}Note that $\Phi$ can be expressed as a function ($\kappa$) of any random variable $X_0$ with distribution $\mu_0$, giving a meaning to the $L$-derivative independently of the lifting chosen to construct it, see \cite[Remark 5.26]{CD}.}  
We define $\partial_{\mu}f(\mathcal{L}_{X_0})(y):={\color{black}\kappa}(y)$, $y \in \mathbb{R}^d$, as the $L$-derivative of $f$ at $\mu_0$. For a vector-valued (or matrix-valued) function $f$, these definitions have to be understood componentwise.

For some real $\tau>0$, let $\C=C([-\tau,0];\R^d)$ be the set of all
continuous functions $f:[-\tau,0]\to\R^d$. Further, for $\eta \in\C$ and $\theta\in[-\tau,0]$, let
$\Pi_{\theta}:\C\to\R^d$ be the projection operator, i.e., $\Pi_\theta(\eta)=\eta(\theta)$. For
$f\in C([-\tau,T];\R^d)$ and for $t \in [0,T]$, let $f_t\in\C$ be defined by
$f_t(\theta)=f(t+\theta),\theta\in[-\tau,0].$ In the literature (see e.g., \cite{XM}),
$(f_t)_{t\ge0}$ is called the segment process associated with
$(f(t))_{t  \geq -\tau}$. We will also need the norms {\color{black} $\| f \|_{\infty}:=\sup_{v \in [-\tau,0]} |f(v)|$} for $f \in \C$ and $\|X\|_{\8,t}:= \sup_{s \in [0,t]} |X(s)|$, for $t \geq 0$ ({\color{black}and zero otherwise}) and a process $X$ precisely defined below. 
\section{Delay McKean--Vlasov SDEs and Milstein schemes}\label{Section:Sec2}

\subsection{Problem formulation}
In this section, we consider the following SDE  for $t \in [0,T]$
\begin{equation}\label{E00}
\d X(t)=b(\Pi(X_t),\mathcal{L}_{\Pi(X_t)}) \, \d
t+\si(\Pi(X_t),\mathcal{L}_{\Pi(X_t)}) \, \d W(t), \quad X_0=\xi\in L^{0}_p(\C),
\end{equation}
{\color{black}where $L^{0}_p(\C)$ is the space $\mathcal{F}_0$-measurable $\C$-valued random variables with $\mathbb{E} [\| \xi \|_{\infty}^p] < \infty $, for a given $p \geq 2$. Further, for {\color{black} $s_1 > s_2 > \ldots > s_k \in[-\tau,0]$}, {\color{black}with $\tau <T$} and a $\C$-valued random
variable $\chi$}
$$\Pi(\chi):=(\Pi_{s_1}(\chi),\ldots,\Pi_{s_k}(\chi)),~~~
 \mathcal{L}_{\Pi(\chi)}:=(\mathcal{L}_{\Pi_{s_1}(\chi)
},\ldots,\mathcal{L}_{\Pi_{s_k}(\chi)}),
$$ $b:\R^{dk}\times
\mathcal{P}_2(\R^{d})^k \to\R^d$, $\si:\R^{dk}\times
\mathcal{P}_2(\R^{d})^k \to\R^{d} \otimes \R^{m}$ are given measurable functions, with $\mathcal{P}_2(\R^{d})^k := \mathcal{P}_2(\R^{d}) \times  \ldots \times \mathcal{P}_2(\R^{d})$ ($k$-times), and $W=(W(t))_{t \in [0,T]}$ is an
$m$-dimensional Brownian motion on some complete filtered atomless probability space
$(\OO,\mathcal{F},(\mathcal{F}_t)_{t\in [0,T]},\P)$, where $(\mathcal{F}_t)_{t\in [0,T]}$ is the natural filtration of $W$ augmented with an independent $\sigma$-algebra $\mathcal{F}_0$. In what follows, we will set $s_1=0$ and $s_k = -\tau$. In the sequel, we refer to these equations as point-delay McKean--Vlasov SDEs. In \cite{HUMY} the same type of delays was considered for classical SDEs.

In the following, we impose several conditions on the drift and diffusion coefficient which guarantee well-posedness of the considered point-delay McKean--Vlasov SDE.

To do so, we introduce the notation $\boldsymbol{y}_{k} := (y_1, \ldots, y_{k}), \tilde{\boldsymbol{y}}_{k} := (\tilde{y}_1, \ldots, \tilde{y}_{k}), \bar{\boldsymbol{y}}_{k} := (y'_1, y_2, \ldots, y_{k})$, for $y'_1, y_i, \tilde{y}_i \in \mathbb{R}^d$, and $\boldsymbol{\mu}_{k} := (\mu_1, \ldots, \mu_{k})$, $\tilde{\boldsymbol{\mu}}_{k} := (\tilde{\mu}_1, \ldots, \tilde{\mu}_{k})$, for $\tilde{\mu}_i, \mu_i \in \mathcal{P}_{2}(\mathbb{R}^d)$. Further, we set $\boldsymbol{0}_{k}:=(\boldsymbol{0}, \ldots, \boldsymbol{0})$, ($k$ components), where $\boldsymbol{0} \in \mathbb{R}^d$.

For the drift term $b$, we assume that:
\begin{enumerate}
\item[({\bf AD$_b^1$})]
There exist constants {\color{black}$L_b^{1}, q_1 \geq 0$} such that
\begin{align*}
& \<y_1-y'_1,b(\boldsymbol{y}_{k},\boldsymbol{\mu}_{k})-b(\bar{\boldsymbol{y}}_{k},\boldsymbol{\mu}_{k})\>\le L_b^1|y_1-y'_1|^2,  \\
& |b(\boldsymbol{y}_{k},\boldsymbol{\mu}_{k})- b(\tilde{\boldsymbol{y}}_{k},\boldsymbol{\mu}_{k})|\le L_b^1  \sum_{i=1}^{k} |y_i-\tilde{y}_i |(1+|y_i|^{q_1}+|\tilde{y}_i|^{q_1}),  \\
& |b({\bf0}_{k},\boldsymbol{\mu}_{k})- b({\bf0}_{k},\tilde{\boldsymbol{\mu}}_{k})|\le L_b^1 \sum_{i=1}^{k} \mathcal{W}_2(\mu_i,\tt \mu_i),
\end{align*}
{\color{black}for all $\bar{\boldsymbol{y}}_{k}, \boldsymbol{y}_{k}, \tilde{\boldsymbol{y}}_{k} \in \mathbb{R}^{dk}$, and
$ \boldsymbol{\mu}_{k}, \tilde{\boldsymbol{\mu}}_{k} \in \mathcal{P}_2(\R^{d})^{k}$.}
\end{enumerate}
Concerning the diffusion coefficient $\si,$ we assume:
\begin{enumerate}
\item[({\bf AD$_\sigma^1$})] 
There exist constants {\color{black}$L_\sigma^1,q_2 \geq 0$} such that
\begin{align*}
&\|\si(\boldsymbol{y}_{k},\boldsymbol{\mu}_{k})-\si(\tilde{\boldsymbol{y}}_{k},\boldsymbol{\mu}_{k})\| \le L_\sigma^1  \Big \{|y_1-\tilde{y}_1|+ \sum_{i=2}^{k} |y_i-\tilde{y}_i|(1+|y_i|^{q_2}+|\tilde{y}_i|^{q_2}) \Big \}, \\
& \|\si({\bf0}_{k},\boldsymbol{\mu}_{k}) - \si({\bf0}_{k},\tilde{\boldsymbol{\mu}}_{k})\|\le L_\si^1 \sum_{i=1}^{k} \mathcal{W}_2(\mu_i,\tt\mu_i),
\end{align*}
{\color{black}for all $\boldsymbol{y}_{k}, \tilde{\boldsymbol{y}}_{k} \in \mathbb{R}^{dk}$, and
$\boldsymbol{\mu}_{k}, \tilde{\boldsymbol{\mu}}_{k} \in \mathcal{P}_{2}(\R^{d})^{k}$.}
\end{enumerate}
We note that under these conditions, the SDE (\ref{E00}) has a unique strong solution. To present an inductive argument for this assertion, we assume, for ease of notation that there is only one delay, i.e., $k=2$ and $s_1=0, s_2=-\tau$. On the interval $[0, \tau]$, SDE (\ref{E00}) can be written as a non-delay SDE with random coefficients
\begin{equation*}
\d X(t)=b(X(t), \xi(t-\tau),\mathcal{L}_{X(t)}, \mathcal{L}_{\xi(t-\tau)}) \, \d
t+\si(X(t),\xi(t-\tau), \mathcal{L}_{X(t)},\mathcal{L}_{\xi(t-\tau)}) \, \d W(t), 
\end{equation*} 
with initial value $X(0)= \xi(0)$. This SDE has a unique strong solution under ({\bf AD$_b^1$}) and ({\bf  AD$_\sigma^1$}), see, e.g., \cite{RST}. Requiring the initial data to be in $L^{0}_{p(1+q)}(\C)$, where $q = q_1 \lor q_2$, the solution also has finite moments up to order $p$. A similar argument can be employed on the interval $[\tau,2\tau]$, and so forth up to the final time $T>0$. 
\begin{prp}\label{Prop:MomentDelayMcKean}
Let $p \geq 2$ be given. Let Assumptions ({\bf AD$_b^1$}) and ({\bf  AD$_\sigma^1$}) hold and suppose $X_0=\xi \in L^{0}_{k_1}(\C)$, where $k_1 \geq p(1+q)$, with $q = q_1 \lor q_2$, will be defined in the proof. Then, there exists a constant $C > 0$ such that
\begin{equation*}
{\color{black}\mathbb{E} \left[\sup_{- \tau \leq t \leq T} |X_t|^p \right]  \leq C(1+ \mathbb{E}[\| \xi \|^{k_1}_{\infty}]).}
\end{equation*}
\end{prp}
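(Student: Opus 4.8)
\emph{Plan.} The natural approach is an induction over the successive delay intervals $[0,\tau],[\tau,2\tau],\dots$, combined with the standard moment estimate for monotone (one-sided Lipschitz) SDEs via It\^o's formula applied to $|X(t)|^p$. As in the well-posedness discussion preceding the statement, I would reduce to the case of a single delay, $k=2$ with $s_1=0,\ s_2=-\tau$; the general case follows verbatim with heavier notation. On the first interval $[0,\tau]$ the delayed argument equals the known initial segment, $X(t-\tau)=\xi(t-\tau)$, so \eqref{E00} is a non-delay McKean--Vlasov SDE whose coefficients carry an explicit (random, $\F_0$-measurable) time-dependent input controlled by $\|\xi\|_{\infty}$. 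More generally, on $[j\tau,(j+1)\tau]$ the delayed argument is the already-constructed solution restricted to $[(j-1)\tau,j\tau]$, which by the inductive hypothesis has finite moments of a suitable order. Thus the whole argument rests on a single-interval estimate, which I would then iterate $\lceil T/\tau\rceil$ times.

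\emph{Single-interval estimate.} On a generic interval, write $Z(t):=X(t-\tau)$ for the (known) delayed process and apply It\^o to $|X(t)|^p$. The drift contributes $p|X(t)|^{p-2}\langle X(t),b(X(t),Z(t),\mathcal L_{X(t)},\mathcal L_{Z(t)})\rangle$. The crucial step is to absorb the super-linear growth in the current state using the one-sided Lipschitz bound (AD$_b^1$)(1): taking $y_1'=\mathbf 0$ there gives
\[
\langle X(t),b(X(t),Z(t),\cdot)\rangle\le L_b^1|X(t)|^2+|X(t)|\,\bigl|b(\mathbf 0,Z(t),\mathcal L_{X(t)},\mathcal L_{Z(t)})\bigr|,
\]
and the remaining term no longer involves the current state super-linearly; by (2) and (3) it is bounded by $C|X(t)|\bigl(1+|Z(t)|^{1+q_1}+\mathcal W_2(\mathcal L_{X(t)},\delta_{\mathbf 0})+\mathcal W_2(\mathcal L_{Z(t)},\delta_{\mathbf 0})\bigr)$. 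For the diffusion, (AD$_\sigma^1$)(1)--(2) give linear growth in the current state and at most polynomial ($1+q_2$) growth in the delayed state, so $\|\si\|^2\le C(1+|X(t)|^2+|Z(t)|^{2(1+q_2)}+\mathcal W_2(\mathcal L_{X(t)},\delta_{\mathbf 0})^2+\cdots)$. Using Young's inequality I would split each mixed term into a part $\varepsilon|X(t)|^p$ (reabsorbed on the left after taking the supremum) and a part controlled by $|Z(t)|^{p(1+q)}$ with $q=q_1\lor q_2$; note $\mathcal W_2(\mathcal L_{X(t)},\delta_{\mathbf 0})^p=(\E|X(t)|^2)^{p/2}\le \E\|X\|_{\infty,t}^p$, which feeds back into a Gronwall functional.

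\emph{Closing and bookkeeping of the exponent.} Taking the supremum over the (translated) interval $[0,t]$ and estimating the stochastic integral by the Burkholder--Davis--Gundy inequality (again splitting off a reabsorbable $\varepsilon\,\E\|X\|_{\infty,\cdot}^p$ term), one arrives at an integral inequality of the form $\E\|X\|_{\infty,t}^p\le C\bigl(1+\E\sup|Z|^{p(1+q)}\bigr)+C\int_0^t \E\|X\|_{\infty,s}^p\,\d s$, to which Gronwall applies and yields a bound on the current interval in terms of the $p(1+q)$-th moment of the delayed (previous-interval) process. Iterating over the $\lceil T/\tau\rceil$ intervals, the admissible moment order is multiplied by a factor $(1+q)$ at each step; one therefore defines $k_1$ recursively so that it grows geometrically, of order $(1+q)^{\lceil T/\tau\rceil}$, and this is the $k_1$ in the statement, while the extra factor $(1+q)$ in the integrability hypothesis $\xi\in L^0_{k_1(1+q)}(\C)$ supplies the margin needed for the Young and BDG steps. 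The final bound over $[0,T]$ is then controlled by a moment of $\|\xi\|_{\infty}$ of order $k_1$.

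\emph{Main difficulty.} The routine part is the It\^o/BDG/Gronwall machinery; the genuine obstacle is controlling simultaneously (i) the super-linear growth in the current state, which must be tamed through the one-sided Lipschitz condition rather than by direct estimates, (ii) the super-linear \emph{delayed} dependence, whose powers force the geometric blow-up of the moment order $k_1$ across the intervals, and (iii) the measure arguments, which couple the estimate to its own law and must be kept on the Gronwall side via $\mathcal W_2(\mathcal L_{X(t)},\delta_{\mathbf 0})\le (\E\|X\|_{\infty,t}^2)^{1/2}$. Making the induction close uniformly across these three effects, with an honest tracking of the exponent $k_1$, is where the care is required.
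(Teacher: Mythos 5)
Your proposal is correct and follows essentially the same route as the paper: It\^o's formula applied to $|X(t)|^p$, the one-sided Lipschitz condition to tame the super-linear current-state dependence, reduction to an integral inequality in which the delayed process enters at moment order $p(1+q)$, and an induction over the intervals $[j\tau,(j+1)\tau]$ with exponents growing geometrically like $(1+q)^{[T/\tau]}$ (the paper makes this precise via the explicit sequence $k_i$ and an application of H\"older's inequality to pass between consecutive exponents). The only cosmetic difference is your bookkeeping of $k_1$ versus the paper's explicit formula $k_i=p([T/\tau]+2-i)(1+q)^{[T/\tau]+1-i}$, which agrees with your description up to a polynomial prefactor.
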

\begin{proof}
We prove the result for $k=2$ and $s_1=0, s_2= - \tau$ only. It\^{o}'s formula implies, for $t \geq 0$ and $p \geq 2$, 
\begin{align*}
|X(t)|^p \le & |\xi(0)|^p + p \int_{0}^{t} |X(s)|^{p-2} \left \langle X(s), b(X(s),X({s-\tau}),\mathcal{L}_{X(s)}, \mathcal{L}_{X(s-\tau)}) \right \rangle \, \mathrm{d}s \\
&+ p \int_{0}^{t} |X(s)|^{p-2} \left \langle X(s), \sigma(X(s),X({s-\tau}),\mathcal{L}_{X(s)}, \mathcal{L}_{X(s-\tau)}) \, \mathrm{d}W(s) \right \rangle  \\
&+ {\color{black}\frac{p(p-1)}{2}} \int_{0}^{t} |X(s)|^{p-2} \| \sigma(X(s),X({s-\tau}),\mathcal{L}_{X(s)}, \mathcal{L}_{X({s-\tau})}) \|^2 \, \mathrm{d}s.
\end{align*}  
Using assumptions ({\bf AD$_b^1$}), ({\bf AD$_{\sigma}^1$}) and the moment boundedness of the initial process, it is standard to show that there exists a constant $C>0$, such that
\begin{equation*}
\mathbb{E} [\|X\|_{\infty, t}^p]  \leq  C + C \int_{0}^{t} \mathbb{E} [\|X\|^p_{\infty, s}] \, \mathrm{d}s +  C \int_{0}^{0 \vee (t-\tau)} \mathbb{E}[ \|X\|_{\infty, s}^p (1+  \|X\|_{\infty, s}^{pq}) ] \, \mathrm{d}s.
\end{equation*}
We define, for the given $p \geq 2$,
\begin{equation*}
k_i:= p([T/\tau]+2-i)(1+ q)^{[T/\tau] +1- i}, \quad \text{ for } i \in \lbrace 1, \ldots, [T/\tau]+1 \rbrace.
\end{equation*}
Thus, 
\begin{equation*}
(1 + q)k_{i+1} < k_i, \quad k_{[T/\tau]+1} = p, \quad i \in \lbrace 1, \ldots, [T/\tau] \rbrace.
\end{equation*}
{\color{black}Hence, we have on $[0,\tau]$} 
\begin{equation*}
\mathbb{E}[ \|X \|_{\infty, \tau}^{k_1}]< \infty.
\end{equation*}
Consequently, by H\"{o}lder's inequality,
\begin{align*}
\mathbb{E} [ \|X \|_{\infty, 2\tau}^{k_2}]  &\leq C + C \int_{0}^{2 \tau} \mathbb{E} [\|X\|^{k_2}_{\infty, s}] \, \mathrm{d}s + C \int_{0}^{\tau} \mathbb{E}  [\|X \|_{\infty, s}^{(1 + q)k_2}]  \, \mathrm{d}s \\
&\leq  C +  C \int_{0}^{2 \tau} \mathbb{E} [\|X\|^{k_2}_{\infty, s}] \, \mathrm{d}s + C \int_{0}^{\tau} \left( \mathbb{E} [\|X \|_{\infty, s}^{k_1}]   \right)^{\frac{(1 + q)k_2}{k_1}}  \, \mathrm{d}s  < \infty.
\end{align*}
This procedure can be repeated up to time $T>0$ and the claim follows.
\end{proof}

We now introduce a particle system associated with the point-delay McKean--Vlasov SDE (\ref{E00})
\begin{align}\label{eq:DelayParticleSystem_cont}
{\rm d} X^{i,N}(t) &=  b(\Pi(X_{t}^{i,N}),\Pi(\mu_{t}^{X,N})) {\, \rm d} t + 
\sigma(\Pi(X^{i}_{t}),\Pi(\mu_{t}^{X,N})) {\, \rm d} W^{i}(t),
\end{align}
where
\begin{align*}
{\color{black}\Pi(\mu_{t}^{X,N}) := \left(\mu_{t +s_1}^{X,N}, \ldots, \mu_{t+s_k}^{X,N} \right), \quad \mu_{t +s_l}^{X,N}(\mathrm{d}x):=  \frac{1}{N} \sum_{j=1}^{N} \delta_{X^{j,N}(t+s_l)}(\mathrm{d}x), \text{ for } l \in \lbrace 1, \ldots, k \rbrace,}
\end{align*}
and $(W^i,X^{i,N}_0)_{i \in \mathbb{S}_N}$ are independent copies of $(W,\xi)$.
The following proposition provides a strong propagation of chaos result, which is proven in \cite[Proposition 5.2]{CGR}. Note that Assumptions $\bar{\bar{A}}_1$-$\bar{\bar{A}}_5$ in \cite{CGR} are satisfied if Assumptions ({\bf AD$_b^1$}) and ({\bf  AD$_\sigma^1$}) hold
(taking without loss of generality $q_1=q_2$).

\begin{prp}[Proposition 5.2 in \cite{CGR}] \label{Prop:PoCDelayMcKean}
Let Assumptions ({\bf AD$_b^1$}) and ({\bf  AD$_\sigma^1$}) hold and suppose $X_0 = \xi \in L^{0}_{k_1}(\mathscr{C})$, {\color{black}with $k_1$ as in Proposition \ref{Prop:MomentDelayMcKean} such that  $k_1 >4 $}.
Then, the interacting particle system \eqref{eq:DelayParticleSystem_cont} is strongly well-posed
and its solution $X^{i,N}$ converges to the solution $X^i$ of \eqref{E00}
with $W$ replaced by $W^i$
 (a ``non-interacting particle system''),
\[ 
\max_{i \in \mathbb{S}_N} \sup_{t\in[0,T]}\mathbb{E}[|X^i_t-X^{i,N}_t|^2]\leq C 
\begin{cases}
N^{-1/2}, &  \mbox{ if }  d<4, \\
N^{-1/2} \ln(N), &  \mbox{ if } d=4, \\
N^{-2/d}, &  \mbox{ if } d>4
\end{cases}
\]
for any $N\in\mathbb{N}$, where the constant $C>0$ does not depend on $N$. 
\end{prp}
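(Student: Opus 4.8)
The plan is to run the standard synchronous-coupling argument for propagation of chaos, with the final rate dictated by the quantitative Wasserstein convergence of empirical measures. First I would introduce the auxiliary ``non-interacting'' system $X^i$ solving \eqref{E00} driven by $W^i$ with the initial segment $\xi^i$; by construction the $X^i$ are i.i.d.\ copies of the solution $X$, each with marginal law $\mathcal{L}_{X(t)}$, and Proposition \ref{Prop:MomentDelayMcKean} gives $\mathbb{E}\|X^i\|_{\infty,T}^p<\infty$ uniformly in $i$. The analogous moment bound for the particle system $X^{i,N}$, uniform in $N$, follows by the same computation, using exchangeability to control the empirical-measure arguments.

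Setting $e_i(t):=X^i(t)-X^{i,N}(t)$, I would apply It\^{o}'s formula to $|e_i(t)|^2$ and take expectations. The drift difference $b(\Pi(X^i_t),\mathcal{L}_{\Pi(X^i_t)})-b(\Pi(X^{i,N}_t),\Pi(\mu^{X,N}_t))$ is split through the intermediate term $b(\Pi(X^{i,N}_t),\mathcal{L}_{\Pi(X^i_t)})$ into a ``state'' part and a ``measure'' part. For the state part, I would first change the current component using the one-sided Lipschitz bound (1) of ({\bf AD$_b^1$}), then change the delayed component using the polynomially weighted bound (2) together with Cauchy--Schwarz and Young's inequality; the super-linear weights $1+|y_i|^{q_1}+|\tilde y_i|^{q_1}$ are absorbed by H\"{o}lder's inequality against the uniform moment bounds, which is precisely where $p>4$ is needed. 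The measure part is handled by (3) of ({\bf AD$_b^1$}), producing terms $\mathcal{W}_2(\mathcal{L}_{X(t+s_l)},\mu^{X,N}_{t+s_l})$, and the diffusion contribution is treated identically via ({\bf AD$_\sigma^1$}) and the Burkholder--Davis--Gundy inequality.

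The key step is the decomposition
\[
\mathcal{W}_2(\mathcal{L}_{X(t+s_l)},\mu^{X,N}_{t+s_l}) \le \mathcal{W}_2(\mathcal{L}_{X(t+s_l)},\mu^{\bar{X},N}_{t+s_l}) + \mathcal{W}_2(\mu^{\bar{X},N}_{t+s_l},\mu^{X,N}_{t+s_l}),
\]
where $\mu^{\bar{X},N}$ is the empirical measure of the i.i.d.\ system $X^i$. The second term is bounded pathwise through the synchronous coupling by $\mathcal{W}_2^2(\mu^{\bar{X},N}_t,\mu^{X,N}_t)\le \tfrac1N\sum_j|e_j(t)|^2$, which after taking expectations and using exchangeability feeds back $\mathbb{E}|e_i(t)|^2$. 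The first term is where the dimension-dependent rates enter: since the $X^i(t)$ are i.i.d.\ with law $\mathcal{L}_{X(t)}$ of finite moment order beyond $4$, the Fournier--Guillin estimate for the mean Wasserstein-$2$ distance between an empirical measure and its limit yields exactly $N^{-1/2}$ for $d<4$, $N^{-1/2}\ln N$ for $d=4$, and $N^{-2/d}$ for $d>4$; denote this common bound by $r_N$.

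Collecting the estimates produces a Gronwall-type inequality of the form
\[
\max_{i}\sup_{s\le t}\mathbb{E}|e_i(s)|^2 \le C\int_0^t \max_{i}\sup_{u\le s}\mathbb{E}|e_i(u)|^2\,\mathrm{d}s + C\int_{0}^{0\vee(t-\tau)}\max_{i}\sup_{u\le s}\mathbb{E}|e_i(u)|^2\,\mathrm{d}s + C\,r_N,
\]
which I would close by induction over the delay intervals $[0,\tau],[\tau,2\tau],\dots$. Since $e_i(s)=0$ for $s\in[-\tau,0]$ (both systems coincide with the common initial segment $\xi^i$ there), the delayed feedback integral is empty for $t\le\tau$, so on $[0,\tau]$ the only source term is the measure-approximation error $r_N$ and a standard Gronwall argument gives $\sup_{s\le\tau}\max_i\mathbb{E}|e_i(s)|^2\le C r_N$; on each subsequent interval the delayed contribution is already controlled by the bound from the previous interval, so the rate $r_N$ propagates up to $T$. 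The main obstacle I anticipate is the careful bookkeeping of the super-linear delayed terms: one must verify that, after the H\"{o}lder splitting, the polynomial growth factors only consume a fixed number of moments (hence $p>4$) and do not degrade the Wasserstein rate, and that exchangeability legitimately reduces the $N$-particle empirical error back to the single-particle quantity $\mathbb{E}|e_i(t)|^2$.
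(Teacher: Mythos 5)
The paper does not prove this proposition itself---it is imported verbatim as \cite[Proposition 5.2]{CGR}, with only the remark that Assumptions ({\bf AD$_b^1$}) and ({\bf AD$_\sigma^1$}) imply the hypotheses of that reference. Your sketch is the standard synchronous-coupling argument (It\^{o} on $|e_i|^2$, one-sided Lipschitz in the current state, H\"{o}lder absorption of the polynomially weighted delayed terms, triangle inequality through the empirical measure of the i.i.d.\ copies, Fournier--Guillin rates, Gronwall plus induction over delay intervals) and is sound; the only imprecision is that $p>4$ is what drives the dimension-dependent Wasserstein rates, whereas absorbing the super-linear factors $1+|y_i|^{q_1}$ actually requires moments of order roughly $p(1+q_1)$, which are supplied by the well-posedness/moment theory (cf.\ the moment orders $k_i$ in Proposition \ref{Prop:MomentDelayMcKean}) rather than by the bare hypothesis $p>4$.
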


\begin{remark}
It is possible to rewrite the particle system as a sequence of $(N\times d \times k)$-dimensional SDEs without explicit delay dependence (see \cite{KUCHLER}). Due to the dimensionality and the structure of the coefficients of the enlarged system, classical results are not directly applicable, {\color{black}as it is crucial in our analysis that implied constants are independent of the number of particles $N$} and the extra difficulties arising from the delayed components, which we address in the following, cannot be avoided.
\end{remark}

\subsection{Milstein schemes for delay McKean--Vlasov equations}\label{subsec:milstein}
We propose the following tamed Milstein schemes for the numerical approximation of the particle system \eqref{eq:DelayParticleSystem_cont},
which are motivated by the scheme presented in \cite{HUMY}. 
Considering a uniform time mesh on $[-\tau,T]$ with mesh-size $\delta = T/M = \tau/M'$, for integers $M, M'>1$, we define the following approximation of \eqref{eq:DelayParticleSystem_cont} ({\color{black}note that we assume the delayed time-points $t_n +s_l$ to be contained on the grid}): 
\begin{align}\label{eq:DelayParticleSystem}
Y^{i,N}(t_{n+1}) &=  Y^{i,N}(t_n) + b_{\delta}(\Pi(Y_{t_n}^{i,N}),\Pi(\mu_{t_n}^{Y,N}))\delta + \sigma(\Pi(Y^{i}_{t_n}),\Pi(\mu_{t_n}^{Y,N}))\Delta W^{i}_n \nonumber \\
& \quad + \sum_{l=1}^{k} \partial_{x_l} \sigma(\Pi(Y^{i,N}_{t_n}),\Pi(\mu_{t_n}^{Y,N})) \tilde{\sigma}({\color{black}t_n + s_l},\Pi(Y^{i,N}_{t_n + s_l}),\Pi(\mu_{t_n +s_l}^{Y,N}))\int_{t_n}^{t_{n+1}} \int_{t_n +s_l}^{s+ s_l}  \, \mathrm{d} W^{i}(u)  \, \mathrm{d} W^{i}(s) \nonumber \\
& \quad + \sum_{l=1}^{k} \frac{1}{N}\sum_{j = 1}^{N}  \partial_{\mu_l} \sigma(\Pi(Y_{t_n}^{i,N}), \Pi(\mu_{t_n}^{Y,N}))(Y^{j,N}(t_n +s_l)) \nonumber \\
& \hspace{2cm} \times \tilde{\sigma}({\color{black}t_n + s_l},\Pi(Y_{t_n+s_l}^{j,N}), \Pi(\mu_{t_n+s_l}^{Y,N})) \int_{t_n}^{t_{n+1}}  \int_{t_n + s_l}^{s +s_l} \, \mathrm{d}W^{j}(u)  \, \mathrm{d}W^{i}(s),
\end{align} 
for $t_n :=n\delta \geq 0$, $n \in \lbrace 0, \ldots, M-1 \rbrace$. Furthermore,
\begin{equation*}
\tilde{\sigma}({\color{black} t},\Pi(Y^{i,N}_{t}),\Pi(\mu_{t}^{Y,N})) := 
\begin{cases}
\sigma(\Pi(Y^{i,N}_{t}),\Pi(\mu_{t}^{Y,N})), \quad & t \geq 0, \\
0, \quad  & -\tau \leq t < 0,
\end{cases}
\end{equation*}
and we used the notation
\begin{align*}
 \Pi(\mu_{t_n}^{Y,N}) := \left(\mu_{t_n +s_1}^{Y,N}, \ldots, \mu_{t_n+s_k}^{Y,N} \right), \quad \mu_{t_n +s_l}^{Y,N}(\mathrm{d}x):=  \frac{1}{N} \sum_{j=1}^{N} \delta_{Y^{j,N}(t_n+s_l)}(\mathrm{d}x), \text{ for } l \in \lbrace 1, \ldots, k \rbrace.
\end{align*}
We denote by $\partial_{x_l}$ the gradient with respect to the $l$-th state component and by $\partial_{\mu_l}$ the $L$-derivative with respect to $l$-th measure component. Observe that $\partial_{x_l}$ and $\partial_{\mu_l}$ when applied to $\sigma$ have to be understood in an operator sense. Note that for $t \leq 0$, we have $Y^{i,N}(t) = \xi^i(t)$; an independent copy of $\xi$. As in the non-delay case \cite{stock}, we define $b_\dd$ in two different ways yielding schemes which will be denoted by Scheme $1$ and Scheme $2$, respectively: 
For Scheme 1, we use  
\begin{equation*}
b_\dd(x,\mu):=\ff{b(x,\mu)}{1+\dd|b(x,\mu)|}, \quad x \in \R^{dk}, \
\mu \in \mathcal {P}_2(\R^{d})^k,
\end{equation*}
and for Scheme 2, we define  
\begin{equation*}
b_\dd(x,\mu):=\ff{b(x,\mu)}{1+\dd|b(x,\mu)|^{\bar{q}}}, \quad x \in \R^{dk}, \
\mu\in\mathcal {P}_2(\R^{d})^k,
\end{equation*}
where $\bar{q} = \tfrac{2q_1}{q_1+1}$. For any $t \in [0,T]$, let $t_\dd :=  \lfloor t / \dd \rfloor \delta$. The continuous-time version of (\ref{eq:DelayParticleSystem}) is
\begin{align*}
\mathrm{d}Y^{i,N}(t) &=  b_{\delta}(\Pi(Y^{i,N}_{t_\dd}),\Pi(\mu_{t_\dd}^{Y,N}))\, \mathrm{d}t + \Bigg(\sigma(\Pi(Y^{i}_{t_\dd}),\Pi(\mu_{t_\dd}^{Y,N}))  \nonumber \\
& \quad + \sum_{l=1}^{k} \partial_{x_l} \sigma(\Pi(Y^{i,N}_{t_\dd}),\Pi(\mu_{t_\dd}^{Y,N})) \tilde{\sigma}({\color{black}t_\dd + s_l},\Pi(Y^{i,N}_{t_\dd + s_l}),\Pi(\mu_{t_\dd +s_l}^{Y,N})) \int_{t_\dd +s_l}^{t+ s_l}  \, \mathrm{d} W^{i}(u)  \nonumber \\
& \quad + \sum_{l=1}^{k} \frac{1}{N}\sum_{j = 1}^{N}  \partial_{\mu_l} \sigma(\Pi(Y_{t_\dd}^{i,N}), \Pi(\mu_{t_\dd}^{Y,N}))(Y^{j,N}(t_\dd +s_l))  \nonumber \\
& \hspace{2cm} \times \tilde{\sigma}({\color{black}t_\dd + s_l},\Pi(Y_{t_\dd+s_l}^{j,N}), \Pi(\mu_{t_\dd+s_l}^{Y,N}))  \int_{t_\dd + s_l}^{t +s_l} \, \mathrm{d}W^{j}(u) \Bigg)  \, \mathrm{d}W^{i}(t).
\end{align*}
A Milstein scheme for globally Lipschitz continuous drift and diffusion coefficients can be readily formulated by replacing $b_\dd$ with $b$ in (\ref{eq:DelayParticleSystem}).

In the sequel, we restrict the discussion to $d=m=1$, and introduce
\begin{equation*}
I^{i}(t_n + s_l,t_{n+1} +s_l; s_l) := \int_{t_n}^{t_{n+1}} \int_{t_n +s_l}^{s+ s_l}  \, \mathrm{d} W^{i}(u)  \, \mathrm{d} W^{i}(s). 
\end{equation*}
For $s_l = 0$, this double It\^{o} integral simplifies to 
\begin{equation*}
I^{i}(t_n,t_{n+1};0) = \frac{1}{2}( (\Delta W^{i}_n)^2 -\delta).
\end{equation*}
To simulate all the double stochastic integrals for $s_l \neq 0$, which appear for delay equations already for $m=d=1$, one can employ an approximation of $I^{i}(t_n + s_l,t_{n+1} +s_l; s_l)$; see \cite{KP} and \cite{WSON,JGTL,SJAMAW} for further developments on this topic. 
{\color{black} Based on the definitions $\bar{W}^{i}(s):=W^{i}(s+t_n + s_l) - W^{i}(t_n + s_l)$ (delayed Brownian increment), and $\bar{B}^{i}(s):=\bar{W}^{i}(s-s_l) - \bar{W}^{i}(-s_l)$ (present increment), for $s \geq 0$, \cite{HUMY} introduces an approximation for $I^{i}(t_n + s_l,t_{n+1} +s_l; s_l)$ based on an infinite series representation for Brownian bridges (truncated at level $r \in \mathbb{N}$) as in \cite{KP}.
In our tests, we add a term for approximating the tail of the L\'{e}vy area as in \cite{WSON}, to improve the mean-square error of the approximation to the order $\mathcal{O}(\delta^2/r^2)$. This justifies the choice $r = \mathcal{O}(\delta^{-1/2})$, compared to 
$1/r \leq \delta \land \min \lbrace |s_l|: \ l \in \lbrace 1, \ldots, k \rbrace  \rbrace$
 given in \cite[Lemma 4.2]{HUMY}, and hence reduces the complexity.} 
The tamed Milstein scheme combined with this approximation technique of the double stochastic integrals has computational complexity of order $\vv^{-3/2}$
per sample for a root-mean-square error of order $\vv>0$, compared to the standard Euler--Maruyama scheme, which has complexity of order $\vv^{-2}$. \\ \\
\noindent
The following additional assumptions will be needed for the subsequent presentation and are motivated by the assumptions formulated in \cite{stock}. {\color{black}We impose the following to hold uniformly in $x_1, x_1', z_1, z_1' \in\R^d$, $\mu_1, \tilde{\mu}_1 \in \mathcal{P}_2(\R^d)$, $\boldsymbol{y}_{k}, \tilde{\boldsymbol{y}}_{k}, \hat{\boldsymbol{y}}_{k} \in \R^{dk}$, and $\boldsymbol{\nu}_{k}, \tilde{\boldsymbol{\nu}}_{k}, \hat{\boldsymbol{\nu}}_{k} \in \mathcal{P}_{2}(\R^{d})^{k}$}:
\begin{enumerate}
\item[({\bf AD$_b^2$})] The drift $b$ does not depend on delay variables and $b \in C^{1,1}(\R^{d}\times \mathcal{P}_2(\R^{d}))$, i.e., $b$ is continuously differentiable in both components. 
\item[({\bf AD$_b^3$})] There exist constants  $ L^3_b, q_3  \geq 0$ such that
\begin{equation*}
\begin{split}
\|\partial_x b(x_1,\mu_1)- \partial_x b(x_1',\tilde{\mu}_1)\| &\le
L_b^3 \Big \{ |x_1-x_1'|(1 + |x_1|^{q_3} + |x_1'|^{q_3}) + \mathcal{W}_2(\mu_1,\tilde{\mu}_1) \Big \},\\
\|\partial_{\mu}b(x_1,\mu_1)(z_1)-\partial_{\mu}b(x_1',\tilde{\mu}_1)(z_1') \| &\le
L_b^3 \Big \{|x_1-x_1'| + |z_1-z_1'| + \mathcal{W}_2(\mu_1, \tilde{\mu}_1) \Big \}.
\end{split}
\end{equation*}
\end{enumerate}
\noindent
The reason why $b$ cannot depend on delay variables will be made clearer in the proof of Theorem \ref{THDelay:THDelay4}.
Concerning the diffusion term $\si$ with columns denoted by $\si_u$ for $u  \in \lbrace 1, \ldots, m \rbrace$, we further require:
\begin{enumerate}
\item[({\bf AD$_\sigma^2$})] {\color{black}For all $u \in  \lbrace 1, \ldots, m \rbrace$, we have $\sigma_u \in C^{1,1}(\R^{dk}\times \mathcal{P}_2(\R^{d})^k)$ and, in particular, there exist constants $L_{\sigma}^2, q_2 \geq 0$ such that for all $u$
\begin{equation*}
\begin{split}
& \|  \partial_{\mu_1} \sigma_u(\boldsymbol{y}_{k},\boldsymbol{\nu}_{k})(z_1) \sigma(\boldsymbol{y}_{k},\boldsymbol{\nu}_{k})  \| \ \vee \  \| \partial_{x_1} \sigma_u(\boldsymbol{y}_{k},\boldsymbol{\nu}_{k}) \sigma(\boldsymbol{y}_{k}, \boldsymbol{\nu}_{k})  \|  \\
& \qquad \leq L_{\sigma}^2 \Big \lbrace 1+|z_1| +|y_1|+\nu_1(|\cdot|^{2})^{\frac{1}{2}}+ \sum_{i=2}^{k} ( |y_i|^{q_2+1} + \nu_i(|\cdot|^{2})^{\frac{1}{2}}) \Big \rbrace.
\end{split}
\end{equation*}}
For the derivatives with respect to the delay variables, we require that one of the following holds (the reason and structure for these assumptions will become clearer in Section \ref{Section:Sec3}):
\begin{enumerate}[label=(\alph*)]
\item The derivatives (in the state and measure component) with respect to the delay variables are uniformly bounded. In particular, we impose $q_2=0$ in ({\bf AD$_\sigma^1$}). 
\item {\color{black}For all $u \in  \lbrace 1, \ldots, m \rbrace$ and for any $l>1$, and $z_l \in \mathbb{R}^{d}$, the derivatives $ \partial_{\mu_l} \sigma_u(\boldsymbol{y}_{k}, \boldsymbol{\nu}_{k})(z_l)$ and $\partial_{x_l} \sigma_u(\boldsymbol{y}_{k},\boldsymbol{\nu}_{k})$ do not depend on $y_1, \ldots, y_{l-1}$ and $\nu_1, \ldots, \nu_{l-1}$, and for a constant $q_3 \geq 0$
\begin{align*}
& \|  \partial_{\mu_l} \sigma_u(\boldsymbol{y}_{k},\boldsymbol{\nu}_{k})(z_l) \sigma(\hat{\boldsymbol{y}}_{k}, \hat{\boldsymbol{\nu}}_{k})  \|  \\
& \quad \leq L_{\sigma}^2 \Big \lbrace 1+ |z_l| + \sum_{i=l}^{k} ( |y_i|^{q_3+1} + \nu_i(|\cdot|^{2})^{\frac{1}{2}})   + \sum_{i=1}^{k} ( |\hat{y}_i|^{q_3+1} + \hat{\nu}_i(|\cdot|^{2})^{\frac{1}{2}}) \Big \rbrace, \\
& \| \partial_{x_l} \sigma_u(\boldsymbol{y}_{k},\boldsymbol{\nu}_{k}) \sigma(\hat{\boldsymbol{y}}_{k},\hat{\boldsymbol{\nu}}_{k}) \| \\
& \quad \le L_{\sigma}^2 \Big \lbrace 1+ \sum_{i=l}^{k} ( |y_i|^{q_3+1} + \nu_i(|\cdot|^{2})^{\frac{1}{2}})  + \sum_{i=1}^{k} ( |\hat{y}_i|^{q_3+1} + \hat{\nu}_i(|\cdot|^{2})^{\frac{1}{2}}) \Big \rbrace.
\end{align*}
}
\end{enumerate}
\item[({\bf AD$_\sigma^3$})] There exists a constant $L_{\sigma}^3 \geq 0$ such that for all $u \in  \lbrace 1, \ldots, m \rbrace$
\begin{equation*}
\begin{split}
&\|\partial_{x_1} \si_u(\boldsymbol{y}_{k},\boldsymbol{\nu}_{k})-\partial_{x_1} \si_u(\tilde{\boldsymbol{y}}_{k}, \tilde{\boldsymbol{\nu}}_{k}) \| \le L_{\sigma}^3 \Big \{ \sum_{i=1}^{k}(|y_i-\tilde{y}_i| + \mathcal{W}_{2}(\nu_i, \tilde{\nu}_i))\Big \}, \\
&\|  \partial_{\mu_1} \si_u(\boldsymbol{y}_{k},\boldsymbol{\nu}_{k})(z_1)-  \partial_{\mu_1} \si_u(\tilde{\boldsymbol{y}}_{k}, \tilde{\boldsymbol{\nu}}_{k})(z_1')\| \le L_{\sigma}^3 \Big \{ |z_1 - z_1'| + \sum_{i=1}^{k}(|y_i - \tilde{y}_i| + \mathcal{W}_{2}(\nu_i, \tilde{\nu}_i))  \Big \}, \\
\end{split}
\end{equation*}
and analogously for the derivatives in the delay components.
\end{enumerate}
\noindent
In addition to above assumptions, we demand:
\begin{itemize}
\item[({\bf H$_1$})] Regularity on the initial data $\xi$: $\xi$ is a deterministic function and there exists a constant $C>0$, {\color{black}such that for any $s,t \in [-\tau,0]$
\begin{equation}\label{eq.MallBound}
| \xi(t) - \xi(s) | \leq C|t-s|.
\end{equation}}
\end{itemize}
The assumption that $\xi$ is deterministic is made to simplify the formulation of the results and could be relaxed. 
We give a stability and strong convergence analysis for Scheme 2 only, as less assumptions are required for this scheme (compared to Scheme 1); see \cite{stock} for a discussion on this. We prove the following statement: 

\begin{thm}\label{THDelay:THDelay4}
Let Assumptions ({\bf AD$_b^1$})--({\bf AD$_b^3$}), ({\bf AD$_\sigma^1$})--({\bf AD$_\sigma^3$}), and ({\bf H$_1$}) hold. Then, for any $p \geq 2$ there exist constants $C_1, C_2 >0$ (independent of $M$ and $N$) such that
\begin{equation*} 
   \max_{i \in \mathbb{S}_N} \max_{n \in \lbrace 0, \ldots, M \rbrace} \mathbb{E}[| Y^{i,N}(t_n) |^p]  \leq C_1, 
\end{equation*} 
and 
\begin{equation}\label{eq:ResultDelay2}
\max_{i \in \mathbb{S}_N} \E[\|X^{i,N}-Y^{i,N}\|^p_{\8, T}]\le C_2 \dd^p,
\end{equation}
where $X^{i,N}$ is given by \eqref{IPSD} and $Y^{i,N}$ is the continuous time version of (\ref{eq:DelayParticleSystem}) with $b_\dd(x,\mu):=\ff{b(x,\mu)}{1+\dd|b(x,\mu)|^{\bar{q}}}$. 
\end{thm}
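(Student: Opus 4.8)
The plan is to establish the two assertions—uniform moment bounds and first-order strong convergence—in sequence, since the convergence estimate will rely on the moment bounds for both the scheme $Y^{i,N}$ and the particle system $X^{i,N}$ (the latter from Proposition \ref{Prop:MomentDelayMcKean} applied to each particle, uniformly in $N$). First I would prove the moment bound $\max_i \max_n \E[|Y^{i,N}(t_n)|^p] \leq C_1$ by an inductive argument on the delay intervals $[(j-1)\tau, j\tau]$. The key device here is the taming of the drift in Scheme 2: since $|b_\delta(x,\mu)| \leq \delta^{-1/2}$ uniformly, the explosive super-linear growth in the state is controlled. On each interval I would write the one-step recursion, take the $p$-th moment, and use ({\bf AD$_b^1$}), ({\bf AD$_\sigma^1$}) together with the linear growth of the diffusion in the current state (recalling that the delayed values on the first interval are furnished by the regular deterministic initial data $\xi$ via ({\bf H$_1$})). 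The double-integral Milstein correction terms must be handled carefully: using the It\^o isometry and the structural bounds in ({\bf AD$_\sigma^2$}) on the products $\partial_{x_l}\sigma\,\tilde\sigma$ and $\partial_{\mu_l}\sigma\,\tilde\sigma$, these contribute only terms of order $\delta$ in the mean-square sense, so a discrete Gr\"onwall inequality closes the estimate on each subinterval, and the bound propagates forward.

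For the strong convergence estimate \eqref{eq:ResultDelay2}, the plan is to set $e^{i,N}(t) := X^{i,N}(t) - Y^{i,N}(t)$ and apply It\^o's formula to $|e^{i,N}(t)|^p$ for the continuous-time interpolants, again working interval-by-interval in multiples of $\tau$. The drift difference splits into (i) a one-sided-Lipschitz contribution in the current state, handled by ({\bf AD$_b^1$})(1) to produce the crucial $L_b^1 |e|^2$ term with the correct sign, (ii) a taming error $b - b_\delta$, which is $\mathcal{O}(\delta|b|^3)$ for Scheme 2 and hence $\mathcal{O}(\delta)$ in the relevant $L^p$ norm by the moment bounds, and (iii) measure-dependence terms controlled by $\mathcal{W}_2(\mu_{t}^{X,N}, \mu_{t}^{Y,N}) \leq (\frac{1}{N}\sum_j |e^{j,N}|^2)^{1/2}$, which after taking expectations and maximising over $i$ couples the particles but closes because the bound is symmetric in $i$. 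The diffusion difference is where the Milstein structure pays off: by a first-order Taylor expansion of $\sigma$ about $(\Pi(Y_{t_\delta}), \Pi(\mu_{t_\delta}^Y))$ using the Lions derivative (legitimate since $\sigma_u \in C^{1,1}$), the leading $\mathcal{O}(\delta^{1/2})$ increment cancels exactly against the Milstein correction terms in the scheme, leaving a remainder of order $\delta$; here ({\bf AD$_\sigma^3$}) supplies the Lipschitz continuity of the derivatives needed to bound the Taylor remainder, and the two sub-cases (a)/(b) of ({\bf AD$_\sigma^2$}) control the growth of the delayed-variable derivatives.

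The main obstacle, and the reason the convergence proof cannot simply mimic the non-delay case of \cite{stock}, is the treatment of the delayed double stochastic integrals $\int_{t_n}^{t_{n+1}}\int_{t_n+s_l}^{s+s_l} \mathrm{d}W^i(u)\,\mathrm{d}W^i(s)$ with $s_l \neq 0$. These are genuine L\'evy-area-type objects (the inner integral is over an earlier, disjoint segment of the same Wiener path) and are \emph{not} simple functions of $\Delta W^i_n$, so the usual rewriting $\frac{1}{2}((\Delta W)^2 - \delta)$ is unavailable. I would estimate their $L^p$-norms directly via the Burkholder–Davis–Gundy inequality, establishing that they scale like $\delta$ in mean-square and that their products with the derivative coefficients—bounded through ({\bf AD$_\sigma^2$})—produce only $\mathcal{O}(\delta)$ contributions to the error recursion. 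A secondary technical point, and the structural reason ({\bf AD$_b^2$}) forbids delay dependence in the drift, is that a delayed drift would generate a term requiring an anticipative (Malliavin-type) expansion of $\sigma$ along the earlier Wiener segment that the present non-It\^o-formula argument cannot absorb; see Remark \ref{rem:anticip}. Finally, I would patch the interval-wise estimates together by a nested Gr\"onwall argument, where the constant on $[j\tau,(j+1)\tau]$ depends on the error bound already established on $[(j-1)\tau, j\tau]$, and the delayed contributions enter as known $\mathcal{O}(\delta^p)$ inputs, yielding \eqref{eq:ResultDelay2} up to the final time $T$.
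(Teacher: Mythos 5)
Your proposal follows essentially the same route as the paper: a moment bound for the tamed scheme first (It\^{o}/Gr\"onwall with ({\bf AD$_\sigma^2$}) controlling the anticipative delayed correction terms), then It\^{o}'s formula for the error with the one-sided Lipschitz condition, a Lions-derivative Taylor expansion of $\sigma$ whose leading increments cancel against the Milstein corrections leaving an $\mathcal{O}(\delta)$ remainder via ({\bf AD$_\sigma^3$}), a separate treatment of the drift increment explaining why delayed drift would force anticipative calculus, and an interval-wise induction in multiples of $\tau$. This matches the paper's Lemmas \ref{TH:TH3}--\ref{Lem2D} and Remark \ref{rem:anticip} in both structure and in the role assigned to each assumption.
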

\begin{proof}
The proof is deferred to Section \ref{Section:Sec3}.
\end{proof}

\begin{remark}
Since the implied constant in Theorem \ref{THDelay:THDelay4} is independent of $N$, we can easily complete our statement with 
Proposition \ref{Prop:PoCDelayMcKean} to obtain an approximation of \eqref{E00} in an $L_p$-sense. {\color{black}However, note that the rate provided in Proposition \ref{Prop:PoCDelayMcKean} is, in general, not optimal and does not yield the $1/N$-rate given in \eqref{eq:VarAssump}. To achieve such a convergence order further structural or regularity assumption on the coefficients of \eqref{E00} are required, see e.g., \cite{MEL,LSAT}.}  
\end{remark}

\section{Antithetic MLMC approach for delay equations}\label{Section:SectionAnti}

As explained in the introduction and detailed in the previous section, 
Milstein schemes for delay equations require computationally costly simulation of iterated stochastic integrals already for $d=m=1$. This motivates the study of an antithetic approach for such delay McKean--Vlasov equations in order to improve the efficiency of the simulation. 


\subsection{Problem formulation}
We consider the following one-dimensional, one-point delay McKean--Vlasov SDE
\begin{equation}\label{delayMcKeanequation}
\mathrm{d}X(t) = b(X(t),\mathcal{L}_{X(t)}) \, \mathrm{d}t + \sigma(X(t), X(t- \tau)) \, \mathrm{d}W(t), \quad X_0=\xi \in L^{0}_p(\C),
\end{equation}
for a given $p \geq 2$, with measurable functions $b:\R \times
\mathcal{P}_2(\R)\to \R$, $\si:\R^{2} \to \R$, and where $(W(t))_{t \in [0,T]}$ is a
one-dimensional Brownian motion on some filtered probability space $(\Omega,\mathcal{F},(\mathcal{F}_t)_{t \in [0,T]},\mathbb{P})$. {\color{black}As earlier, we will restrict the discussion to deterministic initial data with time regularity specified in (\ref{eq.MallBound}).} 

{\color{black}
Above and in the analysis, we assume that the diffusion coefficient has no law dependence.
We present the extension of the scheme to the law dependent case below, but for the main part avoid adding to the already lengthy notation.} The scheme is directly applicable to equations with delay in the drift $b$, but the analysis of this case does not follow directly and is left for future work.


The associated particle system has the form 
\begin{equation}\label{delayPS}
\mathrm{d}X^{i,N}(t) = b(X^{i,N}(t),\mu_t^{X,N}) \, \mathrm{d}t + \sigma(X^{i,N}(t), X^{i,N}(t- \tau)) \, \mathrm{d}W^{i}(t), \quad X_0^{i,N}=\xi^{i} \in \C,
\end{equation}
with $(W^{i},\xi^{i})$ an independent copy of $(W,\xi)$ for $i \in \mathbb{S}_N$, and 
\begin{equation*}
\mu_t^{X,N}(\mathrm{d}x):= \frac{1}{N} \sum_{j=1}^{N} \delta_{X^{j,N}(t)}(\mathrm{d}x), \quad t \in [0,T].
\end{equation*} 
In the sequel, we will present model assumptions that guarantee well-posedness of the one point-delay McKean--Vlasov SDE (and the associated particle system) and will also be employed for the analysis of the subsequent numerical scheme. {\color{black}We impose the following to hold uniformly in $x,y,x',y' \in\R $ and $\mu,\nu \in \mathcal{P}_2(\R)$.}
\begin{enumerate}
\item[({\bf AAD}$_b^1$)] There exist constants $L_b^1,q_1 \geq0$ such that
\begin{align*}
&\<x-y,b(x,\mu)-b(y,\mu)\>\le L_b^1 |x-y|^2,  \\
&|b(x,\mu)-b(y,\mu)|\le L_b^1(1+|x|^{q_1}+|y|^{q_1})|x-y|, \\
&|b(x,\mu)-b(x,\nu)|\le L_b^1\mathcal{W}_2(\mu,\nu).
\end{align*}
\item [({\bf AAD}$_b^2$)] The drift satisfies assumption ({\bf AD}$_b^3$).
\item [({\bf AAD}$_b^3$)] The function $\R \ni x \mapsto b(x, \mu)$ belongs to $C^{2}(\R;\R)$ and the second order derivative is (possibly) of polynomial growth, i.e., there exist constants $L^{3}_b \geq 0$ and $q_2 \geq 0$ such that 
\begin{align*}
|\partial^2_x b(x,\mu)| \leq L^{3}_b(1 + |x|^{1 + q_2}).
\end{align*} 
\item[({\bf AAD}$_\si^1$)] There exists an $L_\si^1 \geq 0$ such that
\begin{equation*}
|\si(x,y)-\si(x',y') | \le L_\si^1(|x-x'| + |y-y'|).
\end{equation*}
\item[({\bf AAD}$_\si^2$)] We have $\R \ni x \mapsto \sigma(x, y), \ \R \ni y \mapsto \sigma(x,y) \in C^{2}(\R;\R)$ and their second order derivatives are uniformly bounded.
\end{enumerate}
\subsection{Antithetic sampling scheme}
Throughout the remaining sections, we denote by $\partial_{x_1} \sigma$ and $\partial_{x_2} \sigma$ the derivatives of $\sigma$ with respect to the first and second component, respectively.

The tamed Milstein scheme for this system of particles has the form ({\color{black}for ease of notation, we refrain from using to notation $\tilde{\sigma}$ below})
\begin{align*}
{Y}^{i,N}({t_{n+1}}) & = {Y}^{i,N}({t_{n}}) +   b_\dd({Y}^{i,N}(t_n), \mu_{t_n}^{{Y},N}) \delta + \sigma({Y}^{i,N}(t_n), {Y}^{i,N}(t_n- \tau)) \Delta W^{i}_n  \\
& \quad + \sigma({Y}^{i,N}(t_n), {Y}^{i,N}(t_n- \tau)) \partial_{x_1} \sigma({Y}^{i,N}(t_n), {Y}^{i,N}(t_n- \tau)) \frac{(\Delta W^{i}_n)^2- \delta}{2} \\
& \quad + \sigma({Y}^{i,N}(t_n- \tau), {Y}^{i,N}(t_n- 2\tau)) \partial_{x_2} \sigma({Y}^{i,N}(t_n), {Y}^{i,N}(t_n- \tau)) \int_{t_n}^{t_{n+1}} \int_{t_n}^{s} \, \mathrm{d}W^{i}_{u- \tau} \, \mathrm{d}W^{i}_s
\end{align*}
for $t_m:=m\delta$, $m \in \lbrace 0, \ldots, M \rbrace$, with $\delta = T/M = \tau/M'$ for $M, M' >1$,
and where
\begin{equation*}
{Y}^{i,N}(\theta) = \xi^{i}(\theta), \theta \in [-\tau,0], \quad  \mu_{t_n}^{{Y},N}(\mathrm{d}x) := \frac{1}{N} \sum_{j=1}^{N} \delta_{{Y}^{j,N}({t_{n}})}(\mathrm{d}x).
\end{equation*}
For simplicity, we assume that $b_\dd$ is defined by Scheme 2 introduced in Section \ref{subsec:milstein}, as less assumptions are needed to guarantee moment boundedness in that case.

Now, we propose a modified scheme which does not require the simulation of iterated stochastic integrals, but still achieves a higher order convergence rate for the variance of multilevel correction terms: We define for $n \in \lbrace 0, \ldots, M-1 \rbrace$ what will be the coarse grid solution
\begin{align}\label{eq:coarsepath}
Y^{i,N,c}({t_{n+1}}) & = Y^{i,N,c}({t_{n}}) + b_\dd(Y^{i,N,c}(t_n), \mu_{t_n}^{Y,N,c}) \delta + \sigma(Y^{i,N,c}(t_n), Y^{i,N,c}(t_n- \tau)) \Delta W^{i}_n  \nonumber \\
& \quad + \sigma(Y^{i,N,c}(t_n), Y^{i,N,c}(t_n- \tau)) \partial_{x_1} \sigma (Y^{i,N,c}(t_n), Y^{i,N,c}(t_n- \tau)) \frac{(\Delta W^{i}_n)^2- \delta}{2} \nonumber  \\
& \quad + \sigma(Y^{i,N,c}(t_n- \tau), Y^{i,N,c}(t_n- 2\tau)) \partial_{x_2} \sigma (Y^{i,N,c}(t_n), Y^{i,N,c}(t_n- \tau))\frac{\Delta W^{i}_n \Delta W^{i}_{n- \tau}}{2}, \nonumber \\
&=: \mathscr{S}\left(Y^{i,N,c}({t_{n}}), Y^{i,N,c}({t_{n}-\tau}), Y^{i,N,c}({t_{n}- 2\tau}), \mu_{t_n}^{Y,N,c}, \delta, \Delta W^{i}_n, \Delta W^{i}_n \Delta W^{i}_{n- \tau} \right),
\end{align}
where
\begin{equation*}
 Y^{i,N,c}(\theta) := \xi^{i}(\theta), \theta \in [-\tau,0], \quad \mu_{t_n}^{Y,N,c}(\mathrm{d}x):= \frac{1}{N} \sum_{j=1}^{N} \delta_{Y^{j,N,c}({t_{n}})}(\mathrm{d}x),
\end{equation*}
and $\Delta W^{i}_{n- \tau} := W^{i}(t_{n+1} - \tau) - W^{i}(t_{n} - \tau)$.

{\color{black}The above proposed modified scheme on the coarse grid can be readily extended to the setting where the diffusion is law dependent, i.e., as in \eqref{E00},
\begin{eqnarray*}
\d X(t)=b(\Pi(X_t),\mathcal{L}_{\Pi(X_t)}) \, \d
t+\si(\Pi(X_t),\mathcal{L}_{\Pi(X_t)}) \, \d W(t), \quad X_0=\xi\in \C.
\end{eqnarray*}
To be precise, consider a diffusion depending explicitly on $\mathcal{L}_{X(t)}$ and $\mathcal{L}_{X(t-\tau)}$ (i.e., $k=2$), then this would yield the following modified scheme: For $n \in \lbrace 0, \ldots, M-1 \rbrace$
\begin{align*}
Y^{i,N,c}({t_{n+1}}) & = Y^{i,N,c}({t_{n}}) + b_\dd(Y^{i,N,c}(t_n), \mu_{t_n}^{Y,N,c}) \delta + \sigma(\Pi(Y^{i,N,c}_{t_n}),\Pi(\mu_{t_n}^{Y,N,c})) \Delta W^{i}_n  \nonumber \\
& \quad + \sigma(\Pi(Y^{i,N,c}_{t_n}),\Pi(\mu_{t_n}^{Y,N,c})) \partial_{x_1} \sigma(\Pi(Y^{i,N,c}_{t_n}),\Pi(\mu_{t_n}^{Y,N,c})) \frac{(\Delta W^{i}_n)^2- \delta}{2} \nonumber  \\
& \quad + \sigma(\Pi(Y^{i,N,c}_{t_n- \tau}),\Pi(\mu_{t_n-\tau}^{Y,N,c})) \partial_{x_2} \sigma (\Pi(Y^{i,N,c}_{t_n}),\Pi(\mu_{t_n}^{Y,N,c}))\frac{\Delta W^{i}_n \Delta W^{i}_{n- \tau}}{2}, \nonumber\\
& \quad + \frac{1}{N} \sum_{j=1}^{N} \sigma(\Pi(Y^{j,N,c}_{t_n}),\Pi(\mu_{t_n}^{Y,N,c})) \partial_{\mu_1} \sigma (\Pi(Y^{i,N,c}_{t_n}),\Pi(\mu_{t_n}^{Y,N,c}))(Y^{j,N,c}(t_n)) \frac{\Delta W^{i}_n \Delta W^{j}_n- \mathrm{I}_{\lbrace i=j \rbrace}\delta}{2} \nonumber  \\
& \quad + \frac{1}{N} \sum_{j=1}^{N} \sigma(\Pi(Y^{j,N,c}_{t_n- \tau}),\Pi(\mu_{t_n-\tau}^{Y,N,c})) \partial_{\mu_2} \sigma (\Pi(Y^{i,N,c}_{t_n}),\Pi(\mu_{t_n}^{Y,N,c}))(Y^{j,N,c}(t_n-\tau)) \frac{\Delta W^{i}_n \Delta W^{j}_{n-\tau}}{2}, 
\end{align*}
where
\begin{equation*}
 Y^{i,N,c}(\theta) := \xi^{i}(\theta), \theta \in [-\tau,0], \quad \mu_{t_n}^{Y,N,c}(\mathrm{d}x):= \frac{1}{N} \sum_{j=1}^{N} \delta_{Y^{j,N,c}({t_{n}})}(\mathrm{d}x)
\end{equation*}
(and similarly the processes $Y^{i,N,f}$ and $Y^{i,n,a}$, precisely defined below). 
The presence of $L$-derivative terms and associated iterated integrals can be accounted for in the proofs as the terms involving spatial derivatives without providing new mathematical challenges.
}

Using above assumptions, it follows that the moments of $Y^{i,N,c}$ are bounded and that $Y^{i,N,c}$ converges strongly to the solution of SDE (\ref{delayPS}), with strong order 1/2; see Lemma \ref{lem_coarse} below.

In the sequel, $\delta$ will correspond to the mesh-size of a coarse time discretisation, where the elements of the time-grid are denoted by $t_0, t_1, \ldots, t_M$. The corresponding fine time-grid will additionally include the elements $t_{1/2}, t_{3/2}, \ldots, t_{M-1/2}$, where $t_{n + 1/2}:= \left(n + 1/2 \right) \delta$ for $n \in \lbrace 0, \ldots, M-1 \rbrace$. Hence, we will make use of the notation $\delta W_n := W(t_{n+1/2}) - W(t_{n})$ and $\delta W_{n+1/2} := W(t_{n+1}) - W(t_{n+1/2})$. Analogously, we can define $\delta W_{n- \tau}$ and $\delta W_{n+1/2 - \tau}$.

On a refined mesh with step-size $\delta/2$, we first introduce two discrete processes $Y^{i,N,f}$ and $Y^{i,N,a}$
\begin{align}\label{antithetic:f}
& Y^{i,N,f}({t_{n+1/2}}) = \mathscr{S}\left(Y^{i,N,f}({t_{n}}), Y^{i,N,f}({t_{n}-\tau}), Y^{i,N,f}({t_{n}- 2\tau}), \mu_{t_n}^{Y,N,f}, \delta/2, \delta W^{i}_n, \delta W^{i}_n  \delta W^{i}_{n- \tau} \right), \notag \\
&Y^{i,N,f}({t_{n+1}}) \\
& \;\; = \mathscr{S}\left(Y^{i,N,f}({t_{n+1/2}}), Y^{i,N,f}({t_{n+1/2}\!-\!\tau}), Y^{i,N,f}({t_{n+1/2}\!-\! 2\tau}), \mu_{t_{n+1/2}}^{Y,N,f}, \delta/2, \delta W^{i}_{n+1/2}, \delta W^{i}_{n+1/2}  \delta W^{i}_{n+1/2- \tau} \right) \notag
\end{align}
for $n \in \lbrace 0, \ldots, M-1 \rbrace$, with 
\begin{equation*}
 \mu_{t'}^{Y,N,f}(\mathrm{d}x) := \frac{1}{N} \sum_{j=1}^{N} \delta_{Y^{j,N,f}({t'})}(\mathrm{d}x), \text{ where } t'=t_n, \ t_{n+1/2},
\end{equation*}
and then the antithetic version of this process
\begin{align}\label{antithetic:a}
Y^{i,N,a}({t_{n+1/2}}) &= \mathscr{S}\left(Y^{i,N,a}({t_{n}}), Y^{i,N,a}({t_{n}\!-\!\tau}), Y^{i,N,a}({t_{n}\!-\! 2\tau}), \mu_{t_{n}}^{Y,N,a}, \delta/2, \delta W^{i}_{n+1/2}, \delta W^{i}_{n+1/2}  \delta W^{i}_{n+1/2- \tau} \right), \\
Y^{i,N,a}({t_{n+1}}) &= \mathscr{S}\left(Y^{i,N,a}({t_{n+1/2}}), Y^{i,N,a}({t_{n+1/2}\!-\!\tau}), Y^{i,N,a}({t_{n+1/2}\!-\! 2\tau}), \mu_{t_{n+1/2}}^{Y,N,a}, \delta/2, \delta W^{i}_{n}, \delta W^{i}_{n}  \delta W^{i}_{n- \tau} \right) \notag
\end{align}
with
\begin{equation*}
  \mu_{t'}^{Y,N,a}(\mathrm{d}x) := \frac{1}{N} \sum_{j=1}^{N} \delta_{Y^{j,N,a}({t'})}(\mathrm{d}x), \text{ where } t'=t_n, \ t_{n+1/2}.
\end{equation*}

We now take the average of $Y^{i,N,f}$ and $Y^{i,N,a}$ and define for all {\color{black}$n \in \lbrace 0, \ldots, M \rbrace$}
\begin{equation}
\label{Y_bar}
\overline{Y}^{i,N,f}(t_{n}) := \frac{Y^{i,N,f}({t_{n}})+Y^{i,N,a} ({t_{n}})}{2}.
\end{equation}
Similarly, we define estimators to be used in conjunction with \eqref{eqn:MLML} as follows:
\begin{eqnarray*}
\phi_N^l := \frac{1}{N} \sum_{j=1}^N \tfrac{P(Y^{j,N,f}(T)) + P(Y^{j,N,a}(T))}{2}, \qquad
\varphi_N^{l-1} := \frac{1}{N} \sum_{j=1}^N P(Y^{j,N,c}(T)).
\end{eqnarray*}


First, we give a lemma on stability and strong convergence of the coarse grid solution as a consequence of the proofs of the results stated in Section \ref{Section:Sec2}.
\begin{lem}\label{lem_coarse}
Let Assumptions ({\bf AAD}$_b^1$)--({\bf AAD}$_b^3$) and ({\bf AAD}$_\si^1$)--({\bf AAD}$_\si^2$) hold. For $n \in \lbrace 0, \ldots, M \rbrace$, let $Y^{i,N,c}(t_n)$ be defined as above and $X^{i,N}(t_n)$ be given by (\ref{delayPS}). Then, for any $p  \geq 2$ there exists a constant $C>0$ such that  
\begin{align*}
& \max_{i \in \mathbb{S}_N} \max_{n \in \lbrace 0, \ldots, M \rbrace}  \mathbb{E} \left[|Y^{i,N,c}(t_n)|^{p} \right] \leq C, \qquad \max_{i \in  \mathbb{S}_N} \max_{n \in \lbrace 0, \ldots, M \rbrace} \mathbb{E}[|Y^{i,N,c}(t_n) - X^{i,N}(t_n)|^{p}] \leq C \delta^{p/2}. 
\end{align*}
\end{lem}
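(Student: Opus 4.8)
The plan is to deduce both assertions from the machinery developed for Theorem \ref{THDelay:THDelay4} in Section \ref{Section:Sec3}, after observing that the coarse scheme \eqref{eq:coarsepath} differs from the tamed Milstein scheme of Section \ref{subsec:milstein} only in that the delay double integral $A^{i}_n := \int_{t_n}^{t_{n+1}}\int_{t_n}^{s}\d W^{i}_{u-\tau}\,\d W^{i}_s$ is replaced by the product $\tfrac12\Delta W^{i}_n\Delta W^{i}_{n-\tau}$. Since $\Delta W^{i}_{n-\tau}$ and each increment $W^{i}(s-\tau)-W^{i}(t_n-\tau)$, $s\in[t_n,t_{n+1}]$, are $\mathcal{F}_{t_n}$-measurable (here one uses $\delta\le\tau$, which holds because $\tau=M'\delta$ with $M'>1$), the substituted term has all $L^p$-moments of order $\delta$, exactly as $A^{i}_n$ does. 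Hence the a priori moment bound is obtained essentially verbatim as in the proof of Theorem \ref{THDelay:THDelay4}, and in fact more directly, since here $\sigma$ is globally Lipschitz by ({\bf AAD}$_\sigma^1$) with bounded second derivatives by ({\bf AD}$_\sigma^2$): the taming controls the drift contribution, these two assumptions control the Milstein corrections, and a discrete Gronwall argument closes the estimate uniformly in $\delta$ and $N$.

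For the strong rate I would set $e^{i}(t_n):=Y^{i,N,c}(t_n)-X^{i,N}(t_n)$ and It\^o--Taylor expand the particle SDE \eqref{delayPS} over each step, subtracting the scheme \eqref{eq:coarsepath}. The drift discretisation (including the taming error $b_\dd-b$) and the correctly represented diffusion and $\partial_{x_1}\sigma$ Milstein terms produce one-step residuals of order $\delta$ with the usual structure, handled exactly as in Section \ref{Section:Sec3}. The only genuinely new contribution is the L\'evy-area replacement error
\begin{equation*}
R^{i}_n := g^{i}_n\Big(A^{i}_n-\tfrac12\Delta W^{i}_n\Delta W^{i}_{n-\tau}\Big),\qquad g^{i}_n := \sigma\big(Y^{i,N,c}(t_n-\tau),Y^{i,N,c}(t_n-2\tau)\big)\,\partial_{x_2}\sigma\big(Y^{i,N,c}(t_n),Y^{i,N,c}(t_n-\tau)\big),
\end{equation*}
where $g^{i}_n$ is $\mathcal{F}_{t_n}$-measurable. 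The decisive observation is that, writing $A^{i}_n-\tfrac12\Delta W^{i}_n\Delta W^{i}_{n-\tau}=\int_{t_n}^{t_{n+1}}\big(W^{i}(s-\tau)-W^{i}(t_n-\tau)-\tfrac12\Delta W^{i}_{n-\tau}\big)\,\d W^{i}(s)$ with an $\mathcal{F}_{t_n}$-measurable integrand, this increment is conditionally centred, $\E[A^{i}_n-\tfrac12\Delta W^{i}_n\Delta W^{i}_{n-\tau}\mid\mathcal{F}_{t_n}]=0$, while its conditional $L^2$-norm is of order $\delta$.

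Consequently $\sum_n R^{i}_n$ is a martingale whose differences have conditional $L^2$-size $\mathcal{O}(\delta)$, and the Burkholder--Davis--Gundy inequality together with the moment bound on $g^{i}_n$ gives
\begin{equation*}
\E\Big[\max_{m\le M}\Big|\sum_{n=0}^{m-1}R^{i}_n\Big|^{p}\Big] \le C\,\E\Big[\Big(\sum_{n=0}^{M-1}(R^{i}_n)^2\Big)^{p/2}\Big] \le C\,(M\delta^2)^{p/2} = C\,\delta^{p/2},
\end{equation*}
so the accumulated L\'evy-area error is of order $\delta^{1/2}$ in $L^p$, which is precisely what caps the global rate at one half. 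All remaining contributions are $\mathcal{O}(\delta)$, and the measure dependence of the drift is absorbed through $\mathcal{W}_2(\mu^{Y,N,c}_{t_n},\mu^{X,N}_{t_n})^2\le\frac1N\sum_{j}|e^{j}(t_n)|^2$ using the Lipschitz-in-measure bound for $b$. Squaring the error recursion, taking expectations, invoking the martingale estimate above for the stochastic integrals and the L\'evy term, and using exchangeability to identify $\frac1N\sum_j\E|e^{j}|^2$ with $\E|e^{i}|^2$, a discrete Gronwall inequality yields $\max_i\max_n\E|e^{i}(t_n)|^2\le C\delta$; the general $L^p$-estimate follows by the same argument with BDG applied in $L^p$.

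The main obstacle is to extract the square-root cancellation from the L\'evy-area replacement: a naive pathwise bound would sum $M=T/\delta$ one-step errors of size $\delta$ and only deliver $\mathcal{O}(1)$. The point is to recognise and use the conditional martingale structure of $R^{i}_n$ -- namely that $g^{i}_n$ is $\mathcal{F}_{t_n}$-measurable and that $A^{i}_n-\tfrac12\Delta W^{i}_n\Delta W^{i}_{n-\tau}$ is conditionally centred and $\mathcal{O}(\delta)$ in $L^2$ -- so that BDG turns $M$ increments of size $\delta$ into a global error of size $\sqrt{M}\,\delta=\delta^{1/2}$ rather than $M\delta$. A secondary technical point is that the condition $\delta\le\tau$ must be used to guarantee that the delayed increments entering $A^{i}_n$ are $\mathcal{F}_{t_n}$-measurable, which is what makes the conditioning argument available; keeping track of the delayed segment values at $t_n-\tau$ and $t_n-2\tau$ and their coupling to $X^{i,N}$ requires the same delayed-step bookkeeping as in Section \ref{Section:Sec3}.
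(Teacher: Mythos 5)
Your proposal is correct, and in fact it supplies an argument that the paper itself omits: Lemma \ref{lem_coarse} is stated without a proof environment, prefaced only by the remark that it is ``a consequence of the proofs of the results stated in Section \ref{Section:Sec2}.'' Those results, however, concern the scheme with the exact double integral $\int_{t_n}^{t_{n+1}}\int_{t_n}^{s}\d W^{i}_{u-\tau}\,\d W^{i}_s$, so the one genuinely new point is precisely the one you isolate: the replacement error $A^{i}_n-\tfrac12\Delta W^{i}_n\Delta W^{i}_{n-\tau}=\int_{t_n}^{t_{n+1}}\bigl(W^{i}(s-\tau)-W^{i}(t_n-\tau)-\tfrac12\Delta W^{i}_{n-\tau}\bigr)\,\d W^{i}(s)$ has an $\mathcal{F}_{t_n}$-measurable integrand (this uses $\delta<\tau$, guaranteed by $M'>1$), hence is conditionally centred with one-step $L^2$-size $\delta/2$, and the martingale/BDG summation converts $M$ such increments into a global $\mathcal{O}(\delta^{1/2})$ contribution rather than $\mathcal{O}(1)$ --- exactly why the coarse path is order $1/2$ while all other residuals remain of order one. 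Your identification of the measurability, the conditional centring, and the square-root cancellation is the same mechanism used for the antithetic analysis in \cite{MGLS} and is what the authors implicitly rely on. Two minor points: the stated lemma has $\mathbb{E}[\max_n|Y^{i,N,c}(t_n)|^p]$ with the maximum inside the expectation, which is slightly stronger than the $\max_n\mathbb{E}[\cdot]$ bound of Theorem \ref{THDelay:THDelay4}, but the upgrade is routine here since $\sigma$ is globally Lipschitz and the drift is tamed (BDG on the diffusion martingale); and for the super-linear drift one should, as in Section \ref{Section:Sec3}, run the error recursion on $|e^i|^2$ via the one-sided Lipschitz condition rather than on $e^i$ directly, which your sketch does implicitly when ``squaring the error recursion.''
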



The following main result reveals the first order convergence of the difference between fine and coarse grid approximation for the antithetic scheme.
\begin{thm}\label{TheoremAntithetic}
Let Assumptions ({\bf AAD}$_b^1$)--({\bf AAD}$_b^3$) and ({\bf AAD}$_\si^1$)--({\bf AAD}$_\si^2$) hold. For $n \in \lbrace 0, \ldots, M \rbrace$, let $\overline{Y}^{i,N,f}(t_{n})$ be defined as in \eqref{Y_bar} and $Y^{i,N,c}(t_{n})$ be the coarse path approximation, defined as in (\ref{eq:coarsepath}). Then, there exists a constant $C>0$ such that 
\begin{align}
\label{main_antith}
\max_{i \in \mathbb{S}_N} \max_{n \in \lbrace 0, \ldots, M \rbrace} \mathbb{E} [|\overline{Y}^{i,N,f}(t_{n}) - Y^{i,N,c}(t_{n})|^2] \leq C \delta^{2}. 
\end{align}
\end{thm}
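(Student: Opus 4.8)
The plan is to show that the antithetic average $\overline{Y}^{i,N,f}$, built from two half-steps of the scheme $\mathscr{S}$ with the two Brownian increments swapped, matches the coarse solution $Y^{i,N,c}$ to second order in $\delta$ because the troublesome L\'evy-area-type terms cancel on average. The starting observation is that over a coarse step $[t_n,t_{n+1}]$, the coarse scheme uses the symmetrised product $\tfrac12 \Delta W^i_n \Delta W^i_{n-\tau}$ in place of the true iterated integral $\int_{t_n}^{t_{n+1}}\int_{t_n}^{s}\d W^i_{u-\tau}\,\d W^i_s$, whereas the fine and antithetic paths each take two genuine half-step iterated integrals. First I would write $\Delta W^i_n = \delta W^i_n + \delta W^i_{n+1/2}$ and similarly for the delayed increment, and expand both half-steps of $Y^{i,N,f}$ (and of $Y^{i,N,a}$, in which the roles of $\delta W^i_n$ and $\delta W^i_{n+1/2}$ are interchanged) via Taylor expansion of $\mathscr S$ around the coarse point $Y^{i,N,c}(t_n)$. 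The key algebraic fact is that when one adds $Y^{i,N,f}$ and $Y^{i,N,a}$ and divides by two, the first-order-in-$\delta$ odd terms (those linear in a single half-increment, and the antisymmetric cross terms $\delta W^i_n\,\delta W^i_{n+1/2-\tau} - \delta W^i_{n+1/2}\,\delta W^i_{n-\tau}$ carrying the L\'evy-area information) cancel, leaving only symmetric quantities that reproduce, up to $\mathcal O(\delta^2)$, the coarse increment $\mathscr S(\dots,\tfrac12\Delta W^i_n\Delta W^i_{n-\tau})$.

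The technical backbone would be a one-step error analysis followed by a discrete Gr\"onwall argument. Concretely, I would define the one-step discrepancy $D_n := \overline{Y}^{i,N,f}(t_{n+1}) - Y^{i,N,c}(t_{n+1}) - \big(\overline{Y}^{i,N,f}(t_n) - Y^{i,N,c}(t_n)\big)$ and decompose it, via the Taylor expansions above, into a part that is $\mathcal F_{t_n}$-conditionally mean-zero (the martingale-increment part, controlling the variance) and a conditional-mean part (the bias part). For the martingale part I would use the Burkholder--Davis--Gundy inequality together with the Lipschitz and polynomial-growth bounds on $\sigma$, $\partial_{x_1}\sigma$, $\partial_{x_2}\sigma$ from ({\bf AAD}$_\sigma^1$), ({\bf AD}$_\sigma^2$) and the $C^2$ boundedness of second derivatives, showing each such increment contributes $\mathcal O(\delta^2)$ to the conditional second moment; for the conditional-mean (bias) part the crucial point is that the leading $\mathcal O(\delta)$ contributions vanish by the antithetic cancellation, so the residual is $\mathcal O(\delta^2)$. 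Summing over $n$ and invoking the uniform moment bounds of Lemma~\ref{lem_coarse} (to absorb the polynomial-growth factors via H\"older's inequality) would give the recursion
\begin{equation*}
\mathbb E\big[|\overline{Y}^{i,N,f}(t_{n+1}) - Y^{i,N,c}(t_{n+1})|^2\big]
\le (1 + C\delta)\,\mathbb E\big[|\overline{Y}^{i,N,f}(t_{n}) - Y^{i,N,c}(t_{n})|^2\big] + C\delta^3,
\end{equation*}
from which the discrete Gr\"onwall lemma yields the desired $\mathcal O(\delta^2)$ bound uniformly in $n$.

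A point requiring care is the treatment of the measure arguments: the drift and the empirical measures $\mu_{t_n}^{Y,N,f}$, $\mu_{t_n}^{Y,N,a}$, $\mu_{t_n}^{Y,N,c}$ differ across the three paths, so one must control the Wasserstein distances $\mathcal W_2(\mu^{Y,N,f}_{t_n},\mu^{Y,N,c}_{t_n})$ etc.\ by $\tfrac1N\sum_j |Y^{j,N,f}(t_n)-Y^{j,N,c}(t_n)|^2$ and close the argument over all particles simultaneously, taking the maximum over $i\in\mathbb S_N$ at the end; this is why the bound must be proven for the whole system rather than a single particle. The main obstacle I anticipate is organising the Taylor expansion of the nested two-half-step map so that the antithetic cancellation of the first-order delay/L\'evy terms is made rigorous and the $\mathcal O(\delta^2)$ remainder is uniformly controlled, in particular bookkeeping the delayed increments $\delta W^i_{n-\tau}$ and the terms evaluated at $t_n-\tau$ and $t_n-2\tau$ (which themselves depend on fine/coarse/antithetic histories) and ensuring that the quadratic-variation bracket of the delay Brownian motion against the current one behaves as expected. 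The deterministic time-regularity ({\bf H}$_1$) of the initial segment and the uniform moment bounds are what allow these delayed contributions to be handled without loss of order.
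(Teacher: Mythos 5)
Your overall strategy coincides with the paper's: represent each of the two half-step compositions as the coarse one-step map plus remainders, use the antithetic average to cancel the antisymmetric L\'evy-area combination $\delta W^i_n\,\delta W^i_{n+1/2-\tau}-\delta W^i_{n-\tau}\,\delta W^i_{n+1/2}$, split the one-step discrepancy into a conditionally centred (martingale) part and a bias part, and close with a one-step recursion (this is exactly the content of Lemmas \ref{lemma:differenceTimestep}--\ref{AntitheticLemma2} and the final proof in Section \ref{panti}). Two concrete points, however, would need repair. First, the recursion you state is incomplete: since $\sigma$ and $\partial_{x_2}\sigma$ are evaluated at $t_n-\tau$ and $t_n-2\tau$, the Lipschitz estimates necessarily produce an additional term $C\delta\,\mathbb E\big[|\overline Y^{i,N,f}(t_n-\tau)-Y^{i,N,c}(t_n-\tau)|^2\big]$ on the right-hand side, so a single application of the discrete Gr\"onwall lemma does not close the argument. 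The paper instead iterates the recursion and inducts over the subintervals $[0,\tau],[\tau,2\tau],\dots$: on $[0,\tau]$ the delayed error vanishes (deterministic initial segment), and on each later subinterval the delayed error is already known to be $\mathcal O(\delta^2)$. You flag the delayed evaluations as an obstacle but do not supply this mechanism, and as written your Gr\"onwall step would not go through.

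Second, the antisymmetric L\'evy combination does \emph{not} cancel exactly in the average, because in $Y^{i,N,f}$ and $Y^{i,N,a}$ it is multiplied by $\sigma\,\partial_{x_2}\sigma$ evaluated at the two \emph{different} path states; what survives is (coefficient difference) times (antisymmetric combination), the term $\tilde M^{(4)}_{i,n}$ in the paper. To see that this is a martingale increment with second moment $\mathcal O(\delta^3)$ one needs the a priori bound $\mathbb E[|Y^{i,N,f}(t_n)-Y^{i,N,a}(t_n)|^p]\le C\delta^{p/2}$ (Lemma \ref{lemma:antitheticLemma2}), which your proposal never isolates; the same bound is what renders the symmetric second-order Taylor remainders, of the form $\partial_x^2 b_1(\xi)\,(Y^{i,N,f}(t_n)-Y^{i,N,a}(t_n))^2\,\delta$, of order $\delta^2$ in $L^2$. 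Relatedly, your statement that each martingale increment contributes $\mathcal O(\delta^2)$ to the conditional second moment is one order too weak: summing $M\sim\delta^{-1}$ such increments would then only give $\mathcal O(\delta)$; each must contribute $\mathcal O(\delta^3)$, which is what the $C\delta^3$ in your own recursion presupposes. With these corrections your argument matches the paper's proof.
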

\begin{proof}
The proof is deferred to Section \ref{panti}.
\end{proof}

\subsection{MLMC complexity analysis}\label{sec:complex}

To investigate the cost of the proposed MLMC estimator in combination with the antithetic approach, we state for the reader's convenience an adaptation of the complexity result of \cite{AT} to our setting.

\begin{prp}[cf.\ Theorem 3.1 in \cite{AT} ] \label{TH:MLMCCali}
For every $(\tilde{L},l) \in (\mathbb{N}\cup \{0\})^2$, let $N=\tilde{\beta}^{\tilde{L}}$ for
some $\tilde{\beta}>0$,
and $\phi_{N}^l$, $\varphi_{N}^{l-1}$ be approximations of the random variable $P$.
For $k \in \{0,\ldots,K_l\}$, let $\phi_{N}^{l}(\boldsymbol{\omega}^{(l,k)}_{1:N})$ and $\varphi_{N}^{l-1}(\boldsymbol{\omega}^{(l,k)}_{1:N})$ be the $k$-th realisations
of $\phi_{N}^l$ and $\varphi_{N}^{l-1}$, respectively.
Consider the MLMC estimator 
\begin{equation*}
\mathcal{A}_{\text{MLMC}}(\tilde{L},L) = \sum_{l=0}^{L} \frac{1}{K_l} \sum_{k=1}^{K_l} \left(\phi_{N}^{l}(\boldsymbol{\omega}^{(l,k)}_{1:N}) - \varphi_{N}^{l-1}(\boldsymbol{\omega}^{(l,k)}_{1:N})\right),
\end{equation*}
with $\varphi_{N}^{-1} =0$ and for $\beta, w, \gamma, s, \tilde{\gamma}, \tilde{w}, \tilde{c} >0$, where $s \leq 2w$, assume the following:
\begin{enumerate}
\item[(i)] $\left | \mathbb{E}\left[P - \varphi_{N}^{l} \right] \right | = \mathcal{O}\left(\tilde{\beta}^{-\tilde{w} \tilde{L}} + \beta^{-wl} \right)$ 
\item[(ii)] $\mathbb{E}\left[ \phi_{N}^{l} \right] = \mathbb{E}\left[  \varphi_{N}^{l} \right]$
\item[(iii)] $\mathbb{V}\left[\phi_{N}^{l} - \varphi_{N}^{l-1}  \right] = \mathcal{O}\left(\tilde{\beta}^{-\tilde{c}\tilde{L}} \beta^{-sl} \right)$
\item[(iv)] $\text{Work} \left[\phi_{N}^{l} - \varphi_{N}^{l-1}  \right] = \mathcal{O} \left(\tilde{\beta}^{\tilde{\gamma}\tilde{L}} \beta^{\gamma l} \right)$.
\end{enumerate}
Then, for any $\varepsilon < e^{-1}$, there exist $\tilde{L}, L$ and a sequence of $(K_l)_{l \in \lbrace 0, \ldots, L \rbrace }$, such that
\begin{equation*}
MSE := \mathbb{E} \left[ (\mathcal{A}_{\text{MLMC}}(\tilde{L},L) -\mathbb{E}[P])^2 \right] \leq \varepsilon^2,
\end{equation*}
and 
\begin{align*}
\text{Work}\left[\mathcal{A}_{\text{MLMC}}(\tilde{L},L) \right] &:= \sum_{l=0}^{L} K_l \text{Work} \left[\phi_{N}^{l} - \varphi_{N}^{l-1}  \right] 
= 
\begin{cases}
\mathcal{O}\left( \varepsilon^{-2 - \frac{\tilde{\gamma} - \tilde{c}}{\tilde{w}}} \right), & \text{ if } s > \gamma, \\
\mathcal{O}\left( \varepsilon^{-2 - \frac{\tilde{\gamma} - \tilde{c}}{\tilde{w}} } \log^2( \varepsilon) \right), & \text{ if } s = \gamma, \\
\mathcal{O}\left( \varepsilon^{-2 - \frac{\tilde{\gamma} - \tilde{c}}{\tilde{w}} - \frac{\gamma-s}{w}}  \right), & \text{ if } s < \gamma.
\end{cases}
\end{align*}
\end{prp}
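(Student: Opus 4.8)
The plan is to follow the standard mean-square-error decomposition for multi-level estimators, treating $\tilde{L}$ (equivalently the particle number $N = \tilde{\beta}^{\tilde{L}}$) and the finest time-discretisation level $L$ as the parameters controlling the bias, and the per-level sample numbers $(K_l)$ as the parameters controlling the statistical error. Since the realisations across levels and samples are independent, I would first write
\[
MSE = \left(\mathbb{E}[\mathcal{A}_{\text{MLMC}}(\tilde{L},L)] - \mathbb{E}[P]\right)^2 + \mathbb{V}[\mathcal{A}_{\text{MLMC}}(\tilde{L},L)].
\]
Invoking the telescoping identity of assumption (ii) together with $\varphi_{N}^{-1}=0$, the expectation of the estimator collapses to $\mathbb{E}[\varphi_{N}^{L}]$, so assumption (i) gives a squared bias of order $\mathcal{O}(\tilde{\beta}^{-2\tilde{w}\tilde{L}} + \beta^{-2wL})$. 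I would then choose $\tilde{L}$ and $L$ as the smallest integers making the total bias at most $\varepsilon/\sqrt{2}$ in modulus (hence squared bias $\le \varepsilon^2/2$); this forces $\tilde{\beta}^{\tilde{L}} = \mathcal{O}(\varepsilon^{-1/\tilde{w}})$ and $\beta^{L} = \mathcal{O}(\varepsilon^{-1/w})$ and leaves the budget $\varepsilon^2/2$ for the variance.

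For the statistical error I would use independence to write $\mathbb{V}[\mathcal{A}_{\text{MLMC}}(\tilde{L},L)] = \sum_{l=0}^{L} K_l^{-1} V_l$, with $V_l := \mathbb{V}[\phi_{N}^{l} - \varphi_{N}^{l-1}] = \mathcal{O}(\tilde{\beta}^{-\tilde{c}\tilde{L}}\beta^{-sl})$ from (iii), and minimise the total work $\sum_{l=0}^{L} K_l W_l$, where $W_l := \text{Work}[\phi_{N}^{l} - \varphi_{N}^{l-1}] = \mathcal{O}(\tilde{\beta}^{\tilde{\gamma}\tilde{L}}\beta^{\gamma l})$ by (iv), subject to $\sum_{l=0}^{L} K_l^{-1} V_l \le \varepsilon^2/2$. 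A Lagrange-multiplier (equivalently Cauchy--Schwarz) argument yields the classical choice $K_l = \big\lceil 2\varepsilon^{-2}\sqrt{V_l/W_l}\,\sum_{j=0}^{L}\sqrt{V_j W_j}\,\big\rceil$, for which the constrained minimal work is
\[
\text{Work}[\mathcal{A}_{\text{MLMC}}(\tilde{L},L)] = \mathcal{O}\Big(\varepsilon^{-2}\big(\textstyle\sum_{l=0}^{L}\sqrt{V_l W_l}\big)^2\Big).
\]

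The decisive computation is then the geometric sum $\sum_{l=0}^{L}\sqrt{V_l W_l} = \mathcal{O}\big(\tilde{\beta}^{(\tilde{\gamma}-\tilde{c})\tilde{L}/2}\sum_{l=0}^{L}\beta^{(\gamma-s)l/2}\big)$, whose inner factor separates into exactly the three regimes of the statement: it is $\mathcal{O}(1)$ when $s>\gamma$, $\mathcal{O}(L)=\mathcal{O}(\log\varepsilon^{-1})$ when $s=\gamma$, and $\mathcal{O}(\beta^{(\gamma-s)L/2})=\mathcal{O}(\varepsilon^{-(\gamma-s)/(2w)})$ when $s<\gamma$. Substituting $\tilde{\beta}^{\tilde{L}}=\mathcal{O}(\varepsilon^{-1/\tilde{w}})$ into the prefactor to obtain $\tilde{\beta}^{(\tilde{\gamma}-\tilde{c})\tilde{L}}=\mathcal{O}(\varepsilon^{-(\tilde{\gamma}-\tilde{c})/\tilde{w}})$ and squaring the sum reproduces the three stated complexities. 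I expect the principal difficulty to be bookkeeping rather than conceptual: I must confirm that rounding each $K_l$ up to an integer adds only the lower-order term $\sum_{l=0}^{L} W_l$ to the work (negligible against the optimised bound), that the structural hypothesis $s \le 2w$ keeps the implied optimal $K_l$ bounded below by $1$ at the finest levels so that no level is under-sampled, and that the constants hidden in the $\mathcal{O}(\cdot)$ of (i)--(iv) can be absorbed into a single final constant uniformly over $\varepsilon < e^{-1}$.
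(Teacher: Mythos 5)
You should first be aware that the paper itself contains no proof of this proposition: it is imported, with notational adaptation, from Theorem 3.1 of the cited reference \cite{AT}, and the surrounding text only uses its conclusion together with the optimal allocation $K_l = \mathcal{O}(\varepsilon^{-1}2^{-3l/2})$ extracted from that external proof. Your reconstruction --- the bias/variance decomposition of the MSE, the telescoping identity from (ii) and $\varphi_N^{-1}=0$ so that (i) controls the bias and fixes $\tilde{\beta}^{\tilde{L}}=\mathcal{O}(\varepsilon^{-1/\tilde{w}})$, $\beta^{L}=\mathcal{O}(\varepsilon^{-1/w})$, the Lagrange/Cauchy--Schwarz optimisation of $\sum_l K_l W_l$ under $\sum_l V_l/K_l\le\varepsilon^2/2$ yielding work $\mathcal{O}\bigl(\varepsilon^{-2}\bigl(\sum_l\sqrt{V_lW_l}\bigr)^2\bigr)$, and the three-regime geometric sum --- is exactly the standard Giles-type argument underlying the cited theorem, so in approach you coincide with the paper's (external) source and your main computation correctly reproduces all three stated complexity regimes.

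The item you defer as ``bookkeeping'', however, is the one genuine gap, and it cannot be closed from hypotheses (i)--(iv) alone. Since every level must be simulated at least once, the ceiling in $K_l$ forces the additional cost
\begin{equation*}
\sum_{l=0}^{L} W_l = \mathcal{O}\left(\tilde{\beta}^{\tilde{\gamma}\tilde{L}}\beta^{\gamma L}\right) = \mathcal{O}\left(\varepsilon^{-\tilde{\gamma}/\tilde{w}-\gamma/w}\right),
\end{equation*}
which is an unavoidable lower bound on the work of any estimator using all $L+1$ levels. Comparing exponents with the claimed bounds, this forced cost is dominated precisely when $\tilde{c}/\tilde{w}+\min(s,\gamma)/w\le 2$: the hypothesis $s\le 2w$ controls only the time-discretisation part ($\gamma/w<2$ when $s>\gamma$, etc.), while nothing in (i)--(iv) controls $\tilde{c}/\tilde{w}$; for instance, with $\tilde{c}$ large relative to $\tilde{\gamma}$ and $\tilde{w}$ the claimed bound would fall below the cost of a single sample at the finest level and is then unattainable. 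So your assertion that the rounding term is ``negligible against the optimised bound'' is not a routine verification but an additional parameter restriction, implicit in the setting of \cite{AT} and satisfied (with equality of exponents) in the regime the paper actually invokes, namely $s=2$, $\gamma=w=\tilde{w}=\tilde{c}=1$, $\tilde{\gamma}=p+1$, where both the optimised term and the forced term are $\mathcal{O}(\varepsilon^{-2-p})$. Relatedly, your gloss on the role of $s\le 2w$ is slightly off: it does not keep the optimal real-valued $K_l$ above $1$ (at deep levels it typically is below $1$); rather, it is exactly what keeps the cost of the rounded-up single samples from overtaking the optimised work in the time direction.
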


We apply this result in the setting of the antithetic approach 
with a fixed number of particles $N = 2^{\tilde{L}}$ across all levels and $M_l = 2^l$ time-steps on level $l$ for
$l \in \lbrace 0, \ldots, L \rbrace$.

For sufficiently regular payoff functions $P$ (e.g., twice continuously differentiable with bounded first and second derivatives),
it follows as in 
Theorem \ref{TheoremAntithetic} that $\mathbb{E} [|\phi^l_N - \varphi^{l-1}_N|^2] \leq C \delta_l^{2}$
(see also \cite[Lemma 2.2]{MGLS}).
We now assume instead the stronger, multiplicative bound
\begin{align}
\label{ass_antith}
\mathbb{E} [|\phi^l_N - \varphi^{l-1}_N|^2] \leq C \frac{\delta_l^{2}}{N}.
\end{align}
This bound is consistent with our numerical tests (see Figure \ref{fig:StrongOrder1Antithetic2}, top right) and is also the one assumed in \cite{AT}, but the proof 
(in the present context) is elusive so far.

Then, in the setting of Proposition \ref{TH:MLMCCali}, $s=2$ and $\gamma=1$, assuming $P$ to be regular enough.
Hence, for a given $\varepsilon>0$,
from $w=1$ we choose the number of levels $L = \mathcal{O}(\log(\varepsilon^{-1})/\log(2))$, and
assuming $\tilde{w}=1$, we choose
the number of particles across all levels $N = 2^L$ (i.e., $\tilde{L} = L$).

Following \cite{LSAT}, we denote now by $p=0,1$ the order of interaction in the particle system, i.e., the cost required to compute all interaction terms is of order $N^{p+1}$, and $\tilde{\gamma}=p+1$, $\gamma = 1$ above.
The proof of Theorem 3.1 in \cite{AT} then reveals that the optimal number of particle systems per level is $K_l = \mathcal{O}(\varepsilon^{-1}  2^{-3l/2})$.
Note that the implied constants do not depend on $l$.

From Proposition \ref{TH:MLMCCali}, we hence deduce the overall computational cost as follows. 
\begin{cor}\label{cor2}
Under the assumption of \eqref{ass_antith},
the optimal complexity of the antithetic multilevel estimator for MSE $\varepsilon^2$ is of order $\varepsilon^{-2-p}$, for interaction terms of order $p$.
\end{cor}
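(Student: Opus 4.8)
The plan is to read Corollary \ref{cor2} off Proposition \ref{TH:MLMCCali} by fixing the seven parameters $\beta,w,\gamma,s,\tilde\gamma,\tilde w,\tilde c$ to the values forced by the antithetic construction and then checking hypotheses (i)--(iv). From $M_l=2^l$ and $N=2^{\tilde L}$ I would take $\beta=\tilde\beta=2$, and identify $w=1$ and $\tilde w=1$ (the weak orders in the time-step and in the particle number), $\gamma=1$ and $\tilde\gamma=p+1$ (the work exponents), and $s=2$, $\tilde c=1$ (the variance decay). Then $s=2\le 2w=2$ is admissible and $s=2>1=\gamma$, so only the first branch of the work estimate of Proposition \ref{TH:MLMCCali} is relevant, and it yields
\[
\mathrm{Work}\big[\mathcal{A}_{\text{MLMC}}\big]=\mathcal{O}\!\left(\vv^{-2-\frac{\tilde\gamma-\tilde c}{\tilde w}}\right)=\mathcal{O}\!\left(\vv^{-2-((p+1)-1)}\right)=\mathcal{O}\!\left(\vv^{-2-p}\right),
\]
which is the assertion of the corollary; here $\tilde L=L=\mathcal{O}(\log_2\vv^{-1})$ is chosen so that the two bias contributions in (i) are each balanced against $\vv$.

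Next I would verify the four hypotheses. For (ii), the telescoping identity $\E[\phi_N^l]=\E[\varphi_N^l]$ holds because $Y^{i,N,f}$ and $Y^{i,N,a}$ are identically distributed --- the antithetic path only interchanges the two half-step increments fed into the map $\mathscr{S}$ --- so that $\E[\phi_N^l]=\E[\tfrac1N\sum_jP(Y^{j,N,f}(T))]$, and the fine path with step $\delta_l$ has, by construction through $\mathscr{S}$ with i.i.d.\ Gaussian increments, the same law as the coarse path $Y^{i,N,c}$ with step $\delta_l$ appearing at the next level. Hypothesis (iii) is precisely the standing assumption \eqref{ass_antith}: using $\mathbb{V}[\cdot]\le\E[|\cdot|^2]$ it gives $\mathbb{V}[\phi_N^l-\varphi_N^{l-1}]\le C\,2^{-2l}2^{-\tilde L}$, hence $s=2$ and $\tilde c=1$. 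Hypothesis (iv) follows from the cost structure: each of the $M_l=2^l$ time-steps evaluates the empirical-measure interaction terms of all $N$ particles at cost $\mathcal{O}(N^{p+1})$, so one realisation of the level-$l$ correction costs $\mathcal{O}(2^l N^{p+1})=\mathcal{O}(2^l2^{(p+1)\tilde L})$, i.e.\ $\gamma=1$, $\tilde\gamma=p+1$. Finally, (i) splits the estimator bias --- which by (ii) equals the bias of the finest level $\phi_N^L$ --- into a time-discretisation part $\mathcal{O}(\delta_L)=\mathcal{O}(2^{-wL})$, justified by the strong (hence weak) order one of Theorem \ref{TheoremAntithetic} and Lemma \ref{lem_coarse}, and a propagation-of-chaos part $\mathcal{O}(N^{-\tilde w})$ with $\tilde w=1$ traced back to Proposition \ref{Prop:PoCDelayMcKean}.

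The only genuinely nontrivial ingredient is hypothesis (iii), the multiplicative variance decay \eqref{ass_antith} carrying the factor $1/N$; as stressed in the text this bound is taken as an assumption and can so far only be confirmed numerically. Once it is granted, everything else is bookkeeping: matching the parameters above, choosing $\tilde L=L=\mathcal{O}(\log_2\vv^{-1})$, and invoking the first case of Proposition \ref{TH:MLMCCali}. The only points requiring care are that the regularity imposed on $P$ (twice continuously differentiable with bounded derivatives) supports both the weak rates in (i) and the unconditional bound $\E[|\phi_N^l-\varphi_N^{l-1}|^2]\le C\delta_l^2$ that underlies \eqref{ass_antith} --- the latter following from Theorem \ref{TheoremAntithetic} together with a second-order Taylor expansion of $P$ as in \cite[Lemma 2.2]{MGLS} --- and that $s\le 2w$ holds (here with equality), so that the quoted complexity formula applies.
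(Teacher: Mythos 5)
Your proposal is correct and follows essentially the same route as the paper: the paper's own justification is precisely the discussion preceding the corollary, which identifies $s=2$, $\gamma=w=\tilde w=\tilde c=1$, $\tilde\gamma=p+1$ and invokes the first case of Proposition \ref{TH:MLMCCali} to obtain $\mathcal{O}(\vv^{-2-(\tilde\gamma-\tilde c)/\tilde w})=\mathcal{O}(\vv^{-2-p})$. Your additional verification of hypotheses (i), (ii) and (iv) (telescoping via the identical law of $Y^{f}$ and $Y^{a}$, the bias split, and the cost accounting) is more explicit than what the paper writes out, but it is the same bookkeeping argument resting on the assumed multiplicative bound \eqref{ass_antith}.
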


For standard MLMC without the antithetic technique (i.e., $s=1$),
setting $K_l = \mathcal{O}(\varepsilon^{-1}(L+1)2^{-l})$ (with the same choices of $L$, $M_l$, and $N$), we derive that the overall cost is of order $\varepsilon^{-2-p} \log^2(\varepsilon)$. A plain MC approach gives order $\varepsilon^{-3-p}$, as $N,K$ and $M$ need to be chosen of order $\mathcal{O}(\varepsilon^{-1})$ to obtain the desired accuracy.

Proposition \ref{TH:MLMCCali} contrasts with the classical MLMC setting of \cite{MG} in that the multilevel decomposition of the estimator only acts in the index related to $l$, but not the one related to $\tilde{L}$. That is to say, in our setting, we vary the number of time-steps $M$ across levels, but not the number of particles $N$.
This leads to optimal complexity because the variance is assumed to decay in both $M$ and $N$ due to \eqref{ass_antith}.

In the setting without delay and for a constant diffusion coefficient, \cite{LSAT} proposes an antithetic scheme with respect to the number of particles -- as opposed to antithetic in the discrete-time paths as we do here -- and proves an additive variance bound of the form
$\mathcal{O}\left(\delta^{2} + 1/N^2 \right)$.
The order $1/N^2$ is an improvement over the order $1/N$ which is expected for propagation of chaos without antithetic sampling, while the order $\delta^{2}$
holds for the Euler--Maruyama scheme because it coincides with the Milstein scheme for constant diffusion coefficient.
This allows a similar complexity analysis as presented above; in particular, \cite[Theorem 6.3]{LSAT} shows how to choose a sequence $K_l$ in order to obtain a result  analogous to Corollary \ref{cor2}. 

To obtain a combined MLMC estimator, i.e., employing the antithetic approach proposed in our paper together with the antithetic scheme with respect to the number of particles, we would choose $M_l = N_l = 2^l$ and define {\color{black}
\begin{equation*}
\phi_{N_l}^{l} := \frac{1}{N_l} \sum_{j=1}^{N_l} \tfrac{P(Y^{j,N_l,f}(T)) + P(Y^{j,N_l,a}(T))}{2}, \qquad
\varphi_{N_l}^{l-1} := \frac{1}{N_l} \sum_{j=1}^{N_l} \tfrac{P(Y^{j,N_l,(1),c}(T)) + P(Y^{j,N_l,(2),c}(T))}{2}, 
\end{equation*}}
where $Y^{j,N_l,(1),c}(T)$ is a particle system of size $N_l/2$ and uses the first $N_l/2$ Brownian motions and initial data from the set $(W^{i},\xi^{i})_{i \in \lbrace 1, \ldots, N_l \rbrace}$, while $Y^{j,N_l,(2),c}(T)$ is the system associated with the other half. 

As commented above, such an estimator is expected to give a variance decay $\mathcal{O}\left(2^{-2 l} \right)$.
The optimal complexity under this assumption 
is the same as derived in Corollary \ref{cor2}. 

\section{Numerical results}\label{Section:Sec4}

We now present a number of numerical tests to illustrate the practical behaviour of the schemes proposed in this article. 
We use the canonical particle approximation \cite{BT} to the law $\mathcal{L}_{Y_{t_n}}$ at each time-step $t_n$ by its empirical distribution. For our numerical experiments, we used $N=10^3$, unless stated otherwise.

As we do not know the exact solution in the considered examples, the convergence rates with respect to the number of time-steps were determined by comparing two solutions (at time $T=1$) computed on a fine and coarse time grid, respectively, 
where the same Brownian increments were used for both. 
In order to assess the strong convergence in $\delta$, we thus compute the root-mean-square error (RMSE) 
\begin{equation*}
\text{RMSE}:= \sqrt{\frac{1}{N} \sum_{j=1}^{N} \left(Y^{j,N,l}(T) - Y^{j,N,l-1}(T) \right)^2},
\end{equation*}
where $Y^{j,N,l}(T)$ denotes the approximation of $X$ at time $T$ computed with $N$ particles and $2^lT$ time-steps.

For simplicity, we assume that $s_1, \ldots, s_k$ are contained in the considered time-grid, i.e., that they are of the form $-n\delta_{l_0}$ ({\color{black}where $l_0$ refers to the coarsest levels}) for some {\color{black}non-negative} integer $n$.

\subsection{First order strong convergence of the Milstein scheme}

{\color{black}This section numerically illustrates the convergence of the tamed Milstein schemes for point-delay McKean--Vlasov SDEs for two test cases. Specifically, we compare Schemes 1 and 2 from Section \ref{Section:Sec2} in Fig.\ \ref{PointDelay1}, top left.}
We also give implementation details of the scheme including the computation of the iterated stochastic integrals.

\textbf{Example 1}: 
Here, we consider the point-delay McKean--Vlasov SDE
\begin{align*}
\mathrm{d}X(t) &= \left( 1 -(X(t))^3 + X(t) + \frac{1}{k}\sum_{l=1}^{k} X(t + s_l)  + \frac{1}{k}\sum_{l=1}^{k}  \mathbb{E}[X(t + s_l)]  \right) \, \mathrm{d}t \\
& \quad + \frac{1}{k}\sum_{l=1}^{k} X(t + s_l) \, \mathrm{d}W(t), \quad t \in [0,T],
\end{align*}
with the initial value $X_0 = \xi = \boldsymbol{0} \in \mathscr{C}$, $s_1, \ldots, s_k \in [-\tau,0]$. Hence, the tamed Milstein approximation for this example results in the particle system 
\begin{align*}
Y^{i,N}(t_{n+1}) &= Y^{i,N}(t_n)  +  b_{\delta}(\Pi(Y_{t_n}^{i,N}),\Pi(\mu_{t_n}^{Y,N})) \delta  + \sigma( \Pi(Y_{t_n}^{i,N}),\Pi(\mu_{t_n}^{Y,N}))\Delta W_n^{i} \\
& \quad + \sum_{l=1}^{k} \partial_{x_l} \sigma( \Pi(Y_{t_n}^{i,N}),\Pi(\mu_{t_n}^{Y,N})) \tilde{\sigma}({\color{black}t_n + s_l},\Pi(Y_{t_n + s_l}^{i,N}),\Pi(\mu_{t_n}^{Y,N})) I^{i}(t_n + s_l,t_{n+1} +s_l; s_l),
\end{align*}  
where the driving Brownian motions $W^{i}$, $i \in \mathbb{S}_N$, are independent, and 
\begin{align*}
& b(\Pi(Y_{t_n}^{i,N}),\Pi(\mu_{t_n}^{Y,N})) = 1 - (Y^{i,N}(t_n))^3 + Y^{i,N}(t_n)  + \frac{1}{k}\sum_{l =1}^{k} Y^{i,N}(t_n + s_l) + \frac{1}{N} \sum_{j=1}^{N} \frac{1}{k} \sum_{l =1}^{k} Y^{j,N}(t_n + s_l),  \\
& \sigma(\Pi(Y_{t_n}^{i,N}),\Pi(\mu_{t_n}^{Y,N})) = \frac{1}{k}\sum_{l =1}^{k} Y^{i,N}(t_n + s_l).
\end{align*}
In addition, for Scheme 1 we have
\begin{equation*}
b_{\delta}( \Pi(Y_{t_n}^{i,N}),\Pi(\mu_{t_n}^{Y,N})) = \frac{ b(\Pi(Y_{t_n}^{i,N}),\Pi(\mu_{t_n}^{Y,N}))}{1 + \delta |b( \Pi(Y_{t_n}^{i,N}),\Pi(\mu_{t_n}^{Y,N}))|},
\end{equation*}
{\color{black}and with the additional exponent $\bar{q}$ for Scheme 2.}
Further, recall that $\partial_{x_l}$ denotes the derivative with respect to the $l$-th state component.

{\color{black}Above, $I^{i}(t_n + s_l,t_{n+1} +s_l; s_l)$ denotes the iterated stochastic integrals, with are approximated using a truncation parameter $r=M^{1/2}$ as described in Section \ref{subsec:milstein}.}
 
In the test, the delay parameters were $\tau = 1/8$ and $k=2$ (i.e., $s_1=0$ and $s_2 = -\tau$). 
The strong convergence of the discretised particle system is depicted in Fig.\ \ref{PointDelay1}, top left, where we observe the expected order one. 

\textbf{Example 2}:
In analogy to above, we additionally study the example
\begin{align*}
\mathrm{d}X(t) &= \left( 1 -(X(t))^3 + X(t) + \frac{1}{k}\sum_{l=1}^{k} X(t + s_l)  + \frac{1}{k}\sum_{l=1}^{k}  \mathbb{E}[X(t + s_l)]  \right) \, \mathrm{d}t \\
& \quad + \left( \frac{1}{k}\sum_{l=1}^{k}  \mathbb{E}[X(t + s_l)] \right) \, \mathrm{d}W(t), \quad t \in [0,T],
\end{align*}
with the initial value $X_0 = \xi = \boldsymbol{0} \in \mathscr{C}$, $s_1, \ldots, s_k \in [-\tau,0]$. 
As the diffusion coefficient does not depend on the state, the Milstein scheme reduces to the Euler--Maruyama scheme.
We confirm numerically in Fig.\ \ref{PointDelay1}, top left,
that the tamed Euler--Maruyama scheme converges in this case with order one.
\begin{figure}[!h]
\centering
\includegraphics[width=0.48\textwidth]{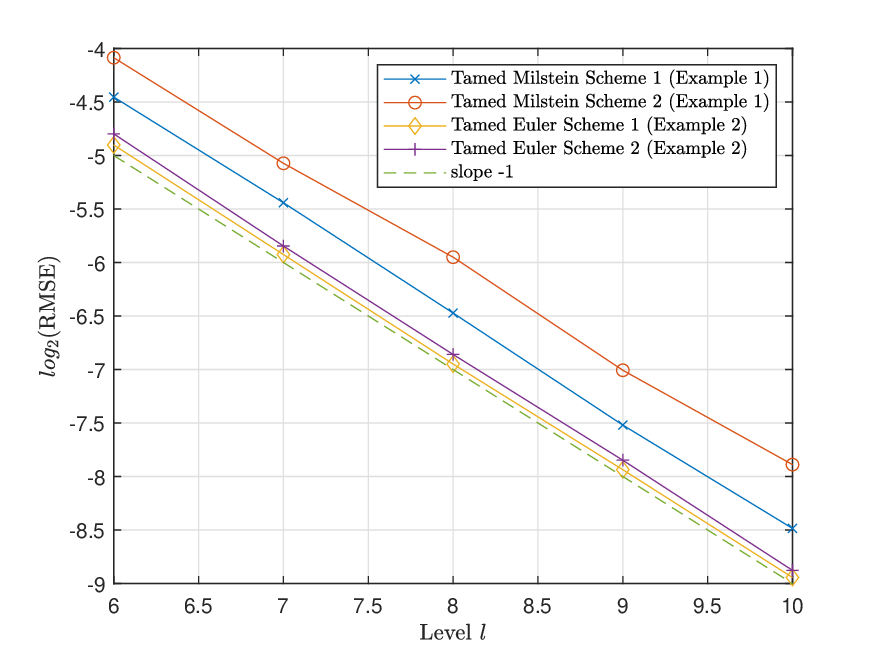} \hfill
\includegraphics[width=0.48\textwidth]{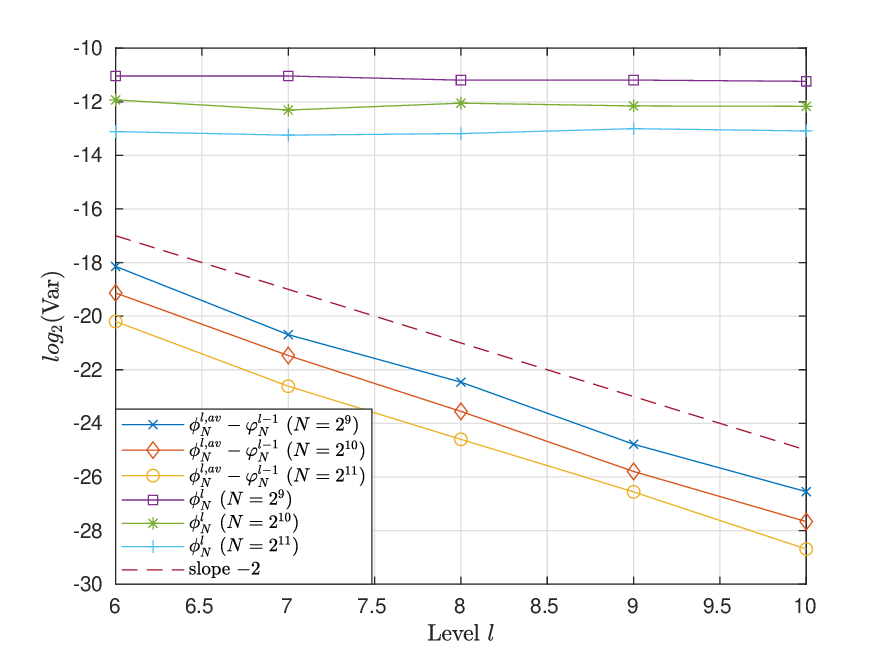} \hfill
 \includegraphics[width=0.48\textwidth]{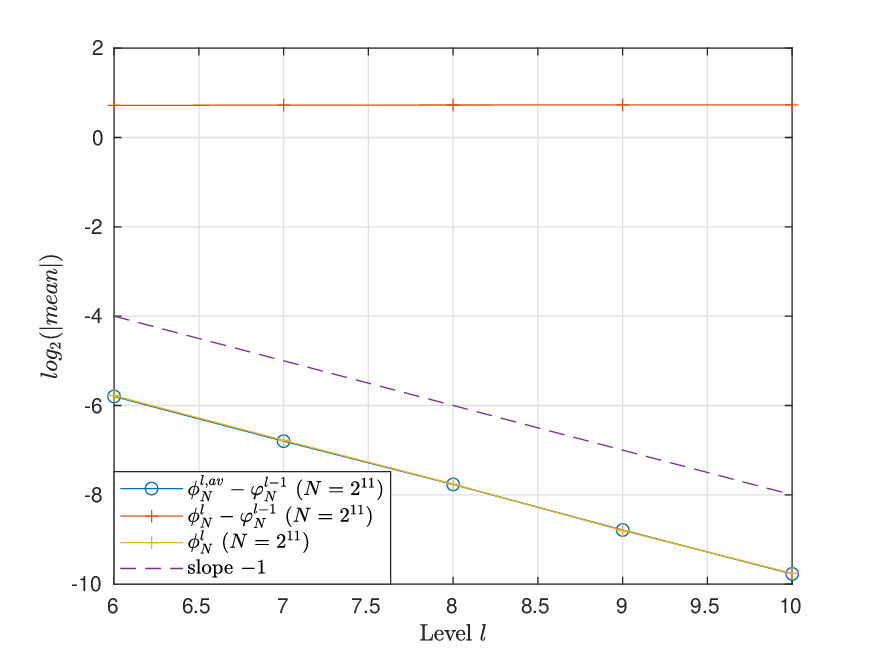} \hfill
 \includegraphics[width=0.48\textwidth]{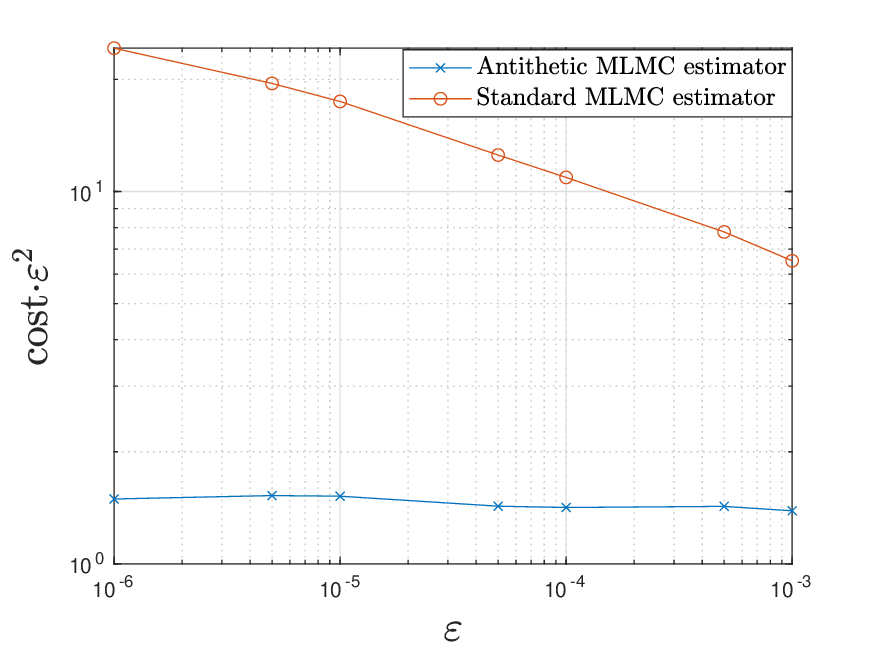}
\caption{Left top: Strong convergence of the tamed 
{\color{black} Milstein schemes (Scheme 1 and 2) for Example 1 and
the tamed Euler--Maruyama schemes for Example 2 (equivalent to Milstein in this case).
The following are for Scheme 2.}
Right top: Variance decay of the multilevel correction terms using the antithetic approach compared to standard MC. Left bottom: 
Decay of the expected value of the multilevel correction terms using the antithetic approach compared to standard MC.  Right bottom: Computational complexity.}
\label{PointDelay1}
\label{fig:StrongOrder1Antithetic}
\label{fig:StrongOrder1Antithetic2}
\label{fig:AntitheticMean}
\end{figure}

\subsection{Convergence and complexity of the antithetic multilevel sampling scheme}\label{SEC:TR}

Here, we numerically illustrate the performance of the MLMC Milstein method which uses the antithetic approach presented in Section \ref{Section:SectionAnti} applied to Example 1 from above. 
{\color{black} We focus on Scheme 2, which was found to be slightly more accurate numerically in the previous section.}
In particular, we will demonstrate the variance decay of the multilevel correction terms described in equation \eqref{eq:VarAssump} and investigate the overall computational complexity to estimate $\mathbb{E}[P(X_T)]$ with given accuracy $\varepsilon>0$ when the estimator \eqref{eqn:MLML} is used.  

Using the same notation as in Section \ref{Section:Sec2}, the scheme has the form 
\begin{align*}
Y^{i,N}(t_{n+1}) &=  Y^{i,N}(t_n) + b_{\delta}(\Pi(Y^{i,N}_{t_n}),\Pi(\mu_{t_n}^{Y,N}))\delta + \sigma(\Pi(Y^{i,N}_{t_n}),\Pi(\mu_{t_n}^{Y,N}))\Delta W^{i}_n \\
& \quad + \frac{1}{2} \sum_{l=1}^{k} \partial_{x_l} \sigma(\Pi(Y^{i,N}_{t_n}),\Pi(\mu_{t_n}^{Y,N})) \tilde{\sigma}({\color{black}t_n + s_l},\Pi(Y_{t_n + s_l}),\Pi(\mu_{t_n+s_l}^{Y,N}))\left( \bar{W}^{i}(\delta)\bar{B}^{i}(\delta) - \mathrm{I}_{\lbrace s_l=0 \rbrace} \delta \right).
\end{align*} 
As payoff function, we choose for simplicity $P(x)=x$, but
a similar behaviour is expected for any smooth enough $P$. 

Now, on a fine time-grid, we compute two approximate solutions to the particle system at time $T$, denoted by $Y^{i,N,f}(T)$ and $Y^{i,N,a}(T)$, where the simulation of $Y^{i,N,a}(T)$ is based on the antithetic Brownian paths of the paths used to compute $Y^{i,N,f}(T)$, i.e., the odd and even Brownian increments are interchanged. We then determine the variance \eqref{eq:VarAssump}, where $\phi_{N}^{l,av}$ and $\varphi_{N}^{l-1}$ are estimated using the average $(Y^{i,N,f}(T) + Y^{i,N,a}(T))/2$ and $Y^{i,N,c}(T)$, a coarse path solution, respectively. 
The quantity $\phi_{N}^{l,av}$ is estimated using the samples $Y^{i,N,f}(T)$ only. 
We choose $N \in \lbrace 2^{9}, 2^{10}, 2^{11} \rbrace$, to numerically confirm the additional factor $1/N$ in \eqref{eq:VarAssump}. 
 
The computations are based on the choices $k=2$ and $\tau=1/8$ using the SDE in Example 1 given in the previous section. 
Fig.\ \ref{fig:StrongOrder1Antithetic} shows that the strong convergence rate of the proposed scheme is one (i.e., $s=2$). The variance of the standard MC estimator barely varies with the levels.
Additionally, in Fig.\ \ref{fig:AntitheticMean} we depict the decay of the the expected value of the MLMC correction terms. We observe that this decay is of order one. As anticipated, the antithetic MLMC and standard MLMC estimator have the same
expected value.

To investigate the required complexity of the proposed MLMC estimator to achieve an accuracy $\varepsilon >0$ (in a RMSE sense), we plot $\varepsilon^{-2} \cdot \textrm{cost}$ against the accuracy $\varepsilon$, see Fig.\ \ref{fig:StrongOrder1Antithetic2}. Here, $\textrm{cost}$ denotes the complexity to compute (\ref{eqn:MLML}) with given $L$, $K_l$, $M_l= 2^l$ and $N$, i.e., $ \textrm{cost} = \mathcal{O}(N \sum_{l=0}^{L} K_l M_l)$. Since, in Example 1, the law-dependence is given by an expectation, the empirical distribution only has to be computed once (at cost $N$) at each time-step, so $p=0$ in Section \ref{sec:complex}.

\section{Proof of Theorem \ref{THDelay:THDelay4}} \label{Section:Sec3}

Here, and throughout the remaining article, we write $a \1 b$ to express that there exists a constant $C>0$ such that $a \leq Cb$, where $a,b \in \mathbb{R}$. We remark that the implied constant $C>0$ might be dependent on $p,\vv,T, m, d$ i.e.,
$C=C(\vv,p,T,m,d)$, but is independent of $M$ and $N$; also, they may change their values from line to line in a sequence of inequalities. 
We restrict the theoretical investigation to Scheme 2, as proving a moment bound for Scheme 1 is very involved and requires a lengthy (non-standard) analysis. 

\subsection{Proof of Theorem \ref{THDelay:THDelay4}}
For the proof of Theorem \ref{THDelay:THDelay4}, we will focus on the key differences to the non-delay case (see \cite{stock}) and omit other details. We again consider the one point-delay case for most parts of the subsequent discussion. 
We define, for $t \geq 0$ and $i \in\mathbb{S}_N$,
\begin{align}\label{Delay1}
\Psi_t^{i,k} &:=  \sum_{l=1}^{k} \partial_{x_l} \si(\Pi(Y_{t_\dd }^{i,N}), \Pi(\mu_{t_\dd}^{Y,N})) \tilde{\si}(\Pi(Y_{t_\dd +s_l}^{i,N}),\Pi(\mu_{t_\dd + s_l}^{Y,N})) \int_{t_\dd +s_l}^{t + s_l} \d W^i(r) \nonumber \\
& \qquad + \sum_{l=1}^{k} \ff{1}{N}\sum_{j=1}^N   \partial_{\mu_l} \si(\Pi(Y_{t_\dd}^{i,N}),\Pi(\mu_{t_\dd}^{Y,N}))(Y^{j,N}(t_\dd+s_l))\tilde{\si}(t_{\dd} + s_l,\Pi(Y_{t_\dd +s_l}^{i,N}),\Pi(\mu_{t_\dd + s_l}^{Y,N})) \int_{t_\dd + s_l}^{t+s_l} \d W^j(r), \nonumber \\
\Upsilon_t^{i,k} &:=
\si(\Pi(Y_{t_\dd}^{i,N}),\Pi(\mu_{t_\dd}^{Y,N})) + \Psi_t^{i,k}, \nonumber \\
 \Gamma_t^{i,k} &:=
\si(\Pi(Y_t^{i,N}), \Pi(\mu_t^{Y,N}))-\Upsilon_t^{i,k} \nonumber \\
M_t^{i,k} &:=\int_0^t (\si(\Pi(X_s^{i,N}),\Pi(\mu_s^{X,N}))-\Upsilon_s^{i,k}) \, \d W^i(s). 
\end{align}
\begin{lem}\label{TH:TH3}
Let $Y^{i,N}(t_n)$ for $n \in \lbrace 0, \ldots, M \rbrace$, be defined as in (\ref{eq:DelayParticleSystem}). Then, under Assumptions ({\bf AD$_b^1$}), ({\bf AD$_\sigma^1$})--({\bf AD$_\sigma^2$}) and ({\bf H$_1$}), for any $p \geq 2$ there exists a constant $C >0$ (independent of $N$ and $M$) such that
\begin{equation*} 
   \max_{i \in \mathbb{S}_N} \max_{n \in \lbrace 0, \ldots, M \rbrace} \mathbb{E}[| Y^{i,N}(t_n) |^p]  \leq C. 
\end{equation*} 
\end{lem}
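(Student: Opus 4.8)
The plan is to establish the bound by applying It\^{o}'s formula to $|Y^{i,N}(t)|^p$ for the continuous-time interpolant of (\ref{eq:DelayParticleSystem}) (whose diffusion coefficient is exactly $\Upsilon_t^{i,k}$ from (\ref{Delay1})), exploiting the taming of the drift together with the one-sided monotonicity in ({\bf AD$_b^1$})(1), and then closing the resulting integral inequality by Gronwall's lemma. Since the delayed arguments $Y^{j,N}(t_\dd+s_l)$ refer to states on an earlier interval, I would run this argument inductively over the successive delay windows $[0,\tau],[\tau,2\tau],\dots$ up to $T$: on $[0,\tau]$ the delayed arguments are the (interpolated) deterministic initial data, which by ({\bf H$_1$}) have finite moments of every order, and the bound obtained on each window furnishes the delayed moments needed on the next one. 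Because $\xi$ is deterministic, the amplification of the required moment order by a factor $(1+q)$, $q=q_1\lor q_2$, at each step (exactly the mechanism behind the definition of $k_i$ in Proposition \ref{Prop:MomentDelayMcKean}) causes no difficulty, and all constants remain independent of $M$ and $N$.

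First I would record the elementary consequences of the Scheme~2 taming, namely $|b_\dd(x,\mu)|\le (2\sqrt\dd)^{-1}$, $|b_\dd(x,\mu)|\le |b(x,\mu)|$, and hence $|b_\dd(x,\mu)|\,\dd\1\sqrt\dd$; combined with ({\bf AD$_b^1$})(1) (taking $y_1'=\boldsymbol{0}$) and the growth extracted from ({\bf AD$_b^1$})(2)--(3), this yields a tamed one-sided estimate $\langle x_1,b_\dd(x,\mu)\rangle\1 1+|x_1|^2+\sum_{i\ge2}|x_i|^{2(1+q_1)}+\mu_1(|\cdot|^2)$, in which the current-state growth is controlled and the super-linear delayed-state contribution is left to be absorbed by induction. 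Next I would prove a one-step displacement estimate $\E[\,|Y^{i,N}(t)-Y^{i,N}(t_\dd)|^p\mid\mathcal{F}_{t_\dd}]\1\dd^{p/2}(1+|Y^{i,N}(t_\dd)|^{p}+\cdots)$: the drift part contributes $|b_\dd|\dd\1\sqrt\dd$, while the martingale part is handled by Burkholder--Davis--Gundy, using that the (possibly shifted) integrals $\int_{t_\dd+s_l}^{t+s_l}\d W^i(r)$ have variance at most $\dd$ and that, by ({\bf AD$_\sigma^2$}), the Milstein coefficient products $\partial_{x_l}\si\,\tilde\si$ and $\partial_{\mu_l}\si\,\tilde\si$ grow at most linearly (case (a), where $q_2=0$) or in the controlled way prescribed by case (b). I would also reduce the empirical-measure arguments by exchangeability: $\mu_{t_\dd}^{Y,N}(|\cdot|^2)=\tfrac1N\sum_j|Y^{j,N}(t_\dd)|^2$ has expectation equal to the single-particle second moment, and by Jensen the same holds for its $p/2$-th power, which is what keeps the constants independent of $N$.

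With these ingredients, I would apply It\^{o}'s formula and split the drift term as $\langle Y^{i,N}(t),b_\dd(\Pi(Y_{t_\dd}^{i,N}),\cdots)\rangle=\langle Y^{i,N}(t_\dd),b_\dd(\cdots)\rangle+\langle Y^{i,N}(t)-Y^{i,N}(t_\dd),b_\dd(\cdots)\rangle$, bounding the first summand by the tamed one-sided estimate and the second by the displacement bound and $|b_\dd|\le(2\sqrt\dd)^{-1}$. Localising with a stopping time removes the stochastic-integral term in expectation, and the It\^{o} correction $\tfrac{p(p-1)}2|Y^{i,N}(t)|^{p-2}\|\Upsilon_t^{i,k}\|^2$ I would dominate by $\|\si\|^2$ (linear-plus-delay growth from ({\bf AD$_\sigma^1$})) plus $\|\Psi_t^{i,k}\|^2$, the latter being of order $\dd$ after conditioning on $\mathcal{F}_{t_\dd}$ and hence a higher-order contribution. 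Taking expectations, replacing measure arguments as above, and using that the coefficients are frozen at $t_\dd$ so that the right-hand side features $\E[|Y^{i,N}(s_\dd)|^p]$ at grid points, I arrive at $\E[|Y^{i,N}(t)|^p]\1 1+\int_0^t\max_{k:\,t_k\le s}\E[|Y^{i,N}(t_k)|^p]\,\d s+(\text{already-bounded delayed moments})$, and the discrete Gronwall lemma closes the bound on the generic window.

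The hard part, and the true departure from the non-delay analysis of \cite{stock}, is the treatment of the Milstein correction $\Psi_t^{i,k}$ and of its square inside the It\^{o} correction. For $s_l\ne0$ the integral $\int_{t_\dd+s_l}^{t+s_l}\d W^i(r)$ is measurable with respect to an earlier Wiener segment (precisely the L\'{e}vy-area obstruction of the delay setting), so the usual independence arguments must be replaced by careful conditioning, and the super-linear dependence of $\si$ on delayed states must be absorbed either by the boundedness of ({\bf AD$_\sigma^2$})(a) or by the growth and non-anticipativity structure of ({\bf AD$_\sigma^2$})(b). Keeping every constant independent of $N$ forces one to handle the $L$-derivative terms $\tfrac1N\sum_j\partial_{\mu_l}\si(\cdots)(Y^{j,N}_{t_\dd+s_l})$ through the Riesz representation of the $L$-derivative together with the linear-growth bound of ({\bf AD$_\sigma^2$}), and the interplay of this with the inductive delay-window bootstrap is where the main bookkeeping lies.
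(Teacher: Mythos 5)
Your proposal follows essentially the same route as the paper: the paper's proof is a sketch invoking It\^{o}'s formula, the tamed one-sided drift estimate, a Gronwall argument (citing \cite{KUMAR}), and the inductive bootstrap over delay windows from Proposition \ref{Prop:MomentDelayMcKean}, with the only point singled out being exactly the one you identify -- that the delayed Milstein correction $\int_{t_\dd+s_l}^{t+s_l}\d W^i(r)$ terms are controlled via the uniform boundedness in ({\bf AD$_\sigma^2$})(a) (or the non-anticipative growth structure of (b)) together with the growth of $\tilde\si$. Your additional bookkeeping (taming bounds, displacement estimate, exchangeability of particles) fills in the standard details the paper leaves implicit and is consistent with its argument.
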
 
\begin{proof}
The proof can be carried out using It\^{o}'s formula along with a standard Gronwall-type argument (see, e.g., \cite[Lemma 4]{K}) in combination with an inductive procedure outlined in the proof of Proposition \ref{Prop:MomentDelayMcKean}. We note that item (a) in ({\bf AD$_\sigma^2$}) on the uniform boundedness of derivatives of $\sigma$ with respect to the delay variables implies that
\begin{align*}
&\mathbb{E} \left[\left|  \partial_{x_2} \sigma(\Pi(Y^{i,N}_{t_\dd}),\Pi(\mu_{t_\dd}^{Y,N})) \tilde{\sigma}({\color{black}t_\dd + s_2},\Pi(Y^{i,N}_{t_\dd + s_2}),\Pi(\mu_{t_\dd +s_2}^{Y,N})) \int_{t_\dd +s_2}^{t+ s_2}  \mathrm{d} W^{i}(r) \right|^p \right]  \\
&\1 \mathbb{E} \left[ \left| \int_{t_\dd +s_2}^{t+ s_2} \tilde{\sigma}({\color{black}t_\dd + s_2},\Pi(Y^{i,N}_{t_\dd + s_2}),\Pi(\mu_{t_\dd +s_2}^{Y,N}))  \, \mathrm{d} W^{i}(r) \right|^p \right] \\
&\1 \delta^{p/2} \left(1+ \mathbb{E} \left[|Y^{i,N}({t_\dd -\tau })|^p \right] + \mathbb{E} \left[|Y^{i,N}({t_\dd -2\tau })|^p \right]\right), 
\end{align*} 
where we used the growth assumptions on $\sigma$ ({\bf AD$_\sigma^1$}) with $q_2=0$, and {\color{black}for simplicity consider $k=2$ and $s_2 = -\tau$}. 

{\color{black}Observe that the uniform boundedness of derivatives with respect to the delay variables is crucial, as in the term
\begin{equation*}
\partial_{x_2} \sigma(\Pi(Y^{i,N}_{t_\dd}),\Pi(\mu_{t_\dd}^{Y,N})) \int_{t_\dd +s_2}^{t+ s_2} \mathrm{d} W^{i}(r), 
\end{equation*} 
$\partial_{x_2} \sigma(\Pi(Y^{i,N}_{t_\dd}),\Pi(\mu_{t_\dd}^{Y,N}))$ is anticipative.

However, if the second assumption item (b) in ({\bf AD$_\sigma^2$}) holds, then it becomes non-anticipative. The growth assumption on $\partial_{x_2}\sigma_u \sigma$ in ({\bf AD$_\sigma^2$}) gives
\begin{align*}
&\mathbb{E} \left[ \left|  \partial_{x_2} \sigma(\Pi(Y^{i,N}_{t_\dd}),\Pi(\mu_{t_\dd}^{Y,N})) \tilde{\sigma}({\color{black}t_\dd + s_2},\Pi(Y^{i,N}_{t_\dd + s_2}),\Pi(\mu_{t_\dd +s_2}^{Y,N})) \int_{t_\dd +s_2}^{t+ s_2}  \mathrm{d} W^{i}(r) \right|^p \right]  \\
&\1 \delta^{p/2} \left(1+ \mathbb{E}[|Y^{i,N}({t_\dd -\tau })|^{p(1 + q_3)}] + \mathbb{E}[|Y^{i,N}({t_\dd -2\tau })|^{p(1+q_3)}] + \mathbb{E}[|Y^{i,N}({t_\dd -\tau })|^p] + \mathbb{E}[|Y^{i,N}({t_\dd -2\tau })|^p] \right).
\end{align*}}
\end{proof}
{\color{black}
\begin{remark}
It seems that techniques from anticipative calculus, see \cite{Nualart}, are necessary if one wishes to allow (unbounded) mixed terms involving delay and non-delay variables in the diffusion coefficient.
\end{remark}
}
{\color{black}In the sequel, we set $Z^{i,N}:=X^{i,N}-Y^{i,N}$, where $(X^{i,N})_{i \in \mathbb{S}_N}$ is the particle system defined in \eqref{eq:DelayParticleSystem_cont} and $(Y^{i,N})_{i \in \mathbb{S}_N}$ is defined by (\ref{eq:DelayParticleSystem}).}
\begin{lem}\label{Lemm1D}
Let Assumptions ({\bf AD}$_b^1$), ({\bf AD}$_\si^1$)--({\bf AD}$_\si^3$), ({\bf H$_1$}) hold, and $k=2$ {(\color{black}for simplicity)}. Then, for any $p \geq 2$ there is a constant $C>0$ such that, for all $t \in [0,T]$
\begin{align}\label{F1D}
\Lambda_t^{i,2,p} &:=\int_0^t(\E [\|\si(\Pi(X_s^{i,N}),\Pi(\mu_s^{X,N
}))-\Upsilon_s^{i,2}\|^{2p}])^{\ff{1}{p}} \d s  \nonumber \\
& \quad \le C\Big\{\dd^2+\int_0^t (\E[|Z^{i,N}(s)|^{2p}])^{\ff{1}{p}}\d
s \nonumber \\
& \hspace{1.65cm} + {\color{black} \int_0^{0 \lor (t-\tau)}  (\E [|Z^{i,N}(s)|^{2p}(1+|X^{i,N}(s)|^{2pq_2} + |Y^{i,N}(s)|^{2pq_2})])^{\ff{1}{p}}\, \d
s} \Big \}.
\end{align}
\end{lem}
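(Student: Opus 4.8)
The plan is to split the integrand into a part that is Lipschitz–controlled by the global error $Z^{i,N}=X^{i,N}-Y^{i,N}$ and a part capturing the one–step consistency error of the Milstein correction. Concretely, I would insert the intermediate value $\sigma(\Pi(Y_s^{i,N}),\Pi(\mu_s^{Y,N}))$ and write
\[
\sigma(\Pi(X_s^{i,N}),\Pi(\mu_s^{X,N}))-\Upsilon_s^{i,k}=A_s+B_s,
\]
where $A_s:=\sigma(\Pi(X_s^{i,N}),\Pi(\mu_s^{X,N}))-\sigma(\Pi(Y_s^{i,N}),\Pi(\mu_s^{Y,N}))$ and $B_s:=\sigma(\Pi(Y_s^{i,N}),\Pi(\mu_s^{Y,N}))-\sigma(\Pi(Y_{s_\dd}^{i,N}),\Pi(\mu_{s_\dd}^{Y,N}))-\Psi_s^{i,k}$. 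Since $\|a+b\|^{2p}\lesssim\|a\|^{2p}+\|b\|^{2p}$, it suffices to bound the two pieces separately.

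For $A_s$ I would apply the Lipschitz bound ({\bf AD$_\sigma^1$}): the current–state difference contributes $|Z^{i,N}(s)|$, the delayed–state difference contributes $|Z^{i,N}(s-\tau)|$ (weighted by a polynomial factor when $q_2>0$, controlled by H\"older's inequality together with the uniform moment bound of Lemma \ref{TH:TH3} and Proposition \ref{Prop:MomentDelayMcKean}), and the measure arguments contribute $\mathcal{W}_2(\mu_s^{X,N},\mu_s^{Y,N})$ and its delayed analogue. Estimating the empirical Wasserstein distance by the synchronous coupling, $\mathcal{W}_2(\mu_s^{X,N},\mu_s^{Y,N})^2\le\frac1N\sum_{j=1}^N|Z^{j,N}(s)|^2$, and using convexity together with exchangeability of the particles (identical laws) to reduce the $2p$-th moment of the average to $\mathbb{E}[|Z^{i,N}(s)|^{2p}]$, this produces, after the change of variables $r=s-\tau$ in the delayed terms, exactly the two integrals $\int_0^t(\mathbb{E}[|Z^{i,N}(s)|^{2p}])^{1/p}\,ds$ and $\int_0^{0\vee(t-\tau)}(\mathbb{E}[|Z^{i,N}(s)|^{2p}])^{1/p}\,ds$; the residual range $r\in[-\tau,0]$ is of order $\delta^2$ by the time–regularity ({\bf H$_1$}) of the interpolated initial datum.

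The heart of the argument is the uniform bound $(\mathbb{E}[\|B_s\|^{2p}])^{1/p}\lesssim\delta^2$. As the paper assumes only $\sigma\in C^{1,1}$, I would avoid It\^{o}'s formula and instead use a first–order Taylor (fundamental–theorem–of–calculus) expansion of $\sigma$ between the grid point $s_\dd$ and $s$, writing the increment as an integral over $\theta\in[0,1]$ of $\partial_{x_l}\sigma$ and $\partial_{\mu_l}\sigma$ evaluated along the segment joining $\Pi(Y_{s_\dd}^{i,N})$ to $\Pi(Y_s^{i,N})$, contracted against the state increments $Y^{i,N}(s+s_l)-Y^{i,N}(s_\dd+s_l)$ and the corresponding measure increments. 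Substituting the one–step dynamics of the scheme, the leading (martingale) part of each state increment is $\tilde\sigma\int_{s_\dd+s_l}^{s+s_l}\mathrm{d}W^i$, which cancels against $\Psi_s^{i,k}$ once the derivative coefficients are frozen at $s_\dd$. The remainder then consists of (i) drift contributions $b_\delta(s-s_\dd)$ and the Milstein double–integral terms inside the increments, each $O(\delta)$ in $L^{2p}$ by Lemma \ref{TH:TH3}, and (ii) the coefficient–freezing differences $\partial_{x_l}\sigma(\theta\text{-point})-\partial_{x_l}\sigma(s_\dd)$ and their measure analogues, which by ({\bf AD$_\sigma^3$}) are $O(\delta^{1/2})$ in $L^{2p}$ and, paired with a Brownian increment of size $O(\delta^{1/2})$ and estimated via Burkholder--Davis--Gundy (BDG), again yield $O(\delta)$. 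The measure–derivative terms are handled through the $L$-derivative calculus on $\mathcal{P}_2(\R^d)$, reducing the $\frac1N\sum_j$ sums to single–particle moments by exchangeability.

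The hard part will be the delay terms with $l\ge2$: the frozen coefficient $\partial_{x_l}\sigma(\Pi(Y_{s_\dd}^{i,N}),\cdot)$ is $\mathcal{F}_{s_\dd}$-measurable, while the factor $\int_{s_\dd+s_l}^{s+s_l}\mathrm{d}W^i$ integrates over the past interval $[s_\dd-\tau,s-\tau]$, so the integrand is anticipative and the product cannot be estimated by a naive BDG/independence argument — this is precisely the obstruction flagged in the Remark following Lemma \ref{TH:TH3}. I would resolve it by invoking exactly one of the alternatives in ({\bf AD$_\sigma^2$}): under item (a) the delay–derivatives are uniformly bounded ($q_2=0$), so the coefficient factors out of the moment; under item (b) the derivative does not depend on the earlier state/measure components, which restores non–anticipativity and lets the polynomial growth be absorbed via the moments of Lemma \ref{TH:TH3}. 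Collecting (i)--(ii) gives $(\mathbb{E}[\|B_s\|^{2p}])^{1/p}\lesssim\delta^2$, and integrating in $s$ together with the estimate for $A_s$ yields \eqref{F1D}.
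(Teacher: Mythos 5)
Your decomposition $\sigma(\Pi(X_s^{i,N}),\Pi(\mu_s^{X,N}))-\Upsilon_s^{i,k}=A_s+B_s$ is exactly the paper's (your $B_s$ is the paper's $\Gamma_s^{i,k}$), and your treatment of both pieces --- exchangeability plus the synchronous Wasserstein coupling for $A_s$, and for $B_s$ the fundamental-theorem-of-calculus expansion with derivative coefficients frozen at $s_\dd$, cancellation of the leading martingale part of the one-step increment against $\Psi_s^{i,k}$, order-$\delta$ remainders via ({\bf AD}$_\sigma^3$), and the (a)/(b) dichotomy of ({\bf AD}$_\sigma^2$) for the anticipative delayed terms --- is the same argument the paper gives. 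The only cosmetic differences are that the paper telescopes the arguments of $\sigma$ one at a time ($\Xi_1,\dots,\Xi_4$) and works directly with $q_2=0$, whereas you expand along a single segment and briefly sketch the $q_2>0$ variant.
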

\begin{proof}
From ({\bf AD}$_\si^1$) (possibly with $q_2 =0$)  and Minkowski's inequality,
we derive that
\begin{equation*}
\begin{split}
\Lambda_t^{i,2,p} &\1\int_0^t (\E [\|\si(\Pi(X_s^{i,N}),\Pi(\mu_s^{X,N}
))-\si(\Pi(Y_s^{i,N}),\Pi(\mu_s^{Y,N})) \|^{2p}])^{\ff{1}{p}} \,\d
s+\int_0^t (\E[\|\Gamma_s^{i,2}\|^{2p}])^{\ff{1}{p}}\, \d s\\
&\1\int_0^t  (\E[|Z^{i,N}(s)|^{2p}])^{\ff{1}{p}}\, \d
s+\ff{1}{N}\sum_{j=1}^N\int_0^t(\E[|Z^{j,N}(s)|^{2p}])^{\ff{1}{p}}\, \d s \\
& \hspace{1cm} + {\color{black}\int_0^{0 \lor (t-\tau)}  (\E [|Z^{i,N}(s)|^{2p}(1+|X^{i,N}(s)|^{2pq_2} + |Y^{i,N}(s)|^{2pq_2})])^{\ff{1}{p}}\, \d
s} \\
& \hspace{1cm} + \ff{1}{N}\sum_{j=1}^N\int_0^{0 \lor (t-\tau)} (\E [|Z^{j,N}(s)|^{2p}] )^{\ff{1}{p}}\, \d s +  \int_0^t(\E [\|\Gamma_s^{i,k}\|^{2p}])^{\ff{1}{p}}\, \d s\\
&\1\int_0^t (\E[|Z^{i,N}(s)|^{2p}])^{\ff{1}{p}}\, \d s +\int_0^t(\E [\|\Gamma_s^{i,k}\|^{2p}])^{\ff{1}{p}}\, \d s \\
& \qquad + {\color{black} \int_0^{0 \lor (t-\tau)} (\E [|Z^{i,N}(s)|^{2p}(1+|X^{i,N}(s)|^{2pq_2} + |Y^{i,N}(s)|^{2pq_2})])^{\ff{1}{p}}\, \d s,}
\end{split}
\end{equation*}
where in the last display we used the fact that
$Z^{j,N},j\in\mathbb{S}_N,$ are identically distributed.
Consequently, to derive \eqref{F1D}, it is sufficient to show that for $t \in [0,T]$,
\begin{equation}\label{F22D}
(\E[\|\Gamma_t^{i,2}\|^{2p}])^{\ff{1}{p}} \1 \dd^2,
\end{equation}
In the sequel, we will restrict the discussion to $d=m=1$ for ease of notation. 
First, we write 
\begin{align*}
& \sigma(Y^{i,N}(t),Y^{i,N}(t-\tau), \mu_{t}^{Y,N},\mu_{t-\tau}^{Y,N}) - \sigma(Y^{i,N}(t_\dd),Y^{i,N}(t_\dd-\tau), \mu_{t_\dd}^{Y,N},\mu_{t_\dd-\tau}^{Y,N}) \\
& = \sigma(Y^{i,N}(t),Y^{i,N}(t-\tau), \mu_{t}^{Y,N},\mu_{t-\tau}^{Y,N}) - \sigma(Y^{i,N}(t_\dd),Y^{i,N}(t-\tau), \mu_{t}^{Y,N},\mu_{t-\tau}^{Y,N}) \\ 
& \quad + \sigma(Y^{i,N}(t_\dd),Y^{i,N}(t-\tau), \mu_{t}^{Y,N},\mu_{t-\tau}^{Y,N}) - \sigma(Y^{i,N}(t_\dd),Y^{i,N}(t_\dd -\tau), \mu_{t}^{Y,N},\mu_{t-\tau}^{Y,N}) \\
& \quad + \sigma(Y^{i,N}(t_\dd),Y^{i,N}(t_\dd -\tau), \mu_{t}^{Y,N},\mu_{t-\tau}^{Y,N}) - \sigma(Y^{i,N}(t_\dd),Y^{i,N}(t_\dd -\tau), \mu_{t}^{Y,N},\mu_{t_\dd-\tau}^{Y,N}) \\
& \quad + \sigma(Y^{i,N}(t_\dd),Y^{i,N}(t_\dd -\tau), \mu_{t}^{Y,N},\mu_{t_\dd-\tau}^{Y,N}) - \sigma(Y^{i,N}(t_\dd),Y^{i,N}(t_\dd -\tau), \mu_{t_\dd}^{Y,N},\mu_{t_\dd-\tau}^{Y,N}) \\
& =: \sum_{i=1}^{4} \Xi_i.
\end{align*}
Next, using the definition $\Delta Y^{i,N}(t):= Y^{i,N}(t) - Y^{i,N}(t_\dd)$, we observe that
\begin{align*}
\Xi_1 &= \int_{0}^{1} \frac{\d}{\d \lambda} \sigma(Y^{i,N}(t_\dd) + \lambda \Delta Y^{i,N}(t),Y^{i,N}(t-\tau), \mu_{t}^{Y,N},\mu_{t-\tau}^{Y,N}) \, \d \lambda \\
&=  \int_{0}^{1} \partial_{x_1} \sigma(Y^{i,N}(t_\dd) + \lambda \Delta Y^{i,N}(t), Y^{i,N}(t-\tau),\mu_{t}^{Y,N},\mu_{t-\tau}^{Y,N})\Delta Y^{i,N}(t) \, \d \lambda.
\end{align*}
Therefore, simple computations show, due to assumption ({\bf AD}$_\si^3$)
\begin{align}\label{eqH}
&\mathbb{E} \Big[\Big| \Xi_1 -  \partial_{x_1} \sigma(Y^{i,N}(t_\dd), Y^{i,N}(t_\dd -\tau),\mu_{t_\dd}^{Y,N},\mu_{t_\dd-\tau}^{Y,N}) \Delta Y^{i,N}(t)  \Big|^2 \Big] \nonumber \\
&= \mathbb{E} \Big[\Big| \int_{0}^{1} \big( \partial_{x_1} \sigma(Y^{i,N}(t_\dd) + \lambda \Delta Y^{i,N}(t), Y^{i,N}(t-\tau), \mu_{t}^{Y,N},\mu_{t-\tau}^{Y,N}) \nonumber \\
& \qquad -\partial_{x_1} \sigma(Y^{i,N}(t_\dd), Y^{i,N}(t_\dd -\tau),\mu_{t_\dd}^{Y,N},\mu_{t_\dd-\tau}^{Y,N}) \big) \Delta Y^{i,N}(t) \, \d \lambda \Big|^2 \Big] \nonumber \\
& \1  \mathbb{E}[|Y^{i,N}(t) - Y^{i,N}(t_\dd)|^4] + \left(\mathbb{E}[|Y^{i,N}(t) - Y^{i,N}(t_\dd)|^4] \right)^{1/2} \left( \mathbb{E}[|Y^{i,N}(t-\tau) - Y^{i,N}(t_\dd -\tau)|^4] \right)^{1/2} \nonumber \\
& \qquad + \left( \frac{1}{N} \sum_{j=1}^{N} \mathbb{E}[|Y^{j,N}(t) - Y^{j,N}(t_\dd)|^4] \right)^{1/2} \left( \mathbb{E}[|Y^{i,N}(t) - Y^{i,N}(t_\dd)|^4] \right)^{1/2} \1 \delta^2,
\end{align}
where we used ({\bf H$_1$}) and $\mathbb{E}[|Y^{i,N}(t) - Y^{i,N}(t_\dd)|^4] \1 \delta^2$. {\color{black}The claim regarding the one-step error is a consequence of the identity 
\begin{align}\label{eq:one-step}
Y^{i,N}(t) - Y_{t_{\dd}}^{i,N} &= b_{\delta}(Y^{i,N}(t_\dd),\mu_{t_\dd}^{Y,N})(t-t_\dd) + \sigma({\color{black}\Pi(Y^{i,N}_{t_\dd})},\Pi(\mu_{t_\dd}^{Y,N}))\int_{t_\dd}^{t} \, \mathrm{d}W^{i}(s)  \nonumber \\
& \quad + \sum_{l=1}^{2}\partial_{x_l}\sigma(\Pi(Y^{i,N}_{t_\dd}),\Pi(\mu_{t_\dd}^{Y,N})) \tilde{\sigma}({\color{black}t_\dd + s_l},\Pi(Y^{i,N}_{t_\dd + s_l}),\Pi(\mu_{t_\dd +s_l}^{Y,N})) \int_{t_\dd}^{t} \int_{t_\dd +s_l}^{s+ s_l}  \, \mathrm{d} W^{i}(u)\, \mathrm{d} W^{i}(s)  \nonumber \\
& \quad + \sum_{l=1}^{2} \frac{1}{N}\sum_{j = 1}^{N}  \partial_{\mu_l} \sigma(\Pi(Y^{i,N}(t_\dd)), \Pi(\mu_{t_\dd}^{Y,N}))(Y^{j,N}(t_\dd +s_l))  \nonumber \\
& \hspace{2cm} \times \tilde{\sigma}({\color{black}t_\dd + s_l},\Pi(Y_{t_\dd+s_l}^{j,N}), \Pi(\mu_{t_\dd+s_l}^{Y,N}))  \ \int_{t_\dd}^{t} \int_{t_\dd +s_l}^{s+ s_l}  \, \mathrm{d} W^{j}(u)\, \mathrm{d} W^{i}(s),
\end{align}
and standard inequalities along with the moment stability of $Y^{i,N}$.}
A similar estimate holds if $\Xi_1$ is replaced by $\Xi_2$. 

Using \cite[Proposition 5.35]{CD} we get that
\begin{align*}
\Xi_4 &= \int_{0}^{1} \frac{\d}{\d \lambda} \sigma(Y^{i,N}(t_\dd),Y^{i,N}(t_\dd -\tau), \mu_{t}^{\lambda, Y,N},\mu_{t_\dd-\tau}^{Y,N}) \, \d \lambda \\
&=  \int_{0}^{1} \frac{1}{N}\sum_{j=1}^{N}  \partial_{\mu_1} \sigma(Y^{i,N}(t_\dd),Y^{i,N}(t_\dd -\tau),\mu_{t}^{\lambda, Y,N},\mu_{t_\dd-\tau}^{Y,N})(Y^{j,N}(t_\dd) + \lambda \Delta Y^{j,N}(t) )\Delta Y^{j,N}(t) \, \d \lambda,
\end{align*}
with the definition
\begin{equation*}
\mu_{t}^{\lambda, Y,N}(\mathrm{d}x) := \frac{1}{N} \sum_{j=1}^{N} \delta_{Y^{j,N}(t_{\delta}) + \lambda \Delta Y^{j,N}(t) }(\mathrm{d}x).
\end{equation*}
By virtue of ({\bf AD}$_\si^3$) and employing similar estimates as in (\ref{eqH}), we derive
\begin{align}\label{eqG}
& \mathbb{E} \Big[ \Big| \Xi_4 - \frac{1}{N}\sum_{j=1}^{N}  \partial_{\mu_1} \sigma(Y^{i,N}(t_\dd),Y^{i,N}(t_\dd -\tau) ,\mu_{t_\dd}^{Y,N},\mu_{t_{\dd}-\tau}^{Y,N})(Y^{j,N}(t_\dd)) \Delta Y^{j,N}(t)  \Big|^2 \Big] \nonumber \\
& = \mathbb{E} \Big[ \Big| \int_{0}^{1} \frac{1}{N} \sum_{j=1}^{N}  \partial_{\mu_1} \sigma(Y^{i,N}(t_\dd),Y^{i,N}(t_\dd -\tau) ,\mu_{t}^{\lambda, Y,N},\mu_{t_\dd-\tau}^{Y,N})(Y^{j,N}(t_\dd) + \lambda \Delta Y^{j,N}(t)) \Delta Y^{j,N}(t) \, \d \lambda  \nonumber \\
& \qquad - \int_{0}^{1} \frac{1}{N}\sum_{j=1}^{N}  \partial_{\mu_1} \sigma(Y^{i,N}(t_\dd),Y^{i,N}(t_\dd -\tau) ,\mu_{t_\dd}^{Y,N},\mu_{t_{\dd}-\tau}^{Y,N})(Y^{j,N}(t_\dd))  \Delta Y^{j,N}(t)  \, \d \lambda \Big|^2 \Big] \nonumber \\
& \1 \delta^2.
\end{align}
Analogous computations hold true if $\Xi_4$ is replaced by $\Xi_3$. Above considerations motivate to express
\begin{align}
\Gamma_t^{i,k} &= \sigma(Y^{i,N}(t),Y^{i,N}(t-\tau), \mu_{t}^{Y,N},\mu_{t-\tau}^{Y,N}) - \sigma(Y^{i,N}(t_\dd),Y^{i,N}(t_\dd -\tau), \mu_{t_\dd}^{Y,N},\mu_{t_\dd-\tau}^{Y,N}) \nonumber \\
& \qquad - \sum_{l=1}^{2} \partial_{x_l}\sigma(\Pi(Y_{t_\dd }^{i,N}), \Pi(\mu_{t_\dd}^{Y,N})) \tilde{\si}(t_\dd +s_l,\Pi(Y_{t_\dd +s_l}^{i,N}),\Pi(\mu_{t_\dd + s_l}^{Y,N})) \int_{t_\dd +s_l}^{t + s_l} \d
W^i(r)  \nonumber \\
& \qquad - \sum_{l=1}^{2} \ff{1}{N}\sum_{j=1}^N   \partial_{\mu_l} \si(\Pi(Y^{i,N}(t_\dd)),\Pi(\mu_{t_\dd}^{Y,N}))(Y^{j,N}(t_\dd+s_l))\tilde{\si}(t_\dd +s_l,\Pi(Y_{t_\dd +s_l}^{j,N}),\Pi(\mu_{t_\dd + s_l}^{Y,N})) \int_{t_\dd + s_l}^{t+s_l}\d W^j(r)  \nonumber \\
& = \sigma(Y^{i,N}(t),Y^{i,N}(t-\tau), \mu_{t}^{Y,N},\mu_{t-\tau}^{Y,N}) - \sigma(Y^{i,N}(t_\dd),Y^{i,N}(t_\dd -\tau), \mu_{t_\dd}^{Y,N},\mu_{t_\dd-\tau}^{Y,N})  \nonumber \\
& \qquad - \partial_{x_1}\sigma(Y^{i,N}(t_\dd), Y^{i,N}(t_\dd -\tau),\mu_{t_\dd}^{Y,N},\mu_{t_\dd-\tau}^{Y,N}) \Delta Y^{i,N}(t)  \nonumber \\
& \qquad - \partial_{x_2}\sigma(Y^{i,N}(t_\dd), Y^{i,N}(t_\dd -\tau),\mu_{t_\dd}^{Y,N},\mu_{t_\dd-\tau}^{Y,N}) \Delta Y^{i,N}(t-\tau)   \nonumber \\
&  \qquad - \frac{1}{N}\sum_{j=1}^{N}  \partial_{\mu_1} \sigma(Y^{i,N}(t_\dd), Y^{i,N}(t_\dd -\tau), \mu_{t_\dd}^{Y,N},\mu_{t_{\dd}-\tau}^{Y,N})(Y^{j,N}(t_{\dd}))  \Delta Y^{j,N}(t)  \nonumber \\
& \qquad  - \frac{1}{N}\sum_{j=1}^{N}  \partial_{\mu_2} \sigma(Y^{i,N}(t_\dd), Y^{i,N}(t_\dd -\tau), \mu_{t_\dd}^{Y,N},\mu_{t_{\dd}-\tau}^{Y,N})(Y^{j,N}(t_{\dd}-\tau)) \Delta Y^{j,N}(t-\tau)   \nonumber \\
& \qquad - \sum_{l=1}^{2} \partial_{x_l}\sigma(\Pi(Y_{t_\dd }^{i,N}), \Pi(\mu_{t_\dd}^{Y,N})) \tilde{\si}(t_\dd +s_l,\Pi(Y_{t_\dd +s_l}^{i,N}),\Pi(\mu_{t_\dd + s_l}^{Y,N})) \int_{t_\dd +s_l}^{t + s_l} \d
W^i(r) \label{eqA} \\
& \qquad - \sum_{l=1}^{2} \ff{1}{N}\sum_{j=1}^N   \partial_{\mu_l} \si(\Pi(Y^{i,N}(t_\dd)),\Pi(\mu_{t_\dd}^{Y,N}))(Y^{j,N}(t_\dd+s_l))\tilde{\si}(t_\dd +s_l,\Pi(Y_{t_\dd +s_l}^{j,N}),\Pi(\mu_{t_\dd + s_l}^{Y,N})) \int_{t_\dd + s_l}^{t+s_l}\d W^j(r) \label{eqB} \\ 
& \qquad + \partial_{x_1}\sigma(Y^{i,N}(t_\dd), Y^{i,N}(t_\dd -\tau),\mu_{t_\dd}^{Y,N},\mu_{t_\dd-\tau}^{Y,N}) \Delta Y^{i,N}(t)  \label{eqC}  \\
& \qquad + \partial_{x_2}\sigma(Y^{i,N}(t_\dd), Y^{i,N}(t_\dd -\tau),\mu_{t_\dd}^{Y,N},\mu_{t_\dd-\tau}^{Y,N}) \Delta Y^{i,N}(t-\tau)  \label{eqD} \\
&  \qquad + \frac{1}{N}\sum_{j=1}^{N}  \partial_{\mu_1} \sigma(Y^{i,N}(t_\dd), Y^{i,N}(t_\dd -\tau), \mu_{t_\dd}^{Y,N},\mu_{t_{\dd}-\tau}^{Y,N})(Y^{j,N}(t_{\dd})) \Delta Y^{j,N}(t)  \label{eqE} \\
& \qquad  + \frac{1}{N}\sum_{j=1}^{N}  \partial_{\mu_2} \sigma(Y^{i,N}(t_\dd), Y^{i,N}(t_\dd -\tau), \mu_{t_\dd}^{Y,N},\mu_{t_{\dd}-\tau}^{Y,N})(Y^{j,N}(t_{\dd}-\tau))  \Delta Y^{j,N}(t - \tau)  \label{eqF}.
\end{align}
Notice that \eqref{eq:one-step} implies
\begin{align*}
& \partial_{x_1}\sigma(Y^{i,N}(t_\dd), Y^{i,N}(t_\dd -\tau),\mu_{t_\dd}^{Y,N},\mu_{t_\dd-\tau}^{Y,N}) \Delta Y^{i,N}(t)   \\
& = \partial_{x_1}\sigma(Y^{i,N}(t_\dd), Y^{i,N}(t_\dd -\tau),\mu_{t_\dd}^{Y,N},\mu_{t_\dd-\tau}^{Y,N}) \Bigg( b_{\delta}(Y^{i,N}(t_\dd), \mu_{t_\dd}^{Y,N})(t-t_\dd) \\
& \quad + \sigma(\Pi(Y^{i}_{t_\dd}),\Pi(\mu_{t_\dd}^{Y,N}))\int_{t_\dd}^{t} \, \mathrm{d}W^{i}(s)  \nonumber \\
& \quad + \sum_{l=1}^{2} \partial_{x_l}\sigma(\Pi(Y^{i,N}_{t_\dd}),\Pi(\mu_{t_\dd}^{Y,N})) \tilde{\sigma}({\color{black}t_\dd + s_l},\Pi(Y^{i,N}_{t_\dd + s_l}),\Pi(\mu_{t_\dd +s_l}^{Y,N})) \int_{t_\dd}^{t} \int_{t_\dd +s_l}^{s+ s_l}  \, \mathrm{d} W^{i}(u)\, \mathrm{d} W^{i}(s)  \nonumber \\
& \quad + \sum_{l=1}^{2} \frac{1}{N}\sum_{j = 1}^{N}  \partial_{\mu_l} \sigma(\Pi(Y^{i,N}(t_\dd)), \Pi(\mu_{t_\dd}^{Y,N}))(Y^{j,N}(t_\dd +s_l))  \nonumber \\
& \hspace{2cm} \times \tilde{\sigma}({\color{black}t_\dd + s_l},\Pi(Y_{t_\dd+s_l}^{j,N}), \Pi(\mu_{t_\dd+s_l}^{Y,N}))  \ \int_{t_\dd}^{t} \int_{t_\dd +s_l}^{s+ s_l}  \, \mathrm{d} W^{j}(u)\, \mathrm{d} W^{i}(s) \Bigg). 
\end{align*}
Similar expressions can be derived for equations (\ref{eqD}), (\ref{eqE}) and (\ref{eqF}) given above. From this, we observe that (\ref{eqA}) and (\ref{eqB}) cancel with non-higher order terms appearing in (\ref{eqC}), (\ref{eqD}), (\ref{eqE}) and (\ref{eqF}). The remaining terms are readily proven to be of order {\color{black}$\delta$}. Recalling the estimates (\ref{eqG}) and (\ref{eqH}) allows us to deduce the claim.  
\end{proof}
\begin{lem}\label{Lem3D} 
We define for {\color{black}$t \in [0,T]$}
\begin{equation*}
\hat\Upsilon_t^{i}:= \partial_x b(Y^{i,N}({t_\dd}),\mu_{t_\dd}^{Y,N}) \si(\Pi(Y_{t_\dd }^{i,N}),\Pi(\mu_{t_\dd }^{Y,N})) \int_{t_\dd}^{t} 
\, \mathrm{d}W^i(r).
\end{equation*}
Let Assumptions ({\bf AD}$_b^1$), ({\bf AD}$_\si^1$)--({\bf AD}$_\si^3$), ({\bf H$_1$}) hold {\color{black}(and set $k=2$ for simplicity)}. Then, for all $\vv>0$ and $p \geq 2$, there
exists $C>0$ such that
\begin{align}\label{p0D}
& \Big(\E \Big[\Big \|\int_0^{\cdot} \<Z^{i,N}(s),\hat\Upsilon_s^{i}\>\, \d
s\Big\|_{\infty,t}^p \Big] \Big)^{\ff{1}{p}} \le
\vv  ( (\E [\|Z^{i,N}\|^{2p}_{\8,t}])^{\ff{1}{p}} + {\color{black} (\E [\|Z^{i,N}\|^{2p}_{\8,t-\tau}(1+\|X^{i,N}\|^{2pq_2}_{\8,t-\tau} + \|Y^{i,N}\|^{2pq_2}_{\8,t-\tau}) }])^{\ff{1}{p}} ) \notag \\
&+ C \Big\{\int_0^t 
(\E[|Z^{i,N}(s)|^{2p}])^{\ff{1}{p}}\, \d s  
 + {\color{black} \int_0^{0 \lor (t-\tau)}  (\E [|Z^{i,N}(s)|^{2p}(1+|X^{i,N}(s)|^{2pq_2} + |Y^{i,N}(s)|^{2pq_2})])^{\ff{1}{p}}\, \d
s}    + \dd^2\Big\},
\end{align}
for $t \in [0,T]$.
The same holds if $\hat\Upsilon_t^{i}$ is replaced by 
\begin{align*}
\frac{1}{N} \sum_{j=1}^{N} \partial_{\mu} b(Y^{i,N}({t_\dd }),\mu_{t_\dd }^{Y,N}
)(Y^{j,N}(t_\dd)) \si(\Pi(Y_{t_\dd}^{j,N}),\Pi(\mu_{t_\dd}^{Y,N})) \int_{t_\dd}^{t} 
\, \mathrm{d}W^j(r).
\end{align*} 
\end{lem}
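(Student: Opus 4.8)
The plan is to extract the Brownian increment hidden inside $\hat\Upsilon_s^i$. Setting $G_{s_\dd}^i:=\partial_x b(Y^{i,N}(s_\dd),\mu_{s_\dd}^{Y,N})\,\tilde\si(\Pi(Y_{s_\dd}^{i,N}),\Pi(\mu_{s_\dd}^{Y,N}))$, which is $\mathcal F_{s_\dd}$-measurable and has bounded moments of every order by Lemma \ref{TH:TH3}, we may write $\hat\Upsilon_s^i=G_{s_\dd}^i\,(W^i(s)-W^i(s_\dd))$, so that $\hat\Upsilon_s^i$ is conditionally centred given $\mathcal F_{s_\dd}$. I would split the integrand along the time mesh, $Z^{i,N}(s)=Z^{i,N}(s_\dd)+\big(Z^{i,N}(s)-Z^{i,N}(s_\dd)\big)$, and bound the two resulting contributions separately. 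The whole point is that a crude absolute-value bound destroys the centring and is too lossy, so the gain must come from the zero conditional mean of $W^i(s)-W^i(s_\dd)$.

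For the ``frozen'' contribution $\int_0^{\cdot}\<Z^{i,N}(s_\dd),\hat\Upsilon_s^i\>\,\d s$, I would observe that over a completed mesh interval $[t_n,t_{n+1}]$ it equals $\<Z^{i,N}(t_n),G_{t_n}^i\>\,\eta_n^i$ with $\eta_n^i:=\int_{t_n}^{t_{n+1}}(W^i(s)-W^i(t_n))\,\d s$, which satisfies $\E[\eta_n^i\mid\mathcal F_{t_n}]=0$ and $\E[(\eta_n^i)^2\mid\mathcal F_{t_n}]\1\dd^3$. Hence the partial sums over completed intervals form a discrete martingale, and after controlling the single running (incomplete) interval by a direct moment estimate, the discrete Burkholder--Davis--Gundy inequality gives a bound by $\E\big[(\sum_n\<Z^{i,N}(t_n),G_{t_n}^i\>^2(\eta_n^i)^2)^{p/2}\big]$. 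Estimating $\<Z^{i,N}(t_n),\cdot\>^2\le\|Z^{i,N}\|_{\infty,t}^2|G_{t_n}^i|^2$, pulling out the supremum, applying Cauchy--Schwarz in $\omega$, and using $\E[(\sum_n|G_{t_n}^i|^2(\eta_n^i)^2)^p]\1 M^p\dd^{3p}\1\dd^{2p}$ (there are $M\1\dd^{-1}$ summands, each of size $\dd^3$), this contribution is $\1\dd\,(\E[\|Z^{i,N}\|_{\infty,t}^{2p}])^{1/(2p)}$. The elementary Young inequality $\dd a\le\vv a^2+C_\vv\dd^2$ then turns this into $\vv\,(\E[\|Z^{i,N}\|_{\infty,t}^{2p}])^{1/p}+C\dd^2$, i.e.\ precisely the first two terms of \eqref{p0D}.

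The delicate part, and the main obstacle, is the correction $\int_0^{\cdot}\<Z^{i,N}(s)-Z^{i,N}(s_\dd),\hat\Upsilon_s^i\>\,\d s$: here $|Z^{i,N}(s)-Z^{i,N}(s_\dd)|\1\dd^{1/2}$ and $|\hat\Upsilon_s^i|\1\dd^{1/2}$ only yield order $\dd$ after integration, which is too weak. Instead I would expand $Z^{i,N}(s)-Z^{i,N}(s_\dd)=\int_{s_\dd}^s\d Z^{i,N}(r)$ into its drift part and its martingale part $\int_{s_\dd}^s(\si(\Pi(X_r^{i,N}),\Pi(\mu_r^{X,N}))-\Upsilon_r^{i,k})\,\d W^i(r)$, and exploit the centred increment in $\hat\Upsilon_s^i$ a second time. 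Pairing the martingale part with $W^i(s)-W^i(s_\dd)=\int_{s_\dd}^s\d W^i(u)$ and using the It\^o isometry conditionally on $\mathcal F_{s_\dd}$ replaces the product by $\int_{s_\dd}^s(\si(\Pi(X_r^{i,N}),\cdot)-\Upsilon_r^{i,k})\,\d r$; since $\si-\Upsilon=(\si(X)-\si(Y))+\Gamma^{i,k}$ is $O(|Z^{i,N}|)+O(\dd)$ in $L^{2p}$, the latter from the estimate \eqref{F22D} of Lemma \ref{Lemm1D} (the Milstein correction $\Psi$ having removed the $O(\dd^{1/2})$ part), each completed-interval summand contributes at most $\dd\,\E|Z^{i,N}(s_\dd)|+\dd^2$ in mean; the drift part, again paired with the centred factor, is treated the same way. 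Collecting terms, using $\dd\,\E|Z^{i,N}(s)|\le\tfrac12(\E[|Z^{i,N}(s)|^{2p}])^{1/p}+\tfrac12\dd^2$-type Young inequalities and a further BDG step for the running supremum in $t$, this contribution is bounded by $C\{\int_0^t(\E[|Z^{i,N}(s)|^{2p}])^{1/p}\,\d s+\dd^2\}$, completing \eqref{p0D}. The measure-derivative variant has an identical structure: the replacement is again a centred $W^j$-increment multiplied by $\mathcal F_{s_\dd}$-measurable, moment-bounded prefactors, so the frozen-coefficient martingale argument and the conditional It\^o-isometry estimate apply verbatim, with the independence of $W^j$ and $W^i$ for $j\ne i$ only helping. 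The crux throughout is to resist bounding absolute values and to systematically harvest the order-$\dd^{1/2}$ gain coming from the conditional centring of the Brownian increment in $\hat\Upsilon$.
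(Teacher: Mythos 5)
The paper itself gives no argument for this lemma beyond deferring to \cite[Lemma 3.3]{stock}, and your reconstruction is, in substance, the proof of that cited lemma transplanted to the delay setting: split $Z^{i,N}(s)=Z^{i,N}(s_\dd)+\bigl(Z^{i,N}(s)-Z^{i,N}(s_\dd)\bigr)$, use that $\hat\Upsilon_s^{i}$ is an $\mathcal F_{s_\dd}$-measurable, moment-bounded prefactor times a conditionally centred Brownian increment so that the frozen contribution is a martingale controlled by (discrete, or after a stochastic Fubini continuous) BDG at order $\dd\,(\E[\|Z^{i,N}\|_{\infty,t}^{2p}])^{1/(2p)}$, and convert with Young's inequality $\dd a\le\vv a^2+C_\vv\dd^2$. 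Your bookkeeping for the frozen part, the quadratic-variation count $M^p\dd^{3p}\1\dd^{2p}$, and the incomplete running interval are all correct.

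The one place where your route is more elaborate than necessary is the correction term. For its martingale part the conditional It\^o isometry is not actually needed: since $\si(\Pi(X_r^{i,N}),\Pi(\mu_r^{X,N}))-\Upsilon_r^{i,k}$ is already of size $O(|Z^{i,N}|)+O(\dd)$ in the $2p$-th moment by \eqref{F22D} (this is precisely what the Milstein correction buys), the brute-force product bound $\dd^{1/2}\bigl((\E[|Z^{i,N}|^{2p}])^{1/(2p)}+\dd\bigr)\cdot\dd^{1/2}$ already yields $\dd\,(\E[|Z^{i,N}|^{2p}])^{1/(2p)}+\dd^2$, which Young's inequality puts in the asserted form; this also sidesteps the point you gloss over, namely that once the running supremum is dominated by $\int_0^t|\cdot|\,\d s$ one can no longer invoke conditional cancellations inside the absolute value. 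The second use of the centring (or, equivalently, a further split of $b(\Pi(X_r^{i,N}),\cdot)-b_\dd(\Pi(Y_{r_\dd}^{i,N}),\cdot)$ into an $O(\dd^{1/2})$ modulus-of-continuity piece and an $O(|Z^{i,N}(r_\dd)|+\dd)$ taming/Lipschitz piece) is genuinely needed only for the drift contribution to $Z^{i,N}(s)-Z^{i,N}(s_\dd)$, where a crude bound would stall at $\dd^{3/2}$, exactly as you identified. With either route the estimate closes, so I see no gap.
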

\begin{proof}
The result can be proven following the same steps as \cite[Lemma 3.2]{stock} and is therefore omitted. 
\end{proof}
\begin{lem}\label{Lem2D}
Let Assumptions ({\bf AD}$_b^1$)--({\bf AD}$_b^3$), ({\bf
AD}$_\si^1$)--({\bf AD}$_\si^3$), ({\bf H$_1$}) hold {\color{black}(and set $k=2$ for simplicity)}. Then, for all $\vv>0$ and $p \geq 2$, there
exists a constant $C>0$ such that 
\begin{equation}\label{P4D}
\begin{split}
&\int_0^t(\E[|\<Z^{i,N}(s),b(Y^{i,N}(s),\mu_s^{Y,N})-b(Y^{i,N}({s_\dd}),\mu_{s_\dd}^{Y,N})\>|^p])^{\ff{1}{p}}\, \d
s\\
&\le\vv ( \,(\E[\|Z^{i,N}\|^{2p}_{\8,t}])^{\ff{1}{p}} + {\color{black} (\E [\|Z^{i,N}\|^{2p}_{\8,t-\tau}(1+\|X^{i,N}\|^{2pq_2}_{\8,t-\tau} + \|Y^{i,N}\|^{2pq_2}_{\8,t-\tau})])^{\ff{1}{p}} })\\
& \quad + C\Big\{\int_0^t
(\E[|Z^{i,N}(s)|^{2p}])^{\ff{1}{p}}\, \d s + {\color{black} \int_0^{0 \lor (t-\tau)}  (\E [|Z^{i,N}(s)|^{2p}(1+|X^{i,N}(s)|^{2pq_2} + |Y^{i,N}(s)|^{2pq_2})])^{\ff{1}{p}}\, \d
s}   +\dd^2\Big\},
\end{split}
\end{equation}
for $t \in [0,T]$.
\end{lem}
\begin{proof}
We will restrict the subsequent discussion to $d=m=1$ for ease of notation. We may write
\begin{align}
& Z^{i,N}(s) \left( b(Y^{i,N}(s),\mu_s^{Y,N})-b(Y^{i,N}(s_\dd),\mu_{s_\dd}^{Y,N}) \right) \nonumber \\
&= Z^{i,N}(s) \left( b(Y^{i,N}(s),\mu_s^{Y,N})-b(Y^{i,N}({s_\dd}),\mu_{s_\dd}^{Y,N}) \right) \label{eqDB} \\
& \quad - Z^{i,N}(s) \partial_xb(Y^{i,N}({s_\dd}),\mu_{s_\dd}^{Y,N})  \Delta Y^{i,N}(s)  \label{eqDC} \\
& \quad - Z^{i,N}(s) \frac{1}{N} \sum_{j=1}^{N} \partial_{\mu} b(Y^{i,N}({s_\dd}),\mu_{s_\dd}^{Y,N})(Y^{j,N}(s_\dd)) \Delta Y^{j,N}(s) \label{eqDD} \\
& \quad + Z^{i,N}(s)  \partial_xb(Y^{i,N}({s_\dd}),\mu_{s_\dd}^{Y,N}) \Delta Y^{i,N}(s) \label{eqDG} \\
& \quad + Z^{i,N}(s) \frac{1}{N} \sum_{j=1}^{N} \partial_{\mu} b(Y^{i,N}({s_\dd}),\mu_{s_\dd}^{Y,N})(Y^{j,N}(s_\dd)) \Delta Y^{j,N}(s). \label{eqDH}
\end{align}
The terms (\ref{eqDB})--(\ref{eqDD}) can be estimated using the techniques employed in the proof of Lemma \ref{Lemm1D}. In particular, we have due to Young's inequality
\begin{align*}
& \int_0^t \E \Big[ \Big| Z^{i,N}(s) \Big(b(Y^{i,N}(s),\mu_s^{Y,N})-b(Y^{i,N}({s_\dd}),\mu_{s_\dd}^{Y,N}) \\
& \hspace{2.5cm} -  \partial_xb(Y^{i,N}({s_\dd}),\mu_{s_\dd}^{Y,N}) \Delta Y^{i,N}(s) \\
& \hspace{2.5cm} - \frac{1}{N} \sum_{j=1}^{N} \partial_{\mu} b(Y^{i,N}({s_\dd}),\mu_{s_\dd}^{Y,N})(Y^{j,N}(s_\dd)) \Delta Y^{j,N}(s) \Big) \Big|^p \Big] \, \d
s\\
& \leq \vv\, \E[\|Z^{i,N}\|^{2p}_{\8,t}] + C\dd^{2p}. 
\end{align*} 
The remaining two terms (\ref{eqDG}) and (\ref{eqDH}) can be estimated similar to \cite[Lemma 10]{K} in combination with Lemma \ref{Lem3D}.
\end{proof}
\begin{remark}
\label{rem:anticip}
The above proof reveals why we assumed the drift to be independent of the delay variables. In case of explicit delay dependence, we would have to investigate, for $l >1$, a term of the form 
\begin{equation*}
 Z^{i,N}(s) \partial_{x_l} b(\Pi(Y^{i,N}(s_\dd)),\Pi(\mu_{s_\dd}^{Y,N})) \left(Y^{i,N}(s+s_l) - Y^{i,N}(s_\dd+s_l)  \right).
\end{equation*}
However, the equation for the difference $Y^{i,N}(s-\tau) - Y^{i,N}(s_\dd-\tau)$ depends on the delayed Brownian increment $W^{i}(s-\tau) - W^{i}(s_\dd-\tau)$, which makes $Z^{i,N}(s)$ anticipative. This difficulty does not appear in the case of globally Lipschitz continuous coefficients. The reason is the following: In the super-linear growth setting, one applies It\^{o}'s formula to $| Z^{i,N}(s)|^2$ in order to employ the one-sided Lipschitz assumption. The strong convergence analysis for globally Lipschitz continuous coefficients does not require this and therefore the problematic term discussed in this remark does not appear.
\end{remark}
\noindent
\textbf{Proof of Theorem \ref{THDelay:THDelay4}:} 
\begin{proof}
{\color{black}We aim to show that
\begin{equation*}\label{E5}
\mathbb{E} \left[ \|Z^{i,N} \|_{\infty,T}^p \right] \1 \delta^{p},
\end{equation*}
for any $p \geq 2$. By It\^{o}'s formula together with
$Z^{i,N}(0) = 0$, it follows, for $t \geq 0$, that 
\begin{equation*}
\begin{split}
|Z^{i,N}(t)|^2&= 2\int_0^t \left \langle Z^{i,N}(s),b(X^{i,N}(s),\mu_s^{X,N
})-b_{\delta}(Y^{i,N}(s_{\delta}),\mu_{s_{\delta}}^{Y,N}) \right \rangle \, \mathrm{d}s
+2\int_0^t \left \langle Z^{i,N}(s), \mathrm{d}M_s^{i,k} \right \rangle \\
&\quad+\int_0^t\|\sigma(\Pi(X^{i,N}_s),\Pi(\mu_s^{X,N}))-\Upsilon_s^{i,k}\|^2 \, \mathrm{d}s,
\end{split}
\end{equation*}
where $M^{i,k}_t$ and $\Upsilon^{i,k}_t$ for $t \in [0,T]$ are defined in \eqref{Delay1}.
The first term from above will be estimated based on the decomposition
\begin{equation*}
\begin{split}
\int_0^t \left \langle Z^{i,N}(s),b(X^{i,N}(s),\mu_s^{X,N
})-b_{\delta}(Y^{i,N}(s_{\delta}),\mu_{s_{\delta}}^{Y,N}) \right \rangle \, \mathrm{d}s =\Pi_t^{1,i}+\Pi_t^{2,i}+\Pi_t^{3,i}+\Pi_t^{4,i},
\end{split}
\end{equation*}
where we defined
\begin{align*}
& \Pi_t^{1,i} := \int_0^t \left \langle Z^{i,N}(s),b(X^{i,N}(s),\mu_s^{X,N
})-b(Y^{i,N}(s),\mu_s^{X,N}) \right \rangle \, \mathrm{d}s,\\
& \Pi_t^{2,i} := \int_0^t \left \langle Z^{i,N}(s),b(Y^{i,N}(s),\mu_s^{X,N
})-b(Y^{i,N}(s),\mu_s^{Y,N}) \right \rangle \, \mathrm{d}s,\\
& \Pi_t^{3,i} := \int_0^t \left \langle Z^{i,N}(s),b(Y^{i,N}(s_{\delta}),\mu_{s_{\delta}}^{Y,N})-b_{\delta}(Y^{i,N}(s_{\delta}),\mu_{s_{\delta}}^{Y,N}) \right \rangle \, \mathrm{d}s,\\
& \Pi_t^{4,i} := \int_0^t \left \langle Z^{i,N}(s),b(Y^{i,N}(s),\mu_s^{Y,N})-b(Y^{i,N}(s_{\delta}),\mu_{s_{\delta}}^{Y,N}) \right \rangle \, \mathrm{d}s.
\end{align*}
Employing ({\bf AD}$_b^1$), Lemma \ref{Lem2D} and Lemma \ref{TH:TH3}, shows that
\begin{align*}
\sum_{l=1}^4 \mathbb{E}[\|\Pi^{l,i}\|^{2p}_{\infty,t}] &\leq \varepsilon (\mathbb{E}[\| Z^{i,N} \|^{2p}_{\infty,t}] + {\color{black} \E [\|Z^{i,N}\|^{2p}_{\8,t-\tau}(1+\|X^{i,N}\|^{2pq_2}_{\8,t-\tau} + \|Y^{i,N}\|^{2pq_2}_{\8,t-\tau})] })\\
&+ C \Big \{ \delta^{2p} + \mathbb{E} \int_{0}^{t} | Z^{i,N}(s) |^{2p} \, \mathrm{d}s  
 + \mathbb{E} \int_{0}^{0 \lor (t-\tau)} | Z^{i,N}_s |^{2p} (1+ | X^{i,N}(s) |^{2pq_2} +  | Y^{i,N}(s) |^{2pq_2}) \, \mathrm{d}s \Big \}.  
\end{align*}
The remaining parts of Theorem \ref{THDelay:THDelay4} can be readily shown using above auxiliary results and the techniques employed in the proof of \cite[Theorem 2.1]{stock} combined with an inductive argument (using subintervals of length $\tau$), illustrated in Proposition \ref{Prop:MomentDelayMcKean}.}
\end{proof}

\section{Proof of Theorem \ref{TheoremAntithetic}}
\label{panti}

\subsection{Some auxiliary results}
We start by giving some standard results for stability and one-step error of the processes $Y^{i,N,a}$ and $Y^{i,N,f}$. 
\begin{lem}\label{lemma:differenceTimestep}
Let Assumptions ({\bf AAD}$_b^1$)--({\bf AAD}$_b^3$) and ({\bf AAD}$_\si^1$)--({\bf AAD}$_\si^2$) hold. Let $Y^{i,N,a}$ and $Y^{i,N,f}$ be defined as above. Then, for any $p \geq 2$ there exists a constant $C>0$ such that 
\begin{align*}
& \max_{i \in \mathbb{S}_N}  \max_{n \in \lbrace 0, \ldots, M \rbrace} \mathbb{E} \left[  |Y^{i,N,f}(t_n)|^{p} \right] \leq C, \qquad \max_{i \in  \mathbb{S}_N} \max_{n \in \lbrace 0, \ldots, M-1 \rbrace} \mathbb{E} [|Y^{i,N,f}(t_{n+1/2}) - Y^{i,N,f}(t_n)|^p] \leq C \delta^{p/2}, \\
& \max_{i \in \mathbb{S}_N} \max_{n \in \lbrace 0, \ldots, M \rbrace} \mathbb{E} \left[ |Y^{i,N,a}(t_n)|^{p} \right] \leq C, \qquad \max_{i \in  \mathbb{S}_N} \max_{n \in \lbrace 0, \ldots, M-1 \rbrace} \mathbb{E} [|Y^{i,N,a}(t_{n+1/2}) - Y^{i,N,a}(t_n)|^{p}] \leq C \delta^{p/2}.
\end{align*} 
\begin{proof}
The moment stability of $Y^{i,N,f}$ and $Y^{i,N,a}$ is a consequence of the main results stated in Section \ref{Section:Sec2}. For the one-step errors, we remark that H\"{o}lder's inequality yields
\begin{align*}
& |Y^{i,N,f}({t_{n+1/2}}) - Y^{i,N,f}({t_{n}})|^{p} \\
& \leq C \Big\{  |b_\dd(Y^{i,N,f}(t_n), \mu_{t_n}^{Y,N,f}) \delta/2|^{p} + |\sigma(Y^{i,N,f}(t_n), Y^{i,N,f}(t_n- \tau)) \delta W^{i}_n|^p \nonumber \\
& \qquad + \Big| \sigma(Y^{i,N,f}(t_n), Y^{i,N,f}(t_n- \tau)) \partial_{x_1} \sigma (Y^{i,N,f}(t_n), Y^{i,N,f}(t_n- \tau)) \frac{(\delta W^{i}_n)^2- \delta/2}{2} \Big|^p \nonumber  \\
& \qquad + \Big|\sigma(Y^{i,N,f}(t_n- \tau), Y^{i,N,f}(t_n- 2\tau)) \partial_{x_2} \sigma (Y^{i,N,f}(t_n), Y^{i,N,f}(t_n- \tau))\frac{\delta W^{i}_n \delta W^{i}_{n- \tau}}{2} \Big|^p \Big \}.
\end{align*}
Then, taking expectations on both sides, employing the growth assumptions on the coefficients along with the moment stability of $Y^{i,N,f}$ and standard estimates for Brownian increments allows to deduce the claim.
\end{proof}
\end{lem}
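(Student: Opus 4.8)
The plan is to establish the two assertions---moment stability and the half-step increment bound---separately, and in each case to reduce the antithetic process $Y^{i,N,a}$ to the fine process $Y^{i,N,f}$, which is itself nothing but the modified tamed Milstein scheme \eqref{eq:coarsepath} run on the refined grid with mesh-size $\delta/2$.

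First, for moment stability, I note that both $Y^{i,N,f}$ and $Y^{i,N,a}$ are generated by iterating the one-step map $\mathscr{S}$ at step-size $\delta/2$. For the fine path this is the scheme \eqref{eq:coarsepath} verbatim, so its moment bound is exactly Lemma \ref{lem_coarse} (equivalently, the moment-boundedness part of Theorem \ref{THDelay:THDelay4}) with $\delta$ replaced by $\delta/2$; that argument relies only on the one-sided Lipschitz property (\textbf{AAD}$_b^1$)(1), the polynomial growth of $b$ and the linear growth of $\sigma$ implied by (\textbf{AAD}$_\sigma^1$), the taming of the drift through Scheme~2 (which yields $|b_{\delta/2}|\,(\delta/2)\le C\sqrt\delta$ uniformly), the conditional-mean-zero property of the stochastic increment terms given the state at the left endpoint of each step, and a discrete Gronwall inequality applied inductively over subintervals of length $\tau$ as in Proposition \ref{Prop:MomentDelayMcKean}. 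For the antithetic path, the only difference is that within each coarse interval $[t_n,t_{n+1}]$ the two current Brownian increments $\delta W^i_n$ and $\delta W^i_{n+1/2}$ (and their $\tau$-delayed counterparts) are used in swapped order. Since for a Brownian motion the increments over successive disjoint sub-intervals are i.i.d. $N(0,\delta/2)$, and since the increment $\delta W^i_{n+1/2}$ used on the first antithetic half-step lies in the future interval $[t_{n+1/2},t_{n+1}]$ and is therefore independent of the $\sigma$-algebra generated by the data entering $Y^{i,N,a}(t_n)$, the antithetic path retains exactly the adaptedness and conditional-martingale structure exploited by the moment argument. Hence the same estimate carries over and yields $\max_i\mathbb{E}[\max_n|Y^{i,N,a}(t_n)|^p]\le C$.

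Second, for the half-step increment I follow the computation sketched after the statement. Writing $Y^{i,N,f}(t_{n+1/2})-Y^{i,N,f}(t_n)$ as the sum of the drift, Euler-diffusion, Milstein-$x_1$ and Milstein-$x_2$ terms of $\mathscr{S}(\cdots,\delta/2,\cdots)$, I bound its $p$-th moment by a constant times the sum of the $p$-th moments of the four pieces, using $|\sum a_i|^p\le C\sum|a_i|^p$. The drift piece is $O(\delta^p)$ by the taming bound (or by polynomial growth of $b$ together with the moment bound just established); the Euler term is $O(\delta^{p/2})$ after conditioning on the state, since $\sigma$ has linear growth and $\mathbb{E}|\delta W^i_n|^p\le C\delta^{p/2}$; the Milstein-$x_1$ term is $O(\delta^p)$ because $\partial_{x_1}\sigma$ is bounded by (\textbf{AAD}$_\sigma^1$), so $\sigma\,\partial_{x_1}\sigma$ has at most linear growth while $\mathbb{E}|(\delta W^i_n)^2-\delta/2|^p\le C\delta^p$; and the Milstein-$x_2$ term is likewise $O(\delta^p)$, since its coefficient has linear growth and, because $\tau>0$ forces $\delta W^i_n$ and $\delta W^i_{n-\tau}$ to be increments over disjoint intervals and hence independent, $\mathbb{E}|\delta W^i_n\,\delta W^i_{n-\tau}|^p\le C\delta^p$ (separating the state-dependent coefficient from the increments by conditioning on $\mathcal{F}_{t_n}$ and one Cauchy--Schwarz step). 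Collecting the four contributions and using $\delta^p\le\delta^{p/2}$ for $\delta\le1$ gives $\max_i\max_n\mathbb{E}[|Y^{i,N,f}(t_{n+1/2})-Y^{i,N,f}(t_n)|^p]\le C\delta^{p/2}$. The identical bound for $Y^{i,N,a}$ follows because the swapped increments are again i.i.d. $N(0,\delta/2)$ with the same independence and measurability pattern relative to the state at the left endpoint.

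The genuinely routine parts here are the four increment estimates. The one point I would spell out explicitly---and which carries whatever subtlety the lemma has---is the reduction of the antithetic process to the fine one: verifying that interchanging the two half-step increments preserves both the conditional-mean-zero structure required for the Gronwall-based moment argument and the disjoint-interval independence $\delta W^i_n\perp\delta W^i_{n-\tau}$ used in the Milstein-$x_2$ estimate. Once this is checked, no analytic difficulty beyond those already handled in Section \ref{Section:Sec2} arises.
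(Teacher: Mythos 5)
Your proof is correct and follows essentially the same route as the paper: the moment stability is inherited from the Section \ref{Section:Sec2} results applied to the scheme run with mesh-size $\delta/2$, and the half-step increment is bounded term by term (drift, Euler, and the two Milstein terms) using the growth assumptions, the moment bounds, and standard estimates for Brownian increments, exactly as in the paper's argument. The only place you go beyond the paper's (terse) proof is in spelling out why the swapped increments of the antithetic path preserve the measurability and independence structure needed to transfer the fine-path estimates to $Y^{i,N,a}$ --- a point the paper leaves implicit, and which your exchangeability/conditioning argument handles correctly.
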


We proceed by stating another standard lemma, which bounds the difference of $Y^{i,N,f}$ and $Y^{i,N,a}$ over a coarse time-step and will be needed throughout this section:
\begin{lem}\label{lemma:antitheticLemma2}
Let Assumptions ({\bf AAD}$_b^1$)--({\bf AAD}$_b^3$) and ({\bf AAD}$_\si^1$)--({\bf AAD}$_\si^2$) hold. Then, for any $p \geq 2$ there exists a constant $C>0$ such that 
\begin{align*}
\max_{i \in \mathbb{S}_N} \max_{n \in \lbrace 0, \ldots, M \rbrace} \mathbb{E} [|Y^{i,N,f}(t_{n}) - Y^{i,N,a}(t_n)|^{p}] \leq C \delta^{p/2}.
\end{align*}
\end{lem}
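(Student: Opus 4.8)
The plan is to prove the slightly stronger statement that the bound holds at \emph{all} fine-grid points $t_{n/2}$, $n \in \{0,\ldots,2M\}$, by induction over the successive intervals $[0,\tau], [\tau, 2\tau], \ldots$ of length $\tau$, exactly as in the proof of Proposition \ref{Prop:MomentDelayMcKean}. Writing $D^{i,N}(t) := Y^{i,N,f}(t) - Y^{i,N,a}(t)$, I first note that on $[-\tau,0]$ both processes coincide with the deterministic initial datum $\xi^i$, so $D^{i,N} \equiv 0$ there; in particular, on the first interval $[0,\tau]$ all delayed arguments are deterministic and equal for the two schemes, which removes the delay-difference contributions and anchors the induction. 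On $[\tau,2\tau]$ the delayed nodes $t_n-\tau,\,t_n-2\tau$ lie in $[-\tau,\tau]$, where $D^{i,N}$ has already been controlled, and so on up to $T$.

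The central observation, following Giles--Szpruch \cite{MGLS}, is that over each coarse interval $[t_n, t_{n+1}]$ the two processes use the \emph{same} pair of half-step increments, only in swapped order, so that the total increments $\delta W^i_n + \delta W^i_{n+1/2}$ and $\delta W^i_{n-\tau}+\delta W^i_{n+1/2-\tau}$ agree. I would therefore compose the two half-step maps $\mathscr{S}$ over one coarse step and Taylor-expand $\sigma,\partial_{x_1}\sigma,\partial_{x_2}\sigma,b_\delta$ about the common starting value $Y^{i,N,f}(t_n)$, using ({\bf AAD}$_\si^1$), ({\bf AD}$_\si^2$) (bounded second derivatives of $\sigma$) and ({\bf AAD}$_b^3$) to control the Taylor remainders. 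The key point, to be verified explicitly, is that the leading $O(\delta^{1/2})$ diffusion contributions $\sigma(\cdot)(\delta W^i_n - \delta W^i_{n+1/2})$ produced in the first half-step are cancelled by those of the second half-step, while the Milstein correction terms (including the L\'evy-free products $\delta W^i_n\,\delta W^i_{n-\tau}$) agree to leading order; hence, starting from equal values, one coarse step changes $D^{i,N}$ only by a term of size $O(\delta)$.

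This yields a recursion of the schematic form $D^{i,N}(t_{n+1}) = D^{i,N}(t_n) + A^{i,N}_n + \mathcal{M}^{i,N}_n$, where $A^{i,N}_n$ is $\mathcal{F}_{t_n}$-measurable and, via the Lipschitz properties ({\bf AAD}$_b^1$)(2)--(3) and ({\bf AAD}$_\si^1$) together with the moment bounds of Lemma \ref{lemma:differenceTimestep}, is bounded by $C\delta$ times a linear combination of $|D^{i,N}|$ at $t_n$ and at the delayed/half nodes (the empirical-measure terms being handled by the fact that the particles are identically distributed), while $\mathcal{M}^{i,N}_n$ is a martingale increment with $\mathbb{E}[\mathcal{M}^{i,N}_n \mid \mathcal{F}_{t_n}] = 0$ and $(\mathbb{E}|\mathcal{M}^{i,N}_n|^p)^{1/p} \le C\delta$. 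Summing over $n$, the martingale part is estimated by a discrete Burkholder--Davis--Gundy inequality, giving $(\mathbb{E}|\sum_n \mathcal{M}^{i,N}_n|^p)^{1/p} \le C(\sum_n \delta^2)^{1/2} = C(M\delta^2)^{1/2} = C(T\delta)^{1/2} = O(\delta^{1/2})$ since $M\delta = T$; the remaining terms are closed by a discrete Gronwall inequality, applied interval by interval.

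The main obstacle is the explicit verification of this leading-order cancellation over each coarse step: one must check that, after the swap, both the $O(\delta^{1/2})$ diffusion terms and the $O(\delta^{1/2})$-scale parts of the Milstein corrections cancel, so that the genuinely new per-coarse-step contribution is a mean-zero increment of size $O(\delta)$ rather than $O(\delta^{1/2})$. This is precisely what upgrades the estimate: naively summing $O(\delta^{1/2})$ injections over $M = O(\delta^{-1})$ coarse steps would only yield an $O(1)$ bound, whereas the cancellation turns them into $M$ mean-zero increments of size $O(\delta)$, whose martingale sum is $O(\delta^{1/2})$. Secondary technical care is needed for the delayed Brownian increments appearing in the L\'evy-free products and for the measure-dependent terms, both of which are accommodated by the identical distribution of the particles and the $\tau$-interval induction.
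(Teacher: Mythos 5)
Your proposal is correct and follows essentially the same route as the paper, which omits the details and simply defers to \cite[Lemma 4.6]{MGLS}: the swapped half-step increments, the cancellation of the $O(\delta^{1/2})$ diffusion contributions over each coarse step, the resulting mean-zero $O(\delta)$ injections (here including the non-cancelling delayed product $\delta W^i_n\,\delta W^i_{n+1/2-\tau}-\delta W^i_{n-\tau}\,\delta W^i_{n+1/2}$, which is conditionally centred since the delayed increments are $\mathcal{F}_{t_n}$-measurable), and the BDG--Gronwall closure combined with the induction over intervals of length $\tau$ are exactly the intended adaptation of that argument to the delay setting. The only point to tighten in a full write-up is the treatment of the super-linearly growing drift: in the Gronwall step you should use the one-sided Lipschitz condition ({\bf AAD}$_b^1$)(1) (which transfers to $b_\delta$) for the cross term rather than the local Lipschitz bound (2) alone, since the latter produces a moment-order mismatch.
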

\begin{proof}
The proof follows analogous steps to \cite[Lemma 4.6]{MGLS} in combination with Lemma \ref{lem_coarse} and is therefore omitted.
\end{proof}

In a next step, we will represent $Y^{i,N,f}$ and $Y^{i,N,a}$, {\color{black}defined by \eqref{antithetic:f} and \eqref{antithetic:a}, respectively}, over a single coarse time-step.
This is necessary in order to give 
an approximation result for $\overline{Y}^{i,N,f}$, see \eqref{Y_bar}, on a coarse grid, which consequently will enable us to derive an estimate for the difference between $\overline{Y}^{i,N,f}$ and $Y^{i,N,c}$, see \eqref{eq:coarsepath}, over a coarse grid in an $L_2$-sense: 
\begin{lem}\label{lemma:AntiLemma1}
{\color{black}
Let Assumptions ({\bf AAD}$_b^1$)--({\bf AAD}$_b^3$) and ({\bf AAD}$_\si^1$)--({\bf AAD}$_\si^2$) hold and let $p \geq 2$. Let $x \in \lbrace f,a \rbrace$ and we use the convention $\mathrm\mathrm{sign}(f): = -$ and $\mathrm{sign}(a):=+$. For all $n \in \lbrace 0, \ldots, M-1 \rbrace$, there exist $N_{i,n,x}$ and $M_{i,n,x}$,
where $\mathbb{E} \left[M_{i,n,x}| \mathcal{F}_{t_n} \right] = 0$ and for some constant $C >0$
\begin{align*}
& \max_{n \in \lbrace 0, \ldots, M-1 \rbrace} \mathbb{E}[|M_{i,n,x}|^p] \leq C \delta^{3p/2}, \quad \max_{n \in \lbrace 0, \ldots, M- 1 \rbrace} \mathbb{E}[|N_{i,n,x}|^p] \leq C \delta^{2p},
\end{align*} 
such that the difference equation for $Y^{i,N,x}$ can be written as 
\begin{align*}
 Y^{i,N,x} ({t_{n+1}}) &= \mathscr{S}\left(Y^{i,N,x}({t_{n}}), Y^{i,N,x}({t_{n}-\tau}), Y^{i,N,x}({t_{n}- 2\tau}), \mu_{t_n}^{Y,N,x}, \delta, \Delta W^{i}_n, \Delta W^{i}_n \Delta W^{i}_{n- \tau} \right) \\ 
& \quad \mathrm{sign}(x) \sigma(Y^{i,N,x}(t_{n}- \tau), Y^{i,N,x}(t_n- 2\tau)) \partial_{x_2} \sigma (Y^{i,N,x}(t_{n}), Y^{i,N,x}(t_n- \tau)) \\
& \qquad \times \frac{1}{2} \left( \delta W^{i}_{n} \delta W^{i}_{n+1/2- \tau} - \delta W^{i}_{n- \tau} \delta W^{i}_{n+1/2} \right) + N_{i,n,x} + M_{i,n,x}.
\end{align*}}
\end{lem}
\begin{proof}
{\color{black}We observe that $N_{i,n,f} + M_{i,n,f}$ can be expressed as the sum of the following terms indexed by $f$ (and analogously for $N_{i,n,a} + M_{i,n,a}$):}
\begin{align*}
R_{i,n,x} &:= (b(Y^{i,N,x}(t_{n+1/2}), \mu_{t_{n+1/2}}^{Y,N,x}) - b(Y^{i,N,x}(t_n), \mu_{t_n}^{Y,N,x})) \frac{\delta}{2} \\
B_{i,n,x} &:=   (b(Y^{i,N,x}(t_n), \mu_{t_n}^{Y,N,x}) - b_\dd(Y^{i,N,x}(t_n), \mu_{t_n}^{Y,N,x})) \frac{\delta}{2} \\ 
& \qquad + (b_\dd(Y^{i,N,x}(t_{n+1/2}), \mu_{t_{n+1/2}}^{Y,N,x})  - b(Y^{i,N,x}(t_{n+1/2}), \mu_{t_{n+1/2}}^{Y,N,x})) \frac{\delta}{2} \\ 
M^{(2)}_{i,n,f} &:=  \Big(\sigma(Y^{i,N,f}(t_{n+1/2}), Y^{i,N,f}(t_{n+1/2}- \tau))  - \sigma(Y^{i,N,f}(t_n), Y^{i,N,f}(t_n- \tau))  \\
& \qquad -  \sigma(Y^{i,N,f}(t_{n}), Y^{i,N,f}(t_n- \tau)) \partial_{x_1} \sigma(Y^{i,N,f}(t_{n}), Y^{i,N,f}(t_n- \tau))\delta W^{i}_{n} \\
& \qquad -  \sigma(Y^{i,N,f}(t_{n}- \tau), Y^{i,N,f}(t_n- 2\tau))  \partial_{x_2} \sigma(Y^{i,N,f}(t_{n}), Y^{i,N,f}(t_n- \tau))\delta W^{i}_{n- \tau} \Big) \delta W^{i}_{n+1/2}  \\
{\color{black} M^{(2)}_{i,n,a}} &:=  \Big(\sigma(Y^{i,N,a}(t_{n+1/2}), Y^{i,N,a}(t_{n+1/2}- \tau))  - \sigma(Y^{i,N,a}(t_n), Y^{i,N,a}(t_n- \tau))  \\
& \qquad -  \sigma(Y^{i,N,a}(t_{n}), Y^{i,N,a}(t_n- \tau)) \partial_{x_1} \sigma(Y^{i,N,a}(t_{n}), Y^{i,N,a}(t_n- \tau))\delta W^{i}_{n +1/2} \\
& \qquad -  \sigma(Y^{i,N,a}(t_{n}- \tau), Y^{i,N,a}(t_n- 2\tau))  \partial_{x_2} \sigma(Y^{i,N,a}(t_{n}), Y^{i,N,a}(t_n- \tau))\delta W^{i}_{n +1/2- \tau} \Big) \delta W^{i}_{n} \\
M^{(3)}_{i,n,f} & :=  \Big(\sigma(Y^{i,N,f}(t_{n+1/2}), Y^{i,N,f}(t_{n+1/2}- \tau)) \partial_{x_1} \sigma(Y^{i,N,f}(t_{n+1/2}), Y^{i,N,f}(t_{n+1/2}- \tau))  \\
& \qquad - \sigma(Y^{i,N,f}(t_{n}), Y^{i,N,f}(t_n- \tau))  \partial_{x_1} \sigma(Y^{i,N,f}(t_{n}), Y^{i,N,f}(t_n- \tau)) \Big)\frac{(\delta W^{i}_{n+1/2})^2 - \delta/2}{2} \\
{\color{black} M^{(3)}_{i,n,a}} & :=  \Big(\sigma(Y^{i,N,a}(t_{n+1/2}), Y^{i,N,a}(t_{n+1/2}- \tau)) \partial_{x_1} \sigma(Y^{i,N,a}(t_{n+1/2}), Y^{i,N,a}(t_{n+1/2}- \tau))  \\
& \qquad - \sigma(Y^{i,N,a}(t_{n}), Y^{i,N,a}(t_n- \tau))  \partial_{x_1} \sigma(Y^{i,N,a}(t_{n}), Y^{i,N,a}(t_n- \tau)) \Big)\frac{(\delta W^{i}_{n})^2 - \delta/2}{2} \\
M^{(4)}_{i,n,f} & :=  \Big(\sigma(Y^{i,N,f}(t_{n+1/2}- \tau), Y^{i,N,f}(t_{n+1/2}- 2\tau))  \partial_{x_2} \sigma (Y^{i,N,f}(t_{n+1/2}), Y^{i,N,f}(t_{n+1/2}- \tau)) \\
& \qquad -\sigma(Y^{i,N,f}(t_{n}- \tau), Y^{i,N,f}(t_n- 2\tau))  \partial_{x_2} \sigma (Y^{i,N,f}(t_{n}), Y^{i,N,f}(t_n- \tau))\Big)  \frac{\delta W^{i}_{n+1/2} \delta W^{i}_{n+1/2- \tau}}{2} \\
{\color{black}M^{(4)}_{i,n,a}} & :=  \Big(\sigma(Y^{i,N,a}(t_{n+1/2}- \tau), Y^{i,N,a}(t_{n+1/2}- 2\tau))  \partial_{x_2} \sigma (Y^{i,N,a}(t_{n+1/2}), Y^{i,N,a}(t_{n+1/2}- \tau)) \\
& \qquad -\sigma(Y^{i,N,a}(t_{n}- \tau), Y^{i,N,a}(t_n- 2\tau))  \partial_{x_2} \sigma (Y^{i,N,a}(t_{n}), Y^{i,N,a}(t_n- \tau))\Big)  \frac{\delta W^{i}_{n} \delta W^{i}_{n- \tau}}{2}.
\end{align*}
{\color{black}In the following, for simplicity, we will restrict the discussion to $x=f$}. We first observe that {\color{black}
\begin{align*}
B_{i,n,f} &= \left( \frac{\delta|b(Y^{i,N,f}(t_n), \mu_{t_n}^{Y,N,f})|^{\bar{q}} b(Y^{i,N,f}(t_n), \mu_{t_n}^{Y,N,f})}{1 + \delta|b(Y^{i,N,f}(t_n), \mu_{t_n}^{Y,N,f})|^{\bar{q}}} \right) \frac{\delta}{2}  \\
& \qquad - \frac{\delta|b(Y^{i,N,f}(t_{n+1/2}), \mu_{t_{n+1/2}}^{Y,N,f})|^{\bar{q}} b(Y^{i,N,f}(t_{n+1/2}), \mu_{t_{n+1/2}}^{Y,N,f})}{1 + \delta|b(Y^{i,N,f}(t_{n+1/2}), \mu_{t_{n+1/2}}^{Y,N,f})|^{\bar{q}}}  \frac{\delta}{2},
\end{align*}
which implies $\max_{\lbrace 0, \ldots, M-1 \rbrace} \mathbb{E}[|B_{i,n,f}|^{p}] \leq C \delta^{2p}$, due to Lemma \ref{lemma:differenceTimestep} and ({\bf AAD}$_b^1$). }

Next, we aim to analyse the term $R_{i,n,f}$ and assume for ease of notation that $b(x,\mu) = b_1(x) + b_2(\mu)$ for any $x \in \R$, $\mu \in \mathcal{P}_2(\R)$. Then, we may write 
\begin{align*}
& b(Y^{i,N,f}(t_{n+1/2}), \mu_{t_{n+1/2}}^{Y,N,f}) - b(Y^{i,N,f}(t_n), \mu_{t_n}^{Y,N,f}) \\
&= b_1(Y^{i,N,f}(t_{n+1/2})) - b_1(Y^{i,N,f}(t_n))+ b_2(\mu_{t_{n+1/2}}^{Y,N,f}) - b_2(\mu_{t_n}^{Y,N,f}), 
\end{align*}
and compute the expansion
\begin{align*}
b_1(Y^{i,N,f}(t_{n+1/2})) - b_1(Y^{i,N,f}(t_n)) &= \partial_x b_1(Y^{i,N,f}(t_n))(Y^{i,N,f}(t_{n+1/2})-Y^{i,N,f}(t_n)) 
\\
& \quad +  \frac{1}{2} \partial^{2}_x b_1(\xi^{i,N}) (Y^{i,N,f}(t_{n+1/2})-Y^{i,N,f}(t_n))^2,
\end{align*}
where $\xi^{i,N}$ lies on the line between $Y^{i,N,f}(t_{n+1/2})$ and $Y^{i,N,f}(t_{n})$. In case the drift is not decomposable, we have to additionally analyse {\color{black}
\begin{align*}
&\partial_x b(Y^{i,N,f}(t_n),\mu_{t_{n+1/2}}^{Y,N,f})(Y^{i,N,f}(t_{n+1/2})-Y^{i,N,f}(t_n))  \\
& = \partial_x b(Y^{i,N,f}(t_n),\mu_{t_{n}}^{Y,N,f})(Y^{i,N,f}(t_{n+1/2})-Y^{i,N,f}(t_n)) \\
&\quad  + \left(\partial_x b(Y^{i,N,f}(t_n),\mu_{t_{n+1/2}}^{Y,N,f}) - \partial_x b(Y^{i,N,f}(t_n),\mu_{t_{n}}^{Y,N,f}) \right)(Y^{i,N,f}(t_{n+1/2})-Y^{i,N,f}(t_n)),    
\end{align*}
which along with ({\bf AAD}$_b^2$) allows to treat this term analogously to $N^{(1)}_{i,n,f}$ precisely introduced below.}

We continue by writing, 
\begin{align*}
(b_1(Y^{i,N,f}(t_{n+1/2}) - b_1(Y^{i,N,f}(t_n))) \frac{\delta}{2} = M_{i,n,f}^{(1)} + N^{(1)}_{i,n,f},
\end{align*}
where we introduced
\begin{align*}
M_{i,n,f}^{(1)} &:= \partial_x b_1(Y^{i,N,f}(t_n)) \sigma(Y^{i,N}(t_n), Y^{i,N}(t_n- \tau)) \delta W^{i}_{n} \frac{\delta}{2}  \\
N^{(1)}_{i,n,f} &:= \partial_x b_1(Y^{i,N,f}(t_n)) \Big( b_\dd(Y^{i,N,f}(t_n), \mu_{t_n}^{Y,N,f}) \frac{\delta}{2} \\
& \quad + \sigma(Y^{i,N,f}(t_n), Y^{i,N,f}(t_n- \tau)) \partial_{x_1} \sigma(Y^{i,N,f}(t_n), Y^{i,N,f}(t_n- \tau)) \frac{(\delta W^{i}_{n})^2- \delta/2}{2} \\
& \quad + \sigma(Y^{i,N,f}(t_n-\tau), Y^{i,N,f}(t_n- 2\tau)) \partial_{x_2} \sigma(Y^{i,N,f}(t_n), Y^{i,N,f}(t_n- \tau)) \frac{\delta W^{i}_{n} \delta W^{i}_{n- \tau}}{2} \Big) \frac{\delta}{2} \\
& \quad + \frac{1}{2} \partial^{2}_{x}b_1(\xi^{i,N}) (Y^{i,N,f}(t_{n+1/2})-Y^{i,N,f}(t_n))^2 \frac{\delta}{2}. 
\end{align*}
Using the notation {\color{black}$\tilde{\mu}_{t_{n}}^{Y,N,f,\lambda}(\mathrm{d}x) := \frac{1}{N} \sum_{j=1}^{N} \delta_{\lambda Y^{i,N,f}(t_{n + 1/2}) + (1-\lambda)Y^{i,N,f}(t_n)}(\mathrm{d}x)$}, where $\lambda \in [0,1]$, and 
\begin{align*}
\Delta Y^{i,N}(t_{n+1/2}) :&= Y^{i,N,f}(t_{n+1/2}) - Y^{i,N,f}(t_n)\\
& = b_\dd(Y^{i,N,f}(t_n), \mu_{t_n}^{Y,N,f}) \frac{\delta}{2} \\
& \quad + \sigma(Y^{i,N,f}(t_n), Y^{i,N,f}(t_n- \tau)) \delta W^{i}_{n}  \\
& \quad + \sigma(Y^{i,N,f}(t_n), Y^{i,N,f}(t_n- \tau)) \partial_{x_1} \sigma(Y^{i,N,f}(t_n), Y^{i,N,f}(t_n- \tau)) \frac{(\delta W^{i}_{n})^2- \delta/2}{2} \\
& \quad + \sigma(Y^{i,N,f}(t_n-\tau), Y^{i,N,f}(t_n-2\tau))\partial_{x_2} \sigma(Y^{i,N,f}(t_n), Y^{i,N,f}(t_n- \tau))\frac{\delta W^{i}_{n} \delta W^{i}_{n- \tau}}{2},
\end{align*}
we obtain, as $\mathcal{P}_2(\R) \ni \mu \mapsto b(x,\mu)$ is continuously $L$-differentiable  
\begin{align*}
&\left( b_2(\mu_{t_{n+1/2}}^{Y,N,f}) - b_2(\mu_{t_{n}}^{Y,N,f}) \right)\frac{\delta}{2} \\
& = \int_{0}^{1}  \frac{\d}{\d \lambda} b_2(\tilde{\mu}_{t_{n}}^{Y,N,f,\lambda})  \, \mathrm{d}\lambda \ \frac{\delta}{2}\\
& = \int_{0}^{1} \frac{\delta}{2N} \sum_{j=1}^{N} \partial_{\mu}b_2(\tilde{\mu}_{t_{n}}^{Y,N,f,\lambda})(\lambda Y^{j,N,f}(t_{n + 1/2}) + (1-\lambda)Y^{j,N,f}(t_n))(Y^{j,N,f}(t_{n + 1/2})-Y^{j,N,f}(t_{n}))  \, \mathrm{d}\lambda  \\
& =  \int_{0}^{1} \frac{\delta}{2N}\sum_{j=1}^{N} \partial_{\mu}b_2(\tilde{\mu}_{t_{n}}^{Y,N,f,\lambda}) (Y^{j,N,f}(t_{n}) + \lambda \Delta Y^{j,N}(t_{n+1/2})  ) \Delta Y^{j,N}(t_{n+1/2})  \, \mathrm{d}\lambda \\
& =  \frac{\delta}{2N} \sum_{j=1}^{N} \partial_{\mu}b_2(\mu_{t_{n}}^{Y,N,f}) (Y^{i,N,f}(t_{n})) \Delta Y^{j,N}(t_{n+1/2})  \\
& \quad +   \frac{\delta}{2N} \sum_{j=1}^{N} \int_{0}^{1}  \left(\partial_{\mu}b_2(\tilde{\mu}_{t_{n}}^{Y,N,f,\lambda}) (Y^{j,N,f}(t_{n}) + \lambda \Delta Y^{j,N}(t_{n+1/2}))  - \partial_{\mu}b_2(\mu_{t_{n}}^{Y,N,f}) (Y^{j,N,f}(t_{n})) \right) \Delta Y^{j,N}(t_{n+1/2})   \, \mathrm{d}\lambda   \\
&=: R_{i,n,f}^{(1)} + R_{i,n,f}^{(2)}.
\end{align*}
Note that $R_{i,n,f}^{(1)}$ can be decomposed as
\begin{equation*}
R_{i,n,f}^{(1)} = R_{i,n,f}^{(1,1)} + R_{i,n,f}^{(1,2)},
\end{equation*}
where 
\begin{align*}
& \mathbb{E}\left[ R_{i,n,f}^{(1,1)} | \mathcal{F}_{t_n} \right] = 0, \quad \max_{n \in \lbrace 0, \ldots, M-1 \rbrace} \mathbb{E}[|R_{i,n,f}^{(1,1)}|^p] \leq C \delta^{3p/2}, \quad  \max_{n \in \lbrace 0, \ldots, M-1 \rbrace} \mathbb{E}[|R_{i,n,f}^{(1,2)}|^p] \leq C \delta^{2p}.
\end{align*}
{\color{black}The terms $N_{i,n,x}$ and $M_{i,n,x}$ introduced in Lemma \ref{lemma:AntiLemma1} can be written as 
\begin{align*}
N_{i,n,x} &= N^{(1)}_{i,n,x} + B_{i,n,x} + R^{(2)}_{i,n,x} + R^{(1,2)}_{i,n,x} \\
M_{i,n,x} &= M^{(1)}_{i,n,x} + M^{(2)}_{i,n,x} + M^{(3)}_{i,n,x}+ M^{(4)}_{i,n,x} + R^{(1,1)}_{i,n,x}.
\end{align*}
We continue to show the claimed moment estimates for $R_{i,n,f}^{(2)}$ and the martingale terms.} Due to assumption ({\bf AAD}$_b^2$), we obtain
\begin{align*}
|R_{i,n,f}^{(2)}| & \leq  \frac{\delta}{2N} \sum_{j=1}^{N} \int_{0}^{1} \left| \partial_{\mu}b_2(\tilde{\mu}_{t_{n}}^{Y,N,f,\lambda}) (Y^{j,N,f}(t_{n}) + \lambda \Delta Y^{j,N}(t_{n+1/2}))  - \partial_{\mu}b_2(\mu_{t_{n}}^{Y,N,f}) (Y^{j,N,f}(t_{n})) \right| | \Delta Y^{j,N}(t_{n+1/2})|  \, \mathrm{d}\lambda  \\
& \leq \frac{C\delta}{2N} \sum_{j=1}^{N} \int_{0}^{1} | \mathcal{W}_2(\tilde{\mu}_{t_{n}}^{Y,N,f,\lambda},\mu_{t_{n}}^{Y,N,f}) {\color{black}+} \lambda \Delta Y^{j,N}(t_{n+1/2})| |\Delta Y^{j,N}(t_{n+1/2})|   \, \mathrm{d}\lambda \\
& \leq  \frac{C\delta}{2N} \sum_{j=1}^{N} \int_{0}^{1} \left| \left(\frac{1}{N} \sum_{l=1}^{N} |Y^{l,N,f}(t_{n + 1/2})-Y^{l,N,f}(t_{n})|^2 \right)^{1/2} {\color{black}+} \lambda \Delta Y^{j,N}(t_{n+1/2}) \right| |\Delta Y^{j,N}(t_{n+1/2})|   \, \mathrm{d}\lambda.
\end{align*}
Using the fact that all particles are identically distributed, Lemma \ref{lemma:differenceTimestep} gives, for $p \geq 2$,  
\begin{equation*}
  \max_{n \in \lbrace 0, \ldots, M-1 \rbrace} \mathbb{E}[|R_{i,n,f}^{(2)}|^p] \leq C \delta^{2p}.
\end{equation*}
We continue with estimating the martingale terms. The above expression for $M_{i,n,f}^{(2)}$ can be rewritten as
{\color{black}
\begin{align*}
M_{i,n,f}^{(2)} &= \partial_{x_1} \sigma(Y^{i,N,f}(t_n), Y^{i,N,f}(t_n- \tau)) Q^{i,f}(t_n) \delta W^{i}_{n+1/2} \\
& \quad + \partial_{x_2} \sigma(Y^{i,N,f}(t_n), Y^{i,N,f}(t_n- \tau)) Q^{i,f}(t_n- \tau) \delta W^{i}_{n+1/2} \\
& \quad + \left(\frac{1}{2} \sum_{j,k=1}^{2} \partial^2_{x_j x_k} \sigma(\xi_2^{i,N}, \xi_3^{i,N})  \Delta Y^{i,f}(t_{n+1/2}-(j-1)\tau)  \Delta Y^{i,f}(t_{n+1/2}-(k-1)\tau) \right) \delta W^{i}_{n+1/2},
\end{align*}
where 
\begin{align*}
Q^{i,f}(t_n) & := b_\dd(Y^{i,N,f}(t_n), \mu_{t_n}^{Y,N,f}) \frac{\delta}{2} \\
& \quad + \sigma(Y^{i,N,f}(t_n), Y^{i,N,f}(t_n- \tau)) \partial_{x_1} \sigma(Y^{i,N,f}(t_n), Y^{i,N,f}(t_n- \tau)) \frac{(\delta W^{i}_{n})^2- \delta/2}{2} \\
& \quad + \sigma(Y^{i,N,f}(t_n-\tau), Y^{i,N,f}(t_n-2\tau))\partial_{x_2} \sigma(Y^{i,N,f}(t_n), Y^{i,N,f}(t_n- \tau))\frac{\delta W^{i}_{n} \delta W^{i}_{n- \tau}}{2},
\end{align*}
}
and $\xi_2^{i,N}, \xi_3^{i,N}$ on the line between $Y^{i,N,f}(t_n)$ and $Y^{i,N,f}(t_{n+1/2})$ and $Y^{i,N,f}(t_n- \tau)$ and $Y^{i,N,f}(t_{n+1/2}- \tau)$, respectively.

Further, considering $M_{i,n,f}^{(3)}$ and $M_{i,n,f}^{(4)}$, we have using a first order Taylor series expansion {\color{black}
\begin{align*}
M_{i,n,f}^{(3)} = \left(\sum_{j=1}^{2} \partial_{x_j} \bar{\sigma}(\xi_4^{i,N},\xi_5^{i,N})   \Delta Y^{i,f}(t_{n+1/2}-(j-1)\tau) \right) \frac{(\delta W^{i}_{n+1/2})^2 - \delta/2}{2},
\end{align*}
where 
\begin{align*}
\bar{\sigma}(x,y) = \sigma(x,y) \partial_x \sigma(x,y),
\end{align*}
and $\xi_4^{i,N}, \xi_5^{i,N}$ on the line between $Y^{i,N,f}(t_n)$ and $Y^{i,N,f}(t_{n+1/2})$ and $Y^{i,N,f}(t_n- \tau)$ and $Y^{i,N,f}(t_{n+1/2}- \tau)$, respectively. Additionally, we have 
\begin{align*}
M_{i,n,f}^{(4)} = \left(\sum_{j=1}^{2} \partial_{x_j} \hat{\sigma}(\xi_6^{i,N},\xi_7^{i,N})   \Delta Y^{i,f}(t_{n+1/2}-(j-1)\tau) \right) \frac{\delta W^{i}_{n+1/2} \delta W^{i}_{n+1/2- \tau}}{2}, 
\end{align*}}
where 
\begin{align*}
\hat{\sigma}(x,y) = \sigma(x,y) \partial_y \sigma(x,y),
\end{align*}
and $\xi_6^{i,N}, \xi_7^{i,N}$ on the line between $Y^{i,N,f}(t_n- \tau)$ and $Y^{i,N,f}(t_{n+1/2}- \tau)$ and $Y^{i,N,f}(t_n- 2\tau)$ and $Y^{i,N,f}(t_{n+1/2}- 2\tau)$, respectively. From here the claimed estimates for the moments of $M_{i,n,f}^{(2)}$, $M_{i,n,f}^{(3)}$ and $M_{i,n,f}^{(4)}$ follow.
\end{proof}

We have  
the following approximation of the 
antithetic scheme: 
\begin{lem}\label{AntitheticLemma2}
{\color{black}
Let Assumptions ({\bf AAD}$_b^1$)--({\bf AAD}$_b^3$) and ({\bf AAD}$_\si^1$)--({\bf AAD}$_\si^2$) hold and let $p \geq 2$. Then for each $n \in \lbrace 0,\ldots, M-1 \rbrace$ there exist $\tilde{M}_{i,n}$
with $\mathbb{E} \left[\tilde{M}_{i,n}| \mathcal{F}_{t_n} \right] = 0$ and $p$-th moments of order $\delta^{3p/2}$,
and $\tilde{B}_{i,n}$ and $\tilde{N}_{i,n}$ with $p$-th moments of order $\delta^{2p}$, such that the difference equation for $\overline{Y}^{i,N,f}$ reads
\begin{align*}
\overline{Y}^{i,N,f}(t_{n+1}) &= \mathscr{S}\left(\overline{Y}^{i,N,f}({t_{n}}), \overline{Y}^{i,N,f}({t_{n}-\tau}), \overline{Y}^{i,N,f}({t_{n}- 2\tau}), \mu_{t_n}^{\overline{Y},N}, \delta, \Delta W^{i}_n, \Delta W^{i}_n \Delta W^{i}_{n- \tau} \right) \\
& \quad + \frac{1}{2}(N_{i,n,f} + N_{i,n,a} + M_{i,n,f} + M_{i,n,a})  + \tilde{B}_{i,n} + \tilde{N}_{i,n} + \tilde{M}_{i,n},
\end{align*}
where $N_{i,n,x}$ and $M_{i,n,x}$ are defined as in Lemma \ref{lemma:AntiLemma1} and we set
\begin{align*}
\mu_{t_{n}}^{\overline{Y},N}(\mathrm{d}x):= \frac{1}{N} \sum_{j=1}^{N} \delta_{\overline{Y}^{j,N,f}(t_{n})}(\mathrm{d}x).
\end{align*}}
\end{lem}

%
%
%
%
%
\begin{proof}
In Lemma \ref{AntitheticLemma2} we introduced the abbreviations
\begin{align*}
\tilde{N}_{i,n} &:= \left( \frac{1}{2} \left( b(Y^{i,N,f}(t_{n}), \mu_{t_{n}}^{Y,N,f}) + b(Y^{i,N,a}(t_{n}), \mu_{t_{n}}^{Y,N,a})  \right) - b(\overline{Y}^{i,N,f}(t_{n}), \mu_{t_{n}}^{\overline{Y},N}) \right) \delta \\
\tilde{B}_{i,n} & := \left( \frac{1}{2} \left( (b_\dd- b)(Y^{i,N,f}(t_{n}), \mu_{t_{n}}^{Y,N,f}) + (b_\dd-b)(Y^{i,N,a}(t_{n}), \mu_{t_{n}}^{Y,N,a})  \right) - (b-b_\dd)(\overline{Y}^{i,N,f}(t_{n}), \mu_{t_{n}}^{\overline{Y},N}) \right) \delta,
\end{align*}
and $\tilde{M}_{i,n}$ can be decomposed in the form
\begin{align*}
\tilde{M}_{i,n}^{(1)} &:= \Big( \frac{1}{2} \left( \sigma(Y^{i,N,f}(t_n), Y^{i,N,f}(t_n- \tau)) + \sigma(Y^{i,N,a}(t_n), Y^{i,N,a}(t_n- \tau)) \right) \\ 
& \qquad - \sigma(\overline{Y}^{i,N,f}(t_n), \overline{Y}^{i,N,f}(t_n- \tau)) \Big) \Delta W^{i}_n \\  
\tilde{M}_{i,n}^{(2)} &:= \Bigg( \frac{1}{2} \Big( \sigma(Y^{i,N,f}(t_{n}), Y^{i,N,f}(t_n- \tau)) \partial_{x_1} \sigma(Y^{i,N,f}(t_{n}), Y^{i,N,f}(t_n- \tau)) \\
& \qquad +\sigma(Y^{i,N,a}(t_{n}), Y^{i,N,a}(t_n- \tau)) \partial_{x_1} \sigma(Y^{i,N,a}(t_{n}), Y^{i,N,a}(t_n- \tau)) \Big) \\
& \qquad  - \sigma(\overline{Y}^{i,N,f}(t_n), \overline{Y}^{i,N,f}(t_n- \tau))  \partial_{x_1} \sigma(\overline{Y}^{i,N,f}(t_n), \overline{Y}^{i,N,f}(t_n- \tau)) \Bigg)\frac{(\Delta W^{i}_n)^2- \delta}{2}   \\
\tilde{M}_{i,n}^{(3)} &:= \Bigg( \frac{1}{2} \Big( \sigma(Y^{i,N,f}(t_{n}-\tau), Y^{i,N,f}(t_n- 2\tau)) \partial_{x_2} \sigma(Y^{i,N,f}(t_{n}), Y^{i,N,f}(t_n- \tau)) \\
& \qquad +\sigma(Y^{i,N,a}(t_{n}-\tau), Y^{i,N,a}(t_n- 2\tau)) \partial_{x_2} \sigma(Y^{i,N,a}(t_{n}), Y^{i,N,a}(t_n- \tau)) \Big) \\
& \qquad  - \sigma(\overline{Y}^{i,N,a}(t_n-\tau), \overline{Y}^{i,N,a}(t_n- 2\tau)) \partial_{x_2} \sigma(\overline{Y}^{i,N,a}(t_n), \overline{Y}^{i,N,a}(t_n- \tau)) \Bigg)  \frac{\Delta W^{i}_n \Delta W^{i}_{n- \tau}}{2} \\
\tilde{M}_{i,n}^{(4)} &:= \Big( \sigma(Y^{i,N,a}(t_{n}- \tau), Y^{i,N,a}(t_n- 2\tau)) \partial_{x_2}\sigma(Y^{i,N,a}(t_{n}), Y^{i,N,a}(t_n- \tau)) \\
& \qquad -\sigma(Y^{i,N,f}(t_{n}- \tau), Y^{i,N,f}(t_n- 2\tau))\partial_{x_2} \sigma(Y^{i,N,f}(t_{n}), Y^{i,N,f}(t_n- \tau))\Big)  \\
& \qquad \times \frac{1}{2}(\delta W^{i}_{n} \delta W^{i}_{n+1/2- \tau} - \delta W^{i}_{n- \tau} \delta W^{i}_{n+1/2} ).
\end{align*}
In what follows, we analyse the term $\tilde{N}_{i,n}$ {\color{black}assuming for ease of notation the drift term to be decomposable as in the previous Lemma (the assertion for $\tilde{B}_{i,n}$ follows employing similar arguments to the ones used for the term $B_{i,n,x}$ in Lemma \ref{lemma:AntiLemma1})}. We define, for $\lambda \in [0,1]$ and $n \in \lbrace 0, \ldots, M \rbrace$
{\color{black}
\begin{align*}
m_{t_n}^{f,\lambda}(\mathrm{d}x) :&= \frac{1}{N} \sum_{j=1}^{N} \delta_{(1-\lambda)\overline{Y}^{j,N,f}(t_n) + \lambda Y^{j,N,f}(t_n)}(\mathrm{d}x), \\
m_{t_n}^{a,\lambda}(\mathrm{d}x) :&= \frac{1}{N} \sum_{j=1}^{N} \delta_{(1-\lambda)\overline{Y}^{j,N,f}(t_n) + \lambda Y^{j,N,a}(t_n)}(\mathrm{d}x),  
\end{align*}
and observe that for $x \in \lbrace f,a \rbrace$
\begin{align*}
&b_2(\mu_{t_n}^{\overline{Y},N} + (\mu_{t_n}^{Y,N,x}-\mu_{t_n}^{\overline{Y},N})) - b_2(\mu_{t_n}^{\overline{Y},N}) \\
& = \int_{0}^{1} \frac{\d}{\d \lambda}  b_2(m_{t_n}^{x,\lambda})  \, \mathrm{d}\lambda \ \\
& = \int_{0}^{1} \frac{1}{N} \sum_{j=1}^{N} \partial_{\mu}b_2(m_{t_n}^{x,\lambda})(\lambda Y^{j,N,x}(t_{n}) + (1-\lambda)\overline{Y}^{j,N,f}(t_n))(Y^{j,N,x}(t_{n})-\overline{Y}^{j,N,f}(t_{n}))  \, \mathrm{d}\lambda \\
& = \frac{1}{N} \sum_{j=1}^{N} \partial_{\mu}b_2(\mu_{t_n}^{\overline{Y},N}) (\overline{Y}^{j,N,f}(t_{n})) (Y^{j,N,x}(t_{n})-\overline{Y}^{j,N,f}(t_{n})) \\
& \quad +   \frac{1}{N} \sum_{j=1}^{N} \int_{0}^{1}  \Big(\partial_{\mu}b_2(m_{t_n}^{x,\lambda}) (\overline{Y}^{j,N,f}(t_{n}) + \lambda (Y^{j,N,x}(t_{n})-\overline{Y}^{j,N,f}(t_{n}))) \\
& \qquad  - \partial_{\mu}b_2(\mu_{t_n}^{\overline{Y},N}) (\overline{Y}^{j,N,f}(t_{n})) \Big)(Y^{j,N,x}(t_{n})-\overline{Y}^{j,N,f}(t_{n}))  \, \mathrm{d} \lambda. 
\end{align*}
}
{\color{black}Therefore, performing similar estimates as for $R^{(2)}_{i,n,f}$ in Lemma \ref{lemma:AntiLemma1} and Lemma \ref{lemma:antitheticLemma2} the moments of 
\begin{equation*}
\left( \frac{1}{2} \left( b_2(\mu_{t_{n}}^{Y,N,f}) + b_2(\mu_{t_{n}}^{Y,N,a})  \right)  -  b_2(\mu_{t_{n}}^{\overline{Y},N})  \right) \delta,
\end{equation*}
achieve the asserted order.}
A second order Taylor series expansion in the state component further gives 
\begin{align*}
&\left( \frac{1}{2} \left( b_1(Y^{i,N,f}(t_{n})) + b_1(Y^{i,N,a}(t_{n}))  \right) - b_1(\overline{Y}^{i,N,f}(t_{n})) \right) \delta \\
& \quad = \frac{1}{16} \left( \partial^2_{x} b_1(\xi_8^{i,N}) + \partial^2_x b_1(\xi_9^{i,N}) \right) (Y^{j,N,f}(t_{n}) - Y^{j,N,a}(t_{n}))^2 \delta,
\end{align*}
{\color{black}where $\xi_8^{i,N}$ and $\xi_9^{i,N}$ are on the line between $\overline{Y}^{i,N,f}(t_{n})$ and $Y^{i,N,f}(t_{n})$ and $\overline{Y}^{i,N,f}(t_{n})$ and $Y^{i,N,a}(t_{n})$, respectively. This allows to deduce the claim for $ \tilde{N}_{i,n}$ taking Lemma \ref{lemma:antitheticLemma2} into account}. Here, we only remark that  $\tilde{M}_{i,n}^{(1)}$ can be treated using a second order Taylor series expansions about $(\overline{Y}^{i,N,f}(t_n), \overline{Y}^{i,N,f}(t_n- \tau))$ and $ \tilde{M}_{i,n}^{(2)}, \tilde{M}_{i,n}^{(3)}$ by employing first order Taylor series expansions about $(\overline{Y}^{i,N,f}(t_n), \overline{Y}^{i,N,f}(t_n- \tau))$ or $(\overline{Y}^{i,N,f}(t_n- \tau), \overline{Y}^{i,N,f}(t_n- 2\tau))$, respectively. To further analyse $\tilde{M}_{i,n}^{(4)}$, one can also employ a Taylor series expansion argument and Lemma \ref{lemma:antitheticLemma2}. Note that all of these expressions are martingales and satisfy $\mathbb{E} [|\tilde{M}_{i,n}^{(k)}|^{p}] = \delta^{3p/2}$, for $k \in \lbrace 1, \ldots, 4 \rbrace$.
\end{proof}

\subsection{Proof of Theorem \ref{TheoremAntithetic}}
\begin{proof}
We analyse now the difference of the antithetic approximation and the coarse-path approximation, i.e., we obtain
\begin{align*}
&\mathbb{E}[|\overline{Y}^{i,N,f}(t_{n+1}) - Y^{i,N,c}(t_{n+1})|^2]  = \mathbb{E}[| \overline{Y}^{i,N,f}(t_{n}) - Y^{i,N,c}(t_{n}) \\
& \quad  + (b_\dd(\overline{Y}^{i,N,f}(t_{n})), \mu_{t_{n}}^{\overline{Y},N}) - b_\dd(Y^{i,N,c}(t_{n}), \mu_{t_{n}}^{Y,N,c})) \delta \\
& \quad + (\sigma(\overline{Y}^{i,N,f}(t_n), \overline{Y}^{i,N,f}(t_n- \tau)) - \sigma(Y^{i,N,c}(t_n), Y^{i,N,c}(t_n- \tau))) \Delta W^{i}_{n} \\
& \quad + \sigma(\overline{Y}^{i,N,f}(t_n), \overline{Y}^{i,N,f}(t_n- \tau)) \partial_{x_1} \sigma(\overline{Y}^{i,N,f}(t_n), \overline{Y}^{i,N,f}(t_n- \tau)) \frac{(\Delta W^{i}_{n})^2- \delta}{2} \\
& \quad - \sigma(Y^{i,N,c}(t_n), Y^{i,N,c}(t_n- \tau)) \partial_{x_1} \sigma(Y^{i,N,c}(t_n), Y^{i,N,c}(t_n- \tau)) \frac{(\Delta W^{i}_{n})^2- \delta}{2} \\
& \quad + \sigma(\overline{Y}^{i,N,f}(t_n-\tau), \overline{Y}^{i,N,f}(t_n-2\tau)) \partial_{x_2} \sigma(\overline{Y}^{i,N,f}(t_n), \overline{Y}^{i,N,f}(t_n- \tau))\frac{\Delta W^{i}_{n} \Delta W^{i}_{n- \tau}}{2} \\
& \quad - \sigma(Y^{i,N,c}(t_n-\tau), Y^{i,N,c}(t_n- 2\tau)) \partial_{x_2} \sigma(Y^{i,N,c}(t_n), Y^{i,N,c}(t_n- \tau)) \frac{\Delta W^{i}_{n} \Delta W^{i}_{n- \tau}}{2} \\
& \quad + \frac{1}{2}(N_{i,n,f} + N_{i,n,a} + M_{i,n,f} + M_{i,n,a})  + \tilde{B}_{i,n} + \tilde{N}_{i,n} + \tilde{M}_{i,n}|^2] \\
& \leq (1 + C \delta) \mathbb{E}[|\overline{Y}^{i,N,f}(t_{n}) - Y^{i,N,c}(t_{n})|^2] + C\delta \mathbb{E}[|\overline{Y}^{i,N,f}(t_{n}- \tau) - Y^{i,N,c}(t_{n}- \tau)|^2] + C\delta^3,
\end{align*}
where the above square was explicitly computed and each term was estimated, using either a martingale property or standard techniques in combination with the auxiliary Lemma \ref{lemma:antitheticLemma2}.

Iterating above recursion, yields
\begin{align*}
&\mathbb{E}[|\overline{Y}^{i,N,f}(t_{n+1}) - Y^{i,N,c}(t_{n+1})|^2] \1 \delta^2 + \delta \sum_{l=0}^{n}\mathbb{E}[|\overline{Y}^{i,N,f}(t_{l}- \tau) - Y^{i,N,c}(t_{l}- \tau)|^2].
\end{align*}
Considering above estimate on the interval $[0,\tau]$, allows us to deduce the claim as 
\begin{equation*}
\sum_{l=0}^{n}\mathbb{E}[|\overline{Y}^{i,N,f}(t_{l}- \tau) - Y^{i,N,c}(t_{l}- \tau)|^2]
\end{equation*}
vanishes on this subinterval. In a next step, we investigate the subinterval $[\tau, 2\tau]$. Here, we can employ the fact that $\mathbb{E}[|\overline{Y}^{i,N,f}(t_{l}- \tau) - Y^{i,N,c}(t_{l}- \tau)|^2] \1 \delta^2$, and hence the assertion follows in this case as well. An inductive argument can then be used up to the final time $T$. 
\end{proof}

\end{document}